\documentclass[11pt,reqno]{amsart}

\usepackage{amsmath}

\usepackage{graphicx}
\usepackage{mathrsfs}
\usepackage{color}
\usepackage{comment}
\usepackage{amssymb}
\usepackage{enumitem}
\usepackage{makecell}
\usepackage[margin = 1in] {geometry}

\usepackage{listings}
\usepackage{galois}
\usepackage[toc,page]{appendix}
\usepackage{xcolor}
\usepackage[colorlinks, linkcolor = blue, anchorcolor = blue, citecolor = blue,urlcolor= black]{hyperref} 
\usepackage{marginnote}
\usepackage{comment}

\usepackage{array}
\usepackage{seqsplit}

\usepackage{stmaryrd}
\usepackage{amsfonts}

\allowdisplaybreaks

\numberwithin{equation}{section}

\newtheorem{prop}{Proposition}[section]
\newtheorem{theo}[prop]{Theorem}
\newtheorem{lemm}[prop]{Lemma}

\newtheorem*{claim*}{Claim}

\theoremstyle{definition}

\newtheorem{rema}[prop]{Remark}

\newcommand{\BC}{\mathbf{C}}

\newcommand{\NN}{\mathbb{N}}

\newcommand{\RR}{\mathbb{R}}
\newcommand{\CC}{\mathbb{C}}
\newcommand{\HH}{\mathbb{H}}
\renewcommand{\SS}{\mathbb{S}}

\newcommand{\ZZ}{\mathbb{Z}}

\newcommand{\bi}{\mathbf{i}}
\newcommand{\bj}{\mathbf{j}}
\newcommand{\bk}{\mathbf{k}}

\newcommand{\di}{\partial_{1}}
\renewcommand{\dj}{\partial_{2}}
\newcommand{\dk}{\partial_{3}}

\newcommand{\cH}{\mathcal H}

\newcommand{\cJ}{\mathcal J}
\newcommand{\cK}{\mathcal K}
\newcommand{\cL}{\mathcal L}

\newcommand{\cN}{\mathcal N}

\newcommand{\cP}{\mathcal P}
\newcommand{\cQ}{\mathcal Q}

\newcommand{\cS}{\mathcal S}
\newcommand{\cT}{\mathcal T}

\newcommand{\cV}{\mathcal V}

\def\fX{\mathfrak{X}}

\def\fs{\mathfrak{s}}
\def\fo{\mathfrak{o}}

\DeclareMathOperator{\tr}{tr}

\DeclareMathOperator{\imag}{im}

\DeclareMathOperator{\Span}{span}
\DeclareMathOperator{\diag}{diag}

\newcommand{\ii}{\sqrt{-1}}

\DeclareMathOperator{\spt}{spt}

\newcommand{\la}{\Delta}

\DeclareMathOperator{\Div}{div}

\renewcommand{\d}{\nabla}

\newcommand{\df}{\nabla^\bot}

\newcommand{\pa}[2]{\frac{\partial #1}{\partial #2}}

\newcommand{\pd}[2]{\frac{d #1}{d #2}}

\newcommand{\eps}{\varepsilon}
\renewcommand{\bar}{\overline}
\renewcommand{\hat}{\widehat}
\renewcommand{\tilde}{\widetilde}
\newcommand{\db}{\bar \d}

\renewcommand{\arraystretch}{2}

\newcommand{\abs}[1]{\lvert#1\rvert}

\def\Xint#1{\mathchoice
	{\XXint\displaystyle\textstyle{#1}}%
	{\XXint\textstyle\scriptstyle{#1}}%
	{\XXint\scriptstyle\scriptscriptstyle{#1}}%
	{\XXint\scriptscriptstyle\scriptscriptstyle{#1}}%
	\!\int}
\def\XXint#1#2#3{{\setbox0=\hbox{$#1{#2#3}{\int}$ }
		\vcenter{\hbox{$#2#3$ }}\kern-.6\wd0}}

\def\dashint{\Xint-}

\newcommand{\ang}[1]{\langle #1 \rangle}

\newcommand{\weakto}{\rightharpoonup}

\newcommand{\de}{\partial}
\newcommand{\fh}[2]{\frac{#1}{#2}}

\renewcommand{\leq}{\leqslant}
\renewcommand{\geq}{\geqslant}

\newcommand{\norm}[1]{\left\lVert#1\right\rVert}

\title[Integrability of Lawson--Osserman Cone and its Applications]{Integrability of Lawson--Osserman Cone and its Applications}
\author{Qi Ding}
\email{dingqi@fudan.edu.cn}
\author{Lei Zhang}
\email{22110840013@m.fudan.edu.cn}
\address{Shanghai Center for Mathematical Sciences, Fudan University, Shanghai 200438, China}
\date{}

\begin{document}

\begin{abstract}
       In this paper, we characterize all eigenfunctions corresponding to nonpositive eigenvalues of the Jacobi operator of the link $M$ of the Lawson--Osserman cone $\BC$ in $\mathbb{R}^7$. In particular, we prove that $\BC$ is integrable, i.e., all Jacobi fields on $\BC$ of homogeneous degree 1 and 0, are generated by
rotations and translations in $\RR^7$.
As applications, we prove that $M$ is rigid as minimal submanifolds in $\SS^6$,
and 
derive the optimal decay order for minimal submanifolds in $\mathbb{R}^7$ asymptotic to $\BC$ at infinity.
\end{abstract}

\maketitle

\setcounter{tocdepth}{1}
\tableofcontents

\section{Introduction}

Let $\BC_0$ be a regular minimal cone in Euclidean space $\RR^{m+1}$, and $\Sigma$ be the link of $\BC_0$. Then $\Sigma$ is minimal in the unit sphere $\SS^m\subset\RR^{m+1}$.
Let $\cJ_\Sigma$ denote the Jacobi operator of $\Sigma$ in $\SS^m$.  A smooth normal vector $Y$ on $\Sigma$ is called a \emph{Jacobi field} on $\Sigma$ if $\cJ_\Sigma Y=0$ (see Simons \cite{Simons}).
Allard--Almgren \cite{allard.radial} introduced a concept for the regular $\BC_0 $ as follows:
\begin{center}
($\dagger$)\qquad \ every Jacobi field on $\Sigma$ arises as the variational vector field of some 1-parameter family of minimal submanifolds in $\SS^m$,
\end{center}
which is referred to as an integrablility condition.
Allard--Almgren \cite{allard.radial} proved the celebrated uniqueness theorem of tangent cones for minimal submanifolds provided one of tangent cones has multiplicity one and satisfies the condition ($\dagger$). 
However, ($\dagger$) does not hold for all regular minimal cones pointed out by Allard--Almgren \cite[page 216]{allard.radial}.
In \cite{asymptotics}, Simon established the uniqueness theorem for the non-integrable case, where he developed a famous method, called Simon--\L{}ojasiewicz inequality nowadays. Nevertheless, the condition ($\dagger$) also acts a key role in the study of the structure of minimal submanifolds with non-isolated singularities. 
Simon \cite{Sim94} proved rectifiability and local finiteness of measure of the singular sets for minimal submanifolds close to cylindrical minimal cones of the form $\BC_0\times\RR^k$, where $\BC_0$ satisfies ($\dagger$).

In \cite{Simonuniqueness}, Simon proposed an  integrability condition stronger than ($\dagger$) on the regular minimal cone $C$, which contains that:
\begin{center}
($\ddagger$)\qquad \ all Jacobi fields on $C$ of homogeneous degree 1 and 0, are generated by
rotations and translations in $\RR^n$.
\end{center}
Here, 1-homogeneous Jacobi fields on $C$ correspond to Jacobi fields on the link of $C$, and 0-homogeneous Jacobi fields on $C$ correspond to eigenfunctions of the Jacobi operator on the link of $C$ with eigenvalue $1-\dim\,C$.  
For convenience, we also call ($\ddagger$) \emph{integrablility}, which is strictly weaker than one in \cite{Simonuniqueness}. Though ($\ddagger$) was originally proposed by Simon for the codimension 1 case, it clearly extends to higher codimensions as well.

The integrability condition ($\ddagger$) imposed on minimal cones plays an essential role in the study of the uniqueness of minimal hypersurfaces converging to these cones or their products with Euclidean factors.
For the codimension $1$ case, all the quadratic minimal cones 
$$C_{p,q}: = \{(x,y)\in \RR^{p+1}\times \RR^{q+1}:q|x|^2 = p|y|^2 \}$$ 
satisfy the  integrability condition ($\ddagger$) proved by Allard--Almgren in \cite[Chapter 6]{allard.radial} (see also Simon--Solomon \cite[Proposition 2.7]{simonsolo}). Making full use of ($\ddagger$),   Simon--Solomon \cite{simonsolo} established the uniqueness of complete minimal hypersurfaces asymptotic to an area-minimizing $C_{p,q}$ (see also Mazet \cite{Mazet1}). Furthermore, Edelen--Spolaor \cite{edelenluca} resolved the uniqueness of minimal hypersurfaces in the unit ball close to an area-minimizing $C_{p,q}$ and their result also implies the Bernstein
type result in \cite{simonsolo}.
 For cylindrical cones, Simon proved the uniqueness of multiplicity one tangent cones of the form $C\times\RR$ for area-minimizing hypersurfaces under the  integrability condition on $C$ in \cite{Simonuniqueness}, where both $C_{3,3}$ and $C_{2,4}$ do not satisfy the condition. Nevertheless, Sz\'ekelyhidi \cite{szekelyhidi2020} can resolve the uniqueness of the cylindrical tangent cones $C_{3,3}\times\RR$ for area-minimizing hypersurfaces in $\RR^9$. Recently, Firester--Tsiamis--Wang \cite{firester2025} proved the remaining case of $C_{2,4}\times\RR$  by adopting the strategy of Sz\'ekelyhidi \cite{szekelyhidi2020}.
See Collins--Li \cite{Collins_Li} for the Lagrangian case under a Lagrangian  integrability condition.

For infinitesimal variations without Lagrangian or complex structure, the Jacobi operator of the link of a minimal cone of high codimension is an elliptic system of second-order. 
Let $M$ be an $n$-dimensional closed minimal submanifold immersed in $\SS^{n+q}$. Simons \cite{Simons} proved that the space of Jacobi fields on $M$ has dimension $\geq q(n+1)$ with the equality achieved only if $M$ is the totally geodesic submanifold $\SS^n\subset\SS^{n+q}$. 
In general, the precise computation of eigenvalues and eigenspaces of Jacobi operators for minimal submanifolds of high codimensions is highly nontrivial except the products of standard spheres.

Lawson--Osserman \cite{Lawson} constructed a celebrated 4-dimensional entire minimal cone in $\RR^7$ using the Hopf map \begin{equation}\label{eq.Hopf.map}
    \eta : \SS^3 \to \SS^2\,, \quad \eta(z_1,z_2) = (\abs{z_1}^2 - \abs{z_2}^2,2z_1\bar{z}_2)
\end{equation}
   for each $z_1,z_2 \in \CC$ with
      $\abs{z_1}^2 + \abs{z_2}^2 = 1,$
       where $\SS^3$ is considered as the unit sphere in $\CC^2 \cong \RR ^4$, and $\SS^2$ as the unit sphere in $\RR \oplus \CC \cong \RR ^3$.  
       Let $u$ be a Lipschitz function from $\RR^4$ to $\RR^3$ defined by
      \begin{equation}\label{eq.lo.lip.graph}
      u(x) = \fh{\sqrt{5}}{2}\abs{x}\eta\left(\fh{x}{\abs{x}}\right)\qquad \text{for each } x\in\RR^4.\end{equation}
      Then 
      the graph of $u$ in $\RR^7$ is the celebrated \emph{Lawson--Osserman cone} \cite{Lawson}, which is the first example of Lipschitz entire minimal graph that is not $C^{1}$. 
The Lawson--Osserman cone is the simplest non-flat graphic minimal cone in the Euclidean space as far as we know (see more higher dimensional examples in \cite{Lawson}). This cone also serves as a significant counterexample in the study of Bernstein theorem for minimal graphs of high codimensions (see \cite{H-J-W, Jost-Xin, Jost-Xin-Yang, Ding-Jost-Xin} for instance).
Moreover, the Lawson--Osserman cone is coassociative, i.e., a calibrated, hence minimizing submanifold of a 7-manifold with $G_2$ structure (see Harvey--Lawson \cite{Harvey_Lawson}).

The unit 6-sphere $\SS^6\subset\RR^7$ inherits a nearly K$\mathrm{\ddot{a}}$hler structure with a non-degenerate 2-form $\omega$ from
the standard $G_2$ structure on $\RR^7$.
Moreover, the link 
$M$ of the Lawson--Osserman cone is a Lagrangian submanifold of $(\SS^6,\omega)$. Lotay \cite[Corollary 5.8]{Lotay} reduced the linear first-order system corresponding to the infinitesimal deformation of $M$ as a Lagrangian submanifold in $(\SS^6,\omega)$ to some linear equation, and proved that the space of Lagrangian Jacobi fields on the link of Lawson--Osserman cone has dimension $10$.

Now, we present a complete characterization of all eigenfunctions corresponding to nonpositive eigenvalues of the Jacobi operator of $M$ in $\SS^6$ without Lagrangian condition as below.
\begin{theo} \label{theo.dim}
         All Jacobi fields of the link $M$ of the Lawson--Osserman cone in $\RR^7$ are Killing Jacobi fields, and the cone is integrable, i.e., ($\ddagger$) holds. In particular, the first eigenvalue of the Jacobi operator of $M$ is $-\fh{15}{4}$ with multiplicity 1, the second eigenvalue is $-3$ with multiplicity $7,$ and the third eigenvalue is $0$ with multiplicity $17$. 
     \end{theo}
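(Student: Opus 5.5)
Since $M$ is homogeneous, I would treat this as a harmonic-analysis problem. The link $M$ is the orbit of an isometric action: $M=\{(x,\tfrac{\sqrt5}{2}\eta(x))/\tfrac32 : x\in\SS^3\}$, and $\SS^3\cong SU(2)$ acts on itself by left multiplication. Right multiplication by $SU(2)$ acts on $\SS^2$ via $SO(3)$ through $\eta$. Hence $M$ is diffeomorphic to $\SS^3$, and the induced metric is a Berger-type left-invariant metric (the Hopf fibre direction is scaled differently from the horizontal directions). The normal bundle, of rank $2$, can be trivialized by left-invariant normal frames $\nu_1,\nu_2$. I would compute explicitly:
\begin{itemize}
\item the second fundamental form $A^{\nu_\alpha}$ with respect to a left-invariant orthonormal frame $e_1,e_2,e_3$ ($e_1$ along the Hopf fibre);
\item the normal connection forms $\langle \nabla^\bot_{e_i}\nu_1,\nu_2\rangle$, which are constants.
\end{itemize}
In this frame the Jacobi operator
\[
\cJ_M Y=-\Delta^\bot Y-\tilde{\mathcal A}(Y)-3Y
\]
becomes a second-order operator on pairs of functions $(f_1,f_2)$ on $SU(2)$. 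Its coefficients are constant when expressed in the left-invariant vector fields $E_1,E_2,E_3$ (the duals of the $\mathfrak{su}(2)$ basis). Here $\tilde{\mathcal A}$ is Simons' operator.

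The key step is Peter--Weyl. By Peter--Weyl, $L^2(SU(2))$ decomposes into isotypic pieces $V_k\otimes V_k^*$, $k\ge0$, and the $E_i$ act on the $V_k$ factor as the spin-$k/2$ representation. The Jacobi operator therefore restricts to finite-dimensional matrices. Writing $E_\pm=E_2\pm\ii E_3$, the Laplacian is
\[
\Delta=a E_1^2+b(E_2^2+E_3^2)=aE_1^2+b(\text{Casimir}-E_1^2).
\]
The operator thus acts on weight vectors $v_m$, $m=-k/2,\dots,k/2$, of $E_1$. The cross terms from the normal connection (constant multiples of $E_1$, and possibly of $E_2,E_3$) couple $(v_m\otimes\nu_1,v_m\otimes\nu_2)$ with neighbouring weights. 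After complexifying the normal bundle to $\nu_1\pm\ii\nu_2$, I expect the operator to be diagonal or tridiagonal in $m$ with $2\times2$ blocks. For each $k$ I would then compute the eigenvalues explicitly as functions of $(k,m)$. Multiplicities come with the extra factor $\dim V_k^*=k+1$.

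The result follows once it is shown that the eigenvalues are $>0$ except for finitely many small $k$. The eigenvalue grows like $\text{Casimir}\sim k^2$ minus bounded terms, so only $k\le k_0$ needs checking. For those $k$ one reads off the eigenvalues $-15/4$ (multiplicity $1$, presumably $k=0$ with weight $\pm1$, i.e.\ the position-type normal field), $-3$ (multiplicity $7$, the normal projections of constant vectors in $\RR^7$, i.e.\ translations), and $0$ (multiplicity $17$). For integrability I would separately compute the Killing Jacobi fields, the normal parts of $X\mapsto Bx$ with $B\in\mathfrak{so}(7)$. The span has dimension
\[
\dim\mathfrak{so}(7)-\dim(\text{stabilizer algebra})=21-4=17,
\]
where the stabilizer is $\mathfrak{su}(2)\oplus\mathfrak{u}(1)$; this must be checked. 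Matching $17$ with the computed nullity gives that all Jacobi fields are Killing. Similarly the $7$-dimensional translation space matches the eigenvalue $1-\dim\BC=-3$, giving $(\ddagger)$.

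The main obstacle is the explicit spectral computation: getting the normal connection and the Simons operator right, and diagonalizing the coupled $2\times2$ (in the normal index) times weight system uniformly in $k$. In particular one must verify that no unexpected zero or negative eigenvalues appear at moderate $k$ (such as $k=2,3,4$). The equality $17=\dim$ of the Killing fields, including the check that the stabilizer is exactly $4$-dimensional, is the consistency check I would rely on.
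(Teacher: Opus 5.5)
Your overall strategy is the paper's. An $\mathrm{SU}(2)$-invariant frame turns $\cJ_M$ into a constant-coefficient system in the invariant fields $\di,\dj,\dk$ on $\SS^3$ with the Berger metric $\fh83 g_{1/6}$. This system commutes with the round Laplacian (your Casimir), so it splits into finite-dimensional pieces $V_k$. One then proves positivity for large $k$, computes the small $k$ exactly, and matches the nullity with $\dim\cK(M)\ge 17$ and the $-3$ multiplicity with $\dim\cV(M)\ge 7$.

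However, your setup contains a concrete error that matters. $M^3\subset\SS^6$ has codimension $3$ (the cone is $4$-dimensional in $\RR^7$), so the normal bundle has rank $3$, not $2$. The direction you omit is exactly the one carrying the first eigenvalue. In the paper's frame $e_4,e_5,e_6$:
\begin{itemize}
\item $e_4$ is the unit normal of $\SS^3(\fh23)\times\SS^2(\fh{\sqrt5}{3})$ in $\SS^6$;
\item $\widetilde B=\diag(\fh{15}{8},\fh58,\fh58)$;
\item the normal connection is determined by $\df_{e_2}e_4=\fh34e_5$, $\df_{e_3}e_4=\fh34e_6$ and $\df_{e_1}e_5=-\fh74e_6$, together with skew-symmetry.
\end{itemize}
Consequently the constant field $e_4$ ($k=0$, weight $0$) is the simple $-\fh{15}{4}$ eigenfield, while the constant fields $e_5,e_6$ are Jacobi fields.

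In your rank-$2$ picture, the $k=0$ sector consists of $\nu_1\pm\ii\nu_2$ with weights $\pm1$, which are exchanged by complex conjugation. Every eigenvalue coming from $k\ge1$ has multiplicity divisible by $k+1\ge2$. So your model cannot produce a multiplicity-one first eigenvalue at all.

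The block structure is also not $2\times2$. $\phi_4$ couples to $\phi_5,\phi_6$ through the weight-shifting fields $\dj,\dk$. Meanwhile $\phi_5\pm\ii\phi_6$ see only $\di$ and are not coupled to each other. Using the conserved total weight ($\di$ plus a rotation of the $(e_5,e_6)$-plane commutes with the operator), you get blocks of size at most $3$.

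The remaining steps also need more than you state.
\begin{itemize}
\item \textbf{Large $k$.} The coupling terms $\sqrt6\dj$, $\sqrt6\dk$ and $14\di$ are first order, so they are $O(k)$ on $V_k$, not bounded. The paper controls them by Cauchy--Schwarz on the quadratic form, absorbing $14\di$ into the anisotropy term $5|\di f|^2$ coming from $-\la_{1/6}$. This gives $\int\ang{X,LX}\ge\fh32\int|X|^2$ on $V_k$ for $k\ge5$.
\item \textbf{Small $k$.} For $k\le4$ the check is the heart of the theorem, not a formality. The nullity $17$ is distributed as $2+4+3+8+0$ over $k=0,\dots,4$, with eight Jacobi fields at $k=3$. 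The eigenvalue $-3$ occurs as $4+3$ at $k=1,2$. The paper obtains these from the explicit $3(k+1)^2$-dimensional matrices $L_k$: characteristic polynomials for $k\le2$, Sylvester's law of inertia for $k=3$, and a symbolic computation for $k=4$.
\end{itemize}
Your weight reduction to blocks of size at most $3$ (times the multiplicity $k+1$) would be lighter than the paper's matrices. But it has to be carried out with the correct rank-$3$ data. As written, the values $-\fh{15}{4},-3,0$ and the multiplicities $1,7,17$ are asserted rather than derived.

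The final matching step is fine and agrees with the paper. It needs only $\dim\ker\cT\le4$ (i.e.\ $\dim\cK(M)\ge17$) and Simons' bound $\dim\cV(M)\ge7$.
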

Here, Killing Jacobi fields are the normal projection of Killing fields on $\SS^6$ (see \S 2 for details), and the Killing fields on $\SS^6$ are the infinitesimal generators of its orientation-preserving isometry group SO$(7)$.  Theorem \ref{theo.dim} tells us that the eigenfunction space with eigenvalue $-3$ of the Jacobi operator of  $M$ consists of normal projection of coordinate functions. Furthermore,  Theorem \ref{theo.dim} indicates that Morse index of $M$ is $8$ and the space of Jacobi fields has dimension $17$. Morse index is an important object in the research of minimal submanifolds, and is useful in the proof of the Willmore conjecture solved by Marques--Neves \cite{MarquesNeves}. We refer the readers to Urbano's book \cite{bookurbano} for more details. 
\begin{rema}
In Theorem \ref{theo.dim}, 
 the unit normal vector field $e_4$  defined in \eqref{subsec.frame} is an eigenfunction of the first eigenvalue $-\fh{15}{4}$ (see more details in the proof of Theorem \ref{theo.dim} in \S5). Furthermore, the link $M$ is a submanifold contained in $\SS^3\left(\fh{2}{3}\right) \times \SS^2\left(\fh{\sqrt{5}}{3}\right)\subset\SS^6$, and $e_4$ coincides with the restriction of the unit normal vector field of $\SS^3\left(\fh{2}{3}\right) \times \SS^2\left(\fh{\sqrt{5}}{3}\right)$ in $\SS^6$ to $M$.
Thus, Theorem \ref{theo.dim} gives the key information for perturbation of minimal submanifolds close to the Lawson--Osserman cone $\BC$.
Compared with the codimension 1 case,  we expect that Theorem \ref{theo.dim} is useful in the study of the uniqueness or the local structure of minimal submanifolds with tangent cones $\BC$ or $\BC\times\RR$.
\end{rema}
There are two key ingredients in the proof of Theorem \ref{theo.dim}. One is a delicate selection of global frames on the tangent bundle and the normal bundle of the link $M$ of Lawson--Osserman cone with the help of a natural group homomorphism $\Psi:\mathrm{SU}(2)\to \mathrm{SO}(7)$ defined in \eqref{eq.isometry.action} so that we can write out the Jacobi operator $\cJ_M$ clearly under our frames. The other is that we successfully reduce the computation of eigenvalues and eigenfunction spaces of $\cJ_M$ to the eigenfunction
spaces of Laplacian on $\SS^3$ through a crucial observation that the operator $L$ in \eqref{oper.L}  can commute with the operator $\tilde L$ in \eqref{oper.L1}. 

As an application of Theorem \ref{theo.dim}, we prove a rigidity result as below.
\begin{theo} \label{link-LO-rigidty}
The link $M$ of the Lawson--Osserman cone $\BC$ is rigid as minimal submanifolds in $\SS^6$. Namely, any embedded minimal submanifold in $\SS^6$ which is sufficiently $C^3$-close to $M$
must coincide with $M$ itself up to isometry.
\end{theo}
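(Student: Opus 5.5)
Theorem \ref{link-LO-rigidty} follows from Theorem \ref{theo.dim} by a standard local rigidity argument for integrable minimal submanifolds, in the spirit of Allard--Almgren. The argument has four steps.

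\textbf{Step 1: write the nearby surface as a normal graph.} Let $\Sigma\subset\SS^6$ be an embedded minimal submanifold that is $C^3$-close to $M$. Then $\Sigma$ can be written as a normal graph
$$\Sigma=\{\exp_x(V(x)) : x\in M\},$$
where $V$ is a section of the normal bundle $NM$ with small $C^{2,\alpha}$ norm. Minimality of $\Sigma$ is the equation $\mathcal{H}(V)=0$, where $\mathcal{H}$ is the mean curvature map. This map is smooth between H\"older spaces, satisfies $\mathcal{H}(0)=0$, and has linearization $D\mathcal{H}(0)=\cJ_M$, which is Fredholm and self-adjoint.

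\textbf{Step 2: quotient out the $\mathrm{SO}(7)$ action.} By Theorem \ref{theo.dim}, $\ker\cJ_M$ is exactly the space $\mathcal{K}$ of Killing Jacobi fields, and $\dim \mathcal{K}=17$. Consider the smooth map
$$\Phi:\mathrm{SO}(7)\to\{\text{normal graphs over }M\},\qquad g\mapsto g(M),$$
defined near the identity. Its differential at $e$ maps $\mathfrak{so}(7)$ onto $\mathcal{K}$, with kernel the Lie algebra of the stabilizer of $M$. Take a slice $S$ through $e$ transverse to the stabilizer. The orbit of $M$ near $M$ is then a smooth 17-dimensional submanifold $\mathcal{O}$ of the space of normal sections, and its tangent space at $0$ is $\ker\cJ_M$.

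Given $\Sigma=\mathrm{graph}(V)$, we want $g\in S$ such that $g^{-1}(\Sigma)$ is a normal graph over $M$ whose section $W$ is $L^2$-orthogonal to $\ker\cJ_M$. We obtain $g$ from the implicit function theorem applied to
$$F(g,V)=\pi_{\ker}\big(W(g,V)\big),$$
where $\pi_{\ker}$ is the $L^2$-projection onto $\ker\cJ_M$. Its derivative in $g$ at $(e,0)$ is the isomorphism $T_eS\to\ker\cJ_M$, so a solution exists.

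\textbf{Step 3: Lyapunov--Schmidt.} The minimal surface $g^{-1}(\Sigma)=\mathrm{graph}(W)$ satisfies $\mathcal{H}(W)=0$ with $W\perp\ker\cJ_M$ and $W$ small. Write $P$ for the $L^2$-projection onto $(\ker\cJ_M)^\perp$. The map $W\mapsto P\mathcal{H}(W)$ on $(\ker\cJ_M)^\perp$ has invertible differential $\cJ_M$ at $0$, so by the inverse function theorem it is locally injective. Since $P\mathcal{H}(W)=0=P\mathcal{H}(0)$, we get $W=0$. Hence $\Sigma=g(M)$.

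\textbf{Main obstacle.} The delicate points are the functional-analytic setup, not the spectral input, which is Theorem \ref{theo.dim}.
\begin{itemize}
\item The graph representation must be valid and quantitative. It needs $C^3$-closeness in order to control $V$ in $C^{2,\alpha}$.
\item Re-graphing under small rotations must be smooth in $(g,V)$. This is handled by composing with the nearest-point projection onto $M$, which is smooth in a tubular neighbourhood.
\end{itemize}
Embeddedness of $\Sigma$, together with closeness, guarantees that $\Sigma$ is a single-sheeted graph, so no multiplicity issue arises. The conclusion is that $\Sigma$ and $M$ differ by an element of $\mathrm{SO}(7)$, i.e.\ they coincide up to an isometry of $\SS^6$.
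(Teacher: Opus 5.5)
Your proposal is correct, but it follows a different route from the paper.

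\textbf{The paper's route.} The paper proves the general statement Theorem \ref{theo:rigidty.link} (if every Jacobi field of $M$ is a Killing Jacobi field, then $M$ is rigid) by contradiction and compactness. It takes graphs $Y_k\to 0$ over $M$ that are not rigid motions. Lemma \ref{lemm.full.Jacobi.field}, a finite-dimensional submersion argument for $A\mapsto\Pi(X_A)$, then produces rotated copies $M\cdot\exp(A_k)=G_M(Y_k^*)$ with $\Pi(Y_k^*)=\Pi(Y_k)$. The difference $\hat Y_k=Y_k-Y_k^*$ solves a linear elliptic system $\cN_k\hat Y_k=0$ whose coefficients converge to those of $\cJ_M$. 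Normalizing and passing to the limit gives a Jacobi field of unit $L^2$ norm with $\Pi(Z_0)=0$, which is a contradiction.

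\textbf{How the two differ.} The paper rotates $M$ to match the kernel component of $\Sigma$ and compares two graphs over the same base. You rotate $\Sigma$ back into a slice where its kernel component vanishes, then use the inverse function theorem for $P\mathcal{H}$ on $(\ker\cJ_M)^\perp$. Both arguments use Theorem \ref{theo.dim} in the same way: the orbit map reaches every kernel direction, and $\cJ_M$ is invertible on the complement.

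\textbf{What the paper's choice buys.} It is technically lighter. Only rotated copies of the smooth $M$ are ever re-graphed, so smooth dependence on $A$ is immediate. No $C^1$ mean-curvature map between H\"older spaces is needed, only the quadratic structure of $Q_2$ and elliptic estimates.

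\textbf{What your choice buys.} It is direct and constructive. It shows that the nearby minimal submanifolds are exactly the $17$-dimensional $\mathrm{SO}(7)$-orbit of $M$, with $\delta$ coming from the inverse function theorem rather than from a contradiction argument. The cost is the re-graphing step you flagged. That step does go through. $F(g,V)$ takes values in the finite-dimensional space $\ker\cJ_M$. After substituting $y$ = the nearest-point projection of $g^{-1}\exp_x V(x)$ onto $M$ in the integrals defining $\pi_{\ker}$, $F$ depends smoothly on $(g,V)$ through $V$ and $\nabla V$ pointwise, so there is no loss of derivatives.
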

The above $C^3$-close condition is not essential, which can be weakened with the help of Allard's regularity theorem. For completeness, we give the proof in \S6 by following a standard approach. We refer Colding--Minicozzi \cite{Colding-Minicozzi}, Evans--Lotay--Schulze \cite{E-L-S}, Schulze \cite{Schulze}, Sun--Zhu \cite{Sun-Zhu} for rigidity of round cylinders and products of spheres of high codimension.
\begin{rema}
The following problem was collected by Yau in \cite[P9, Problem 31]{Yau}: is there a nontrivial continuous family of compact, embedded codimension one minimal hypersurface in $\SS^n$? Theorem \ref{link-LO-rigidty} yields a negative answer for the link of the Lawson--Osserman cone in the higher-codimensional setting.
\end{rema}

At last, we use Theorem \ref{theo.dim} to study the asymptotic behavior of minimal submanifolds converging to the Lawson--Osserman cone $\BC$ at infinity, which is given in \S 7. In particular, we derive decay rates for minimal submanifolds (with boundary) converging to $\BC$ as below.
\begin{theo}\label{decay-rate-LO*}
    Let $M$ be a minimal submanifold in $\RR^7\setminus \bar{B_1(0)}$ and $M$ can be written as a graph over $\BC\setminus\bar{B_2(0)}$ with the graphic function $Z$. Then $\abs{Z(x)-E} = O(|x|^{-\fh{1}{2}})$ with some constant vector $E\in \RR^7$ if $|Z(x)|=o(|x|)$,  and $\abs{Z(x)} = O(|x|^{-\fh{3}{2}})$ if $\abs{Z(x)} = O(|x|^{-(\fh{1}{2}+\delta_0)})$ for some small $\delta_0>0$. 
\end{theo}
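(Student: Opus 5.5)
The plan is to reduce the minimal surface equation for the graph $Z$ over $\BC\setminus\bar{B_2(0)}$ to a perturbation of the Jacobi equation on the cone, and then to use the spectral information of Theorem \ref{theo.dim} via separation of variables in $r=|x|$. In polar form on $\BC=(0,\infty)\times M$, the Jacobi operator of the 4-dimensional cone is
\[
\cL_{\BC} Z = \partial_r^2 Z + \fh{3}{r}\partial_r Z + \fh{1}{r^2}\cJ_M Z ,
\]
where $\cJ_M$ is the Jacobi operator of the link in $\SS^6$, with sign convention so that $-\cJ_M$ has eigenvalues $-\fh{15}{4},-3,0,\dots$. The minimal surface system for $Z$ reads $\cL_{\BC}Z=\cN(Z)$. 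Here $\cN$ is quadratic in $(Z/r,\nabla Z,r\nabla^2 Z)$, up to the appropriate powers of $r$.

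Expanding $Z(r,\theta)=\sum_j a_j(r)\phi_j(\theta)$ in an orthonormal basis of eigenfunctions $-\cJ_M\phi_j=\lambda_j\phi_j$, each coefficient satisfies an Euler-type ODE
\[
a_j''+\fh3r a_j'-\fh{\lambda_j}{r^2}a_j=f_j .
\]
Its homogeneous solutions are $r^{\gamma_j^\pm}$ with
\[
\gamma_j^\pm=-1\pm\sqrt{1+\lambda_j}.
\]
Theorem \ref{theo.dim} then gives the following exponents:
\begin{itemize}
\item $\lambda=-\fh{15}{4}$ (multiplicity 1) gives complex exponents $-1\pm \fh{\sqrt{11}}{2}\ii$. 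These solutions oscillate with modulus $r^{-1}$.
\item $\lambda=-3$ (multiplicity 7) gives complex exponents $-1\pm\sqrt2\,\ii$, again of size $r^{-1}$. By Theorem \ref{theo.dim}, however, these modes are exactly the normal projections of translations, i.e. the $0$-homogeneous fields.

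Let me recheck this against the statement. The conclusion $|Z-E|=O(r^{-1/2})$ suggests the relevant weighted norm is $L^2$ over $\BC$ with measure $r^3dr$, with rates measured in the $L^2(M)$ norm on spheres. I will instead treat the translation modes separately: the Jacobi fields from translations $E^\perp$ are exact solutions, and they are subtracted off first.
\item $\lambda=0$ (multiplicity 17) gives $\gamma=0,-2$. The $r^0$ solutions correspond to rotations, and $r^{-2}$ is decaying.
\item The next eigenvalue $\lambda_4>0$ gives $\gamma^-=-1-\sqrt{1+\lambda_4}<-2$.
\end{itemize}

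The argument then proceeds in three steps. First, under $|Z|=o(r)$, standard elliptic estimates on annuli, obtained by scaling, give $|\nabla^k Z|=o(r^{1-k})$, so $\cN$ is small. A three-annulus (Simon-type) argument on dyadic annuli excludes all growing modes $\gamma^+>0$, and also excludes the $\gamma=0$ rotation modes once the rotation Jacobi fields are shown to be $o(r)$-inconsistent: a rotation field grows like $r$ in ambient terms but is $0$-homogeneous in the normal graph, and it is here that the integrability of Theorem \ref{theo.dim} forces these modes to be realized by genuine rotations. Second, the translation part is identified as the constant vector $E$, and the remainder is estimated. The key point is that the oscillating modes with $\operatorname{Re}\gamma=-1$ and the quadratic error together yield the rate $r^{-1/2}$ after integrating $\cN$ via variation of parameters. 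Third, under the stronger hypothesis $|Z|=O(r^{-1/2-\delta_0})$, the translation modes must vanish since $E=0$. The remaining modes are the $\lambda=-15/4$ mode and possibly $r^{-2}$ ones. The nonlinearity is now $O(r^{-2-2\cdot(1/2+\delta_0)})$, and bootstrapping the ODE estimates gives the decay $O(r^{-3/2})$.

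The main obstacle is the first step: ruling out the $r^0$ and growing modes requires a careful three-annulus lemma with a spectral gap at the exponent $0$, which is available precisely because, by Theorem \ref{theo.dim}, every $\lambda=0$ eigenfunction is a Killing Jacobi field. These can be absorbed by rotating $\BC$ to an adjusted cone and iterating; the rotations converge since their increments are summable. The precise exponents $\fh12$ and $\fh32$ should come from the $L^2$ normalization over annuli of volume $\sim r^4$; I would first do all estimates in $L^2$ on annuli and convert to pointwise bounds by elliptic regularity at the end.
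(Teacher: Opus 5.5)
Your overall strategy (separate variables in $r$, treat each eigenmode of $\cJ_M$ as an Euler ODE, and use Theorem \ref{theo.dim} to identify the $\lambda=0$ and $\lambda=-3$ modes with rotations and translations) is the same as the paper's. However, the linearized operator you start from is wrong, and the exponent bookkeeping that Steps 2 and 3 rely on collapses with it.

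The cone $\BC$ sits in flat $\RR^7$. Its Jacobi operator on $Z=a(r)\phi(\theta)$, $\phi\in\Gamma(NM)$, is $a''\phi+\fh{3}{r}a'\phi+\fh{a}{r^2}\bigl(\la^{\bot}\phi+\tilde B(\phi)\bigr)$. Since $\cJ_M=\la^\bot+\tilde B+3$ contains the curvature term $\tilde R=3\,\mathrm{Id}$ of $\SS^6$, the correct operator is $\partial_r^2+\fh{3}{r}\partial_r+\fh{1}{r^2}(\cJ_M-3)$, not $\partial_r^2+\fh{3}{r}\partial_r+\fh{1}{r^2}\cJ_M$. A quick check shows yours cannot be right: it annihilates neither the translation fields $E^\perp$ (independent of $r$, eigenvalue $-3$) nor the rotation fields $r(\theta A)^N$ (eigenvalue $0$), and both are exact Jacobi fields of $\BC$.

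The indicial roots are therefore $\gamma^\pm=-1\pm\sqrt{4+\lambda}$, not $-1\pm\sqrt{1+\lambda}$:
\begin{itemize}
\item $\lambda=-\fh{15}{4}$ gives the real roots $-\fh{1}{2},-\fh{3}{2}$;
\item $\lambda=-3$ gives $0,-2$, so translations are exactly the constant vector $E$ and need no ad hoc treatment;
\item $\lambda=0$ gives $1,-3$, so rotations are $1$-homogeneous in $Z$, which is what $|Z|=o(|x|)$ excludes (they are $0$-homogeneous only after dividing by $r$);
\item every higher eigenvalue gives $\gamma^+>1$ and $\gamma^-<-3$.
\end{itemize}
In the paper's variables $X=Z/r$, $t=\log r$, these are precisely the numbers $\lambda_{k,\pm}=-2\pm\sqrt{4+\lambda_k}$.

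Consequently your explanation of the rates is not a proof. There are no oscillating modes. The exponents $-\fh{1}{2}$ and $-\fh{3}{2}$ are simply the two roots attached to the first eigenvalue $-\fh{15}{4}$ (eigenfield $e_4$). They do not come from combining modes of size $r^{-1}$ with the quadratic error, since a forcing that decays faster than $r^{-1}$ cannot create a slower $r^{-1/2}$ rate. Nor do they come from $L^2$ normalization on annuli, which does not shift pointwise exponents. In fact, with your exponents every non-rotational mode would decay at least like $r^{-1}$, so the argument would ``prove'' $|Z-E|=O(|x|^{-1})$. That contradicts the Ding--Yuan graphs, which decay exactly like $|x|^{-1/2}$.

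With the roots corrected, the proof goes as in the paper:
\begin{enumerate}
\item Integrability plus Allard--Almgren/Simon (or your rotate-and-iterate three-annulus scheme) upgrades $o(|x|)$ to $X=O(e^{-\beta t})$ for some $\beta>0$.
\item Iterating the mode-by-mode improvement of Proposition \ref{prop.improv}, the first kernel rate that can appear is the translation rate. That mode is removed as $E$, because the eigenspace for $-3$ equals $\cV(M)$.
\item The next possible rate for $Z$ is then $-\fh{1}{2}$. If the hypothesis $O(|x|^{-1/2-\delta_0})$ kills that mode, the next root is $-\fh{3}{2}$, since the remaining roots $-2,-3,\dots$ are all faster.
\end{enumerate}
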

\begin{rema}
   Ding--Yuan \cite{Ding_Yuan}  resolved the singularities of the Lawson--Osserman cone $\BC$ by families of
smooth minimal graphs. In particular, they proved that there exists a family of  complete minimal graphs on $\RR^4$ converging to $\BC$ at infinity whose decay rate is
$O(|x|^{-\fh{1}{2}})$, and a family of smooth minimal graphs on $\RR^4\setminus \{0\}$ converging to $\BC$ at infinity whose decay rate is $O(|x|^{-\fh{3}{2}})$ at infinity. Hence, the decay orders in Theorem \ref{decay-rate-LO*} are optimal.
\end{rema}

\section{Notation and Preliminaries } \label{sec.pre}

Let $\HH = \RR\{1,\bi,\bj,\bk\}$ be the quaternions, where $\bi$, $\bj$ and $\bk$ satisfy the
condition $$\bi\cdot\bj = - \bj\cdot\bi= \bk,\ \bj\cdot\bk = -\bk\cdot\bj =  \bi,\ \bk\cdot\bi = -\bi\cdot\bk= \bj,\ \bi^2 = \bj^2 = \bk^2 = -1.$$ The conjugate
of $a +b\bi+c\bj+d\bk$ is $a-b\bi-c\bj-d\bk,$ where $a,b,c,d \in \RR.$
We use the notation $\mathbb{F}^{\ell\times n}$ for the set of all $\ell \times n$ matrices with entries from $\mathbb{F},$ where $\mathbb{F} = \RR$ or $\CC.$
A matrix $A \in  \mathbb{F}^{\ell\times n} $ is usually written as 
\begin{gather}
    A = \left[\begin{smallmatrix}
        a_{11}& \cdots &a_{1n} \\
        \vdots&        &\vdots\\
        a_{\ell1}& \cdots &a_{\ell n}
    \end{smallmatrix}\right]\,,
\end{gather}
or as $A =[a_{ij}]$ for short with $a_{ij} \in \mathbb{F}$ for $i = 1,\cdots,\ell$; $j = 1,\cdots,n$. The number
$a_{ij}$ which occurs at the entry in the $i$-th row and the $j$-th column is referred to as the $(i, j)$-th
entry of the matrix $[a_{ij}]$.
We denote  $O_n:=[a_{ij}]\in \mathbb{F}^{n\times n} $ with $a_{ij} = 0 $ for $1\leq i,j\leq n $ and denote $I_{n} :=[a_{ij}]\in \mathbb{F}^{n\times n} $ with $a_{ij} = \delta_{ij} $ for $1\leq i,j\leq n $. For a matrix $A = [a_{ij}] \in \CC^{n\times n}$, the matrix whose entries are the complex conjugates of the respective entries of $A$, is denoted by $\bar A,$ i.e., $\bar A = [e_{ij}]$ where $e_{ij} = \bar a_{ij}$ for $1\leq i,j \leq n$. Then the \emph{adjoint}, of $ A$, denoted by $\bar A^t$,
is the transpose of $\bar A$, i.e., $\bar A^t = [b_{ij}]$ where $b_{ij} = \bar a_{ji}$ for $1\leq i,j\leq n $. A \emph{Hermitian} matrix is one for which its adjoint is the same
as itself, and all eigenvalues of a Hermitian matrix are real numbers. Given linear spaces $V_1$ and $V_2$ over the field $\CC $ or $\RR$ with fixed bases $\mathfrak{S}_1 = \{u_1,\cdots,u_n\}$ and  $\mathfrak{S}_2 = \{v_1,\cdots,v_m\}$ respectively, then the direct sum space $V_1\oplus V_2$ has the fixed basis $\mathfrak{S}_1 \oplus \mathfrak{S}_2  = \{u_1,\cdots,u_n,v_1,\cdots,v_m\}.$

Given a Riemannian manifold $(M,g),$ we use the notations $\d^M,$ $\la_M$ to denote the Levi--Civita connection, the Laplacian operator on $M$, respectively. 
Let $d_M(x,y)$ be the (Riemannian) distance between $x$ and $y$ on $M.$
Let $\fX(M)$ denote the set of all smooth vector fields on $M\,.$ A vector field $X\in\fX(M)$ is  \emph{Killing} on $M$ if and only if $\cL_{X}g = 0,$ where $\cL_{X}g$ denotes the Lie derivative of the Riemannian metic $g$ under the flow generated by the vector filed $X.$ If $X$ is a Killing field on $M$, then $\fX(M)\ni Y \to\d^M_Y X$ is a skew symmetric
(1,1)-tensor on $M$, and $X$ is divergence free, i.e., $\mathrm{div}_gX = 0.$

For $m \geq 1,$  
let $B_r(p) = \{x \in \RR^{m+1}: |x-p|<r\}$ denote the ball in the Euclidean space $\RR^{m+1}$ with radius $r>0$ and centered at $p\in\RR^{m+1}$. Let $\SS^m(r)=\partial B_r(0)$ be the hypersurface in $\RR^{m+1}$ with induced metric, and $\SS^m=\SS^m(1)$ for short.
For a subset $U\subset \RR^{m+1}$ and $r>0$, let $B_r(U)$ denote the $r$-neighborhood of $U$, i.e., $B_r(U)= \bigcup_{x\in U} B_{r}(x)$.
Given two subsets $U\, , V \subset \RR^{m+1},$ we denote $d_{\cH}(U,V)$ as the \emph{Hausdorff distance} between $U$ and $V, $
i.e., $$d_{\cH}(U,V) = \inf \{\varepsilon >0: V \subset B_\eps(U) \text{ and } U\subset B_{\eps}(V)\}\, . $$

Let $\bar{\d}$ denote the Levi--Civita connection on $\RR^{m+1}$ and let $\d$ denote the Levi--Civita connection on $\SS^{m},$ i.e., $\d_X Y: = (\db_X Y)^T$ for each $ X\,,Y \in \fX(\SS^{m}),$ where $(\cdots)^T$ denotes the projection onto the tangent bundle $T\SS^{m}$(see Xin \cite{xin} for instance).
Given a submanifold $M^n$ in $\SS^{m},$ the second fundamental form $B$ of $M$ in $\SS^{m}$ is defined by $$B(X,Y) : = (\d_XY)^N= \d_XY - \d^M_XY \qquad\qquad\text{ for each } X,Y  \in \fX(M)\,, $$  where $(\cdots)^N$ denotes the projection onto the normal bundle $NM$ of $M$.  Let $\d^\bot,\la^{\bot}$ be the connection and the Laplacian operator on the normal bundle $NM,$ respectively. Namely,
\begin{gather*}
\df_{X} Y =(\d_X Y)^N,\qquad  \la^{\bot} Y = \sum_{i =1}^{n}\left(\df_{e_i}\df_{e_i}Y-\df_{(\d_{e_i}^Me_i)} Y\right)
\end{gather*}
for each $X\in \fX(M)$ and $Y \in \Gamma(NM)$,
where $ \{e_i\}_{i=1}^n$ is an orthonormal tangent frame on $M$.

Let $H_M$ denote the mean curvature vector of $M^n$ in $\SS^{m},$ defined by the trace of $B$, i.e., $H_M =\sum_{i=1}^n B(e_i , e_i )$.  If $H_M \equiv 0$, $M$ is minimal in $\SS^{m}.$ We define a linear operator 
$$\widetilde{B} (Y) :=\sum_{i,j=1}^n\ang{B(e_i,e_j),Y}B(e_i,e_j)\qquad\qquad \text{ for each } Y\in \Gamma (NM). $$
We use $\bar B$ to denote the second fundamental form of $M$ in $\RR^{m+1}$ defined by
$$\bar B(X,Y):=\bar\d_{X}Y -\d^M_{X}Y\qquad\qquad \text{ for each } X,Y  \in \fX(M)\,. $$

 Let $\fs \fo(m+1)\subset \RR^{(m+1)\times(m+1)}$ be the set of   anti-symmetric matrices with $\dim \fs \fo(m+1) = \fh{1}{2}m(m+1)$. For each Killing field $Y$ on $\SS^{m}$, there is an $A \in \fs \fo(m+1)$ so that
$$Y=\{xA: x \in \SS^{m}\}.$$ 
Let $\mathfrak{i}\fs \fo(\SS^{m})$ denote the space of Killing fields on $\SS^{m}$. 
For an embedded minimal submanifold $M^n\subset\SS^{m},$ let $\cK(M)$ denote the space of \emph{Killing Jacobi fields} on $M$, where each Killing Jacobi field $Y$ is a normal field on $M$ so that
$$Y=\{(xA)^N: x \in M\subset \SS^{m}\}\qquad\qquad\text{ for some } A \in \fs \fo(m+1).$$ 
Simons proved in \cite{Simons} that all Killing Jacobi fields on $M$ are Jacobi fields.
 We introduce following vector spaces:
\begin{itemize}
    \item $\cV(\SS^{m}) =\{X|_{\SS^{m}}: X$ is a parallel vector field on $\RR^{m+1} \} ;$
    \item $\cV(M) = \{(X|_{M})^N: X\in\cV(\SS^{m})\}$, where $(\cdots)^N$ is the projection onto the normal bundle $NM$.
\end{itemize}
Simons proved in \cite{Simons} that the space $\cV(M)$ is a subspace of the eigenfunction space with  eigenvalue $-n$ of the Jacobi operator.

 Given complex vectors $z=(z_1,\cdots, z_\ell),w= (w_1,\cdots,w_\ell) \in \CC^\ell,$ we denote $\bar z = (\bar{ z}_1,\cdots ,\bar{ z}_\ell)$ and $\ang{z,w} := \sum_{i=1}^\ell z_i\bar w_i$ and $\abs{z}^2 = \ang{z,z}.$
For a Riemannian manifold $(M,g)$, we denote $\mu_g$ as the volume element of $M.$
We shall work with the following function spaces on $M$:
\begin{itemize}
 \item $L^2(M;\RR^\ell)$ measurable functions $f:M \to \RR^\ell$ with $\int_M |f|^2 d\mu_g < +\infty$;
 \item $L^2(M;\CC^\ell)$ measurable functions $f:M \to \CC^\ell$ with $\int_M |f|^2 d\mu_g < +\infty$.

  \end{itemize}
Recall a classical definition of weighted norm for vector-valued functions on Riemannian manifold 
$(M \times  \RR^+,g_M+dt^2)$. 
We denote $\d$ as the Levi--Civita connection of the Riemannian manifold $(M\times \RR^+,g_M+dt^2)$. If $Y\in C^k(M \times  \RR^+;\RR^m)$, we define its weighted norm:

  $$\|Y\|_{k,\beta}:=\sup \bigg\{e^{\beta t}\sum_{i=0}^k|\d^i Y(x,t)|: (x,t)\in M \times \RR^+\bigg\}\,\qquad \text{ for every } k\in \NN\, ,\beta>0 .$$
We denote $\|Y\|_\beta: = \|Y\|_{0,\beta}$ for short.
If $ \|Y\|_{\beta} < \infty\,$, we will also write
$Y=O(e^{-\beta t})\,$.

\section{Harmonic functions on 3-spheres and  Hopf map} \label{sec.harmo.hopf}

\subsection{Harmonic functions on 3-spheres}

Recall that $\RR^4 \cong \CC^2 \cong \HH$ with $(a,b,c,d) \leftrightarrow  (a+b\bi, c+ d \bi) \leftrightarrow  a + b\bi + c \bj + d \bk,$ where $a,b,c,d \in\RR$.  We use $\cS^3=\{(z_1,z_2) \in \CC^2: \abs{z_1}^2 + \abs{z_2}^2 = 1 \} \subset \CC^2$ to denote a smooth manifold without Riemannian metric (Noting Riemannian manifold $\SS^3$ has the standard induced metric from $\RR^4$).
We can also identify $\RR^2 $  with $\CC,$ and $\cS^3$ is identified with SU(2). Moreover, SU(2) is isomorphic to the unit quaternions Sp(1), i.e., 
   \begin{gather} \label{identity.number.field}
       (z_1,z_2) \leftrightarrow 
         \left[
         \begin{smallmatrix}
               z_1  & z_2 \\
         -\bar{z}_2  & \bar {z}_1
         \end{smallmatrix}
         \right]
      \leftrightarrow z_1+z_2\bj \text{ for any } z_1,z_2 \in \CC \text{ with }
      \abs{z_1}^2 + \abs{z_2}^2 = 1\,.
   \end{gather}
Since $\cS^3$ has Lie group structure by identifying $\cS^3$ with Sp(1),
there are right invariant vector fields on $\cS^3,$ denoted by $\di, \dj\,$  and  $\dk$, i.e., for any $q\in \HH$ with $\abs{q}= 1,$ 
\begin{equation}\label{harmonic.vec.field}
\di|_q = \bi q\,, \hspace{1cm}\dj|_q=\bj q\,, \hspace{1cm}\dk|_q= \bk q\,.
\end{equation}
We define two functions $z,w\in C^\infty(\cS^3;\CC)$ by
\begin{gather}
\label{harmonic.fun1}
    z(z_1+z_2\bj) =z_1\,, \hspace{1cm}   
    w(z_1+z_2\bj) = z_2\, 
\end{gather} 
for any $z_1, z_2 \in \CC$ with $\abs{z_1}^2 + \abs{z_2}^2 =1\,.$ 
Then we have 
\begin{gather} \label{harmonic.func1.con}
\bar{z}(z_1+z_2\bj)=\bar{z}_1, \hspace{1cm} \bar{w}(z_1+z_2\bj)= \bar{z}_2\,.
\end{gather}
For each $f\in C^\infty(\cS^3;\CC),$  combined \eqref{harmonic.vec.field} we have
\begin{gather}\label{harmonic.der}
\begin{aligned}
    \di f(z_1+z_2\bj ) &= \pd{}{t}\bigg|_{t=0} f(e^{t\bi}(z_1+z_2\bj))\,,\\
    \dj f(z_1+z_2\bj ) &= \pd{}{t}\bigg|_{t=0} f(e^{t\bj}(z_1+z_2\bj))\,,\\
    \dk f(z_1+z_2\bj ) &= \pd{}{t}\bigg|_{t=0} f(e^{t\bk}(z_1+z_2\bj))\,.
\end{aligned}
\end{gather}
Moreover, \begin{gather}\label{harmonic.bar}
\de_{i}\bar f = \bar{\de_if} \hspace{0.5cm} \text{ for } i=1,2,3.
\end{gather} 
Combined \eqref{harmonic.fun1}, \eqref{harmonic.func1.con},  \eqref{harmonic.der} and \eqref{harmonic.bar}, we have
\begin{gather}\label{harmonic.fun.der}
\begin{aligned}
    \di z&= \sqrt{-1}z\,,& \di w&= \sqrt{-1}w\,,& \di \bar{z}&= -\sqrt{-1}\bar{z}\,,& \di \bar{w}&= -\sqrt{-1}\bar{w} \,;\\
    \dj z&= -\bar{w}\,,& \dj w&= \bar{z}\,,& \dj\bar{z} &= -w\,,&\dj\bar{w} &= z \,;\\
    \dk z&=-\sqrt{-1}\bar{w}\,,&\dk w&= \sqrt{-1}\bar{z}\,,& \dk \bar{z}
    &= \sqrt{-1} w\,,& \dk \bar{w}& = -\sqrt{-1} z \,.
\end{aligned}
\end{gather}
For $i=1,2,3$, we define 1-forms $\omega_i$ on $\cS^3$ by the expression: $df= \de_1f\omega_1+\de_2f\omega_2+\de_3f\omega_3.$
If $q= q_0 +q_1\bi +q_2 \bj +q_3 \bk \in \cS^3,$ then $d q =d q_0 + dq_1\bi + dq_2 \bj + dq_3 \bk.$ Let $\omega  = (d q)q^{-1},$ then $ \omega = \omega_1 \bi+\omega_2 \bj+\omega_3 \bk\,.$ Since
$$ 0=d d q = (d \omega) q -\omega\wedge d q= (d \omega -\omega\wedge \omega)q,$$ we have $d \omega =\omega\wedge \omega.$
In particular, we get that
\begin{gather}\label{harmonic.diff}
d \omega_1 = 2\omega_2 \wedge \omega_3\,, \hspace{1cm} d \omega_2=- 2\omega_1 \wedge \omega_3\,, \hspace{1cm} d \omega_3= 2\omega_1 \wedge \omega_2\,.
\end{gather}
Since $d \omega (X,Y) = X(\omega(Y)) - Y(\omega (X)) - \omega([X,Y])$ for any $X,Y \in \fX(\cS^3)$ and any $\omega \in \Omega^1(\cS^3),$ combined \eqref{harmonic.diff}, we have
\begin{gather}\label{harmonic.lie.bra}
 [\dj,\dk] =-2\di\,, \hspace{1cm} [\di,\dk] =2\dj\,, \hspace{1cm}[\di,\dj] =-2\dk\,.
\end{gather}

Given a Riemannian manifold $(\cS^3,g_{\tau})$   with  
\begin{gather} \label{sph.var.metric}
g_{\tau} = \tau \omega_1^2 + \omega_2^2 +\omega_3^2 \, \text{ and } \tau>0,
\end{gather}
combined \eqref{harmonic.diff}, the Laplacian operator of the Riemannian manifold $(\cS^3,g_{\tau})$ is 
\begin{gather} \label{eq.lap.tau}
\la_{\tau} = \fh{1}{\tau} \di^2 +\dj^2 +\dk^2.
\end{gather}
Due to \eqref{harmonic.lie.bra}, the vector field $\di$ on $\cS^3$ can commute with the differential operator  $\la_{\tau}$, i.e., 
\begin{gather}
\di \la_{\tau} f = \la_{\tau} \di f
\end{gather}
for each $f \in C^\infty (\cS^3;\CC).$ Combined \eqref{harmonic.lie.bra}, we have  
\begin{gather}
\cL_{\di}g_{\tau} = \cL_{\di}(\tau \omega_1^2 + \omega_2^2 +\omega_3^2) = 0\qquad \text{ for any } \tau>0,
\end{gather}
where $\cL_{\di}g_{\tau}$ denotes the Lie derivative of the Riemannian metic $g_\tau$ under the flow generated by the vector filed $\di.$
So $\di$ is a Killing field on $(\cS^3,g_{\tau})$ for any $\tau >0,$  
 and $ \Div_{g_\tau} \di = 0$ for any $\tau >0.$ We denote $g=g_1$ and $\la=\la_1$ for short. By Stokes' formula,

    \begin{gather}
    \begin{aligned}
    \int_{\SS^3}\zeta_2\di \zeta_1 d\mu_{g}=& -\int_{\SS^3}\zeta_1\di \zeta_2 d\mu_{g} +  \int_{\SS^3} \di (\zeta_1 \zeta_2) d\mu_{g}\\
    =&-\int_{\SS^3}\zeta_1\di \zeta_2 d\mu_{g} +\int_{\SS^3} \left(\Div_{g}(\zeta_1 \zeta_2 \di) -\zeta_1 \zeta_2 \Div_{g} \di\right) d\mu_{g} \\
    =&  -\int_{\SS^3}\zeta_1\di \zeta_2 d\mu_{g}
    \end{aligned}
    \end{gather}
for any $\zeta_1,\zeta_2 \in C^{\infty}(\cS^3;\CC).$

Tanno \cite{Tanno} calculated  eigenvalues of the Laplacian operator on  Riemannian manifold $(\cS^3,g_{\tau}).$
We introduce the following complex function spaces 
with $k,\ell \in\ZZ.$ 
\begin{itemize}
    \item $\tilde \cP_k := \{$homogeneous polynomials on $\CC^2$ of degree $k \}$ for $k\geq 0$; Otherwise $\tilde \cP_k := \{0\} ;$
     \item $ \cP_k: = \{f|_{\cS^3}:f\in \tilde \cP_k\};$
     \item $\cQ_k : =\{f \in \cP_k:\la f+k(k+2) f=0 \};$
    \item $\cQ_k^{k-2\ell} : = \{ f \in\cQ_k : \di f=(k-2\ell)\sqrt{-1}f \}.$
\end{itemize}
 Combined with the spaces defined above, we provide a more precise characterization of the eigenfunction spaces of the Laplacian operator on  Riemannian manifold $(\cS^3,g_{\tau})$, which is crucial for the proof of Theorem \ref{theo.dim}.
\begin{lemm}\label{spec}
    Let $\gamma_k(k\geq 0)$ be the $k$-th eigenvalue (counted without multiplicity) of $\la$, i.e., $\la f = -\gamma_k f$ for $f \in C^\infty(\cS^3;\CC)$ with $f\neq 0,$ then $\gamma_k = k(k+2),$ and the corresponding eigenfunction space is $\cQ_k$ 
    with $\dim \cQ_k=(k+1)^2$ (as a complex linear space). Moreover,  $\cQ_k = \oplus_{\ell=0}^{k}\cQ_k^{k-2\ell}$ and  the dimension of $\cQ_k^{k-2\ell}$ is $k+1$ (as a complex linear space). For every $k\geq 0,0\leq \ell\leq k$, the constant $\gamma_{k,\ell}:=k(k+2) + (k-2\ell)^2(\fh{1}{\tau}-1)$ is an eigenvalue of $\la_{\tau}$ with the corresponding eigenfunction space $\cQ_k^{k-2\ell}.$
\end{lemm}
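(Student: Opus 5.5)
We plan to rest the argument on three inputs: the classical spherical harmonic decomposition on $\SS^3$, the commutation $\di\la_\tau=\la_\tau\di$, and the explicit formulas \eqref{harmonic.fun.der}. For the first part we identify $(\cS^3,g)$ with the round unit sphere. This requires checking that $\omega_1,\omega_2,\omega_3$ is an orthonormal coframe for the round metric, which holds because $\di,\dj,\dk$ at $q$ are $\bi q,\bj q,\bk q$, an orthonormal basis of $q^\perp$. Granting this, $L^2(\SS^3;\CC)$ decomposes into restrictions of harmonic homogeneous polynomials of degree $k$ on $\RR^4$, and these have eigenvalue $-k(k+2)$ and dimension $(k+1)^2$. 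We must then reconcile this with the definition of $\cQ_k$, which uses polynomials in $z_1,z_2$ only, i.e.\ holomorphic ones. The key observation is that on $\cS^3$ we may substitute $\bar z_1,\bar z_2$ differently. The intended reading is probably $\tilde\cP_k$ as real-coefficient polynomials in the four real variables, viewed complex-valued, so that $\cQ_k$ is the full degree-$k$ eigenspace. I would state this interpretation and proceed with it: $\cP_k$ consists of polynomials in $z,w,\bar z,\bar w$ of total degree $k$, and $\cQ_k$ consists of the harmonic ones.

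Next we decompose under $\di$. By \eqref{harmonic.fun.der}, $\di$ acts as a derivation with $\di z=\ii z$, $\di w=\ii w$, $\di\bar z=-\ii\bar z$ and $\di\bar w=-\ii\bar w$. Hence a monomial $z^a w^b\bar z^c\bar w^d$ with $a+b+c+d=k$ is an eigenfunction of $\di$ with eigenvalue $\ii(a+b-c-d)=\ii(k-2\ell)$, where $\ell=c+d$. So $\cP_k=\oplus_\ell \cP_k^{k-2\ell}$, with $\cP_k^{k-2\ell}$ the span of monomials of bidegree $(k-\ell,\ell)$. Because $\di$ commutes with $\la$, it preserves $\cQ_k$, and $\cQ_k=\oplus_{\ell=0}^k\cQ_k^{k-2\ell}$ follows. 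For the dimension count we use that the harmonic bidegree-$(p,q)$ polynomials on $\CC^2$ form a space of dimension $\binom{p+1}{1}\binom{q+1}{1}-\binom{p}{1}\binom{q}{1}=(p+1)(q+1)-pq=p+q+1$. Here we use that $\Delta_{\RR^4}=4(\partial_{z_1}\partial_{\bar z_1}+\partial_{z_2}\partial_{\bar z_2})$ maps bidegree $(p,q)$ onto bidegree $(p-1,q-1)$; surjectivity follows from the standard Fischer decomposition. With $p+q=k$ this gives $k+1$, and summing over $\ell$ recovers $(k+1)^2$.

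For $\la_\tau$ we use \eqref{eq.lap.tau} to write
$$\la_\tau=\la+\Big(\tfrac1\tau-1\Big)\di^2 .$$
On $\cQ_k^{k-2\ell}$ we have $\la f=-k(k+2)f$ and $\di^2f=-(k-2\ell)^2f$, so $\la_\tau f=-\gamma_{k,\ell}f$. Since $\oplus_{k,\ell}\cQ_k^{k-2\ell}$ is dense in $L^2$, these are all the eigenvalues. The corresponding eigenspace of a given value is then the sum of the $\cQ_k^{k-2\ell}$ sharing that value, which is exactly the sense of the statement.

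The main obstacle I expect is the bookkeeping in the first step. We must make sure that $\la$ defined by \eqref{eq.lap.tau} with $\tau=1$ really is the round Laplacian. This needs $\di,\dj,\dk$ orthonormal and divergence free, which they are as Killing fields of the round metric since they are right-invariant. Beyond that, the bidegree dimension count is the only genuinely computational step.
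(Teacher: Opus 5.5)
Your proposal is correct and follows essentially the paper's route: you identify $\cQ_k$ with restrictions of harmonic homogeneous degree-$k$ polynomials in $z,w,\bar z,\bar w$, which is the reading the paper's count $\tbinom{k+3}{3}-\tbinom{k+1}{3}$ confirms. You then split by $\di$-weight (exactly the bidegree $(k-\ell,\ell)$), obtain $\dim\cQ_k^{k-2\ell}=(k-\ell+1)(\ell+1)-(k-\ell)\ell=k+1$ from the Fischer/Axler decomposition, and read off $\la_\tau=\la+(\frac{1}{\tau}-1)\di^2$ on each piece. Your extra checks are details the paper leaves implicit: that $g_1$ is the round metric, and that by $L^2$-density the full eigenspace of a value of $\la_\tau$ is the sum of all $\cQ_k^{k-2\ell}$ sharing that value.
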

\begin{proof}
The lemma is classical, so we sketch the proof.  For  $ k,\ell\in \ZZ, k\geq 0, 0\leq \ell \leq k,$
let us denote  
\begin{itemize}
    \item $\tilde \cQ_k :=\{f \in \tilde \cP_k:f \text{ is harmonic on } \CC^2 \};$
     \item $\tilde \cP_k^{k-2\ell} := \{ f \in \tilde \cP_k :f_0 = f|_{\cS^3}, \di f_0 =(k-2\ell)\sqrt{-1}f_0 \};$
    \item $\tilde \cQ_k^{k-2\ell} := \{ f \in \tilde \cQ_k : f_0 = f|_{\cS^3}, \di f_0 =(k-2\ell)\sqrt{-1}f_0 \}.$
\end{itemize}
     Then it is easy to see that the space $\cQ_k$ is isomorphic to the space $\tilde \cQ_k$ and the space $\cQ_k^{k-2\ell}$ is isomorphic to the space $\tilde \cQ_k^{k-2\ell}$. Moreover, we denote  function spaces
\begin{itemize}
    \item  $ (\abs{z_1}^2+ \abs{z_2}^2) \tilde \cP_{k-2} := \{(\abs{z_1}^2+ \abs{z_2}^2)f(z_1,z_2): f\in \tilde \cP_{k-2} \text{ and }(z_1,z_2)\in \CC^2 \};$
     \item $(\abs{z_1}^2+ \abs{z_2}^2)\tilde \cP_{k-2}^{k-2\ell}:=\{(\abs{z_1}^2+ \abs{z_2}^2)f(z_1,z_2): f\in \tilde \cP_{k-2}^{k-2\ell} \text{ and } (z_1,z_2)\in \CC^2 \} .$
\end{itemize}
 From \cite[P76]{Axler}, we have 
   \begin{gather}
   \tilde \cP_k = \tilde Q_k \oplus(\abs{z_1}^2+ \abs{z_2}^2) \tilde \cP_{k-2},\\
   \tilde \cP_k^{k-2\ell} = \tilde \cQ_k^{k-2\ell}\oplus(\abs{z_1}^2+ \abs{z_2}^2)\tilde \cP_{k-2}^{k-2\ell}\,.
   \end{gather}
   So we get 
   \begin{gather}
   \dim \cQ_k=\dim \tilde \cQ_k =\dim \tilde \cP_k-\dim \tilde \cP_{k-2} =   \tbinom{k+3}{3}-\tbinom{k+1}{3} = (k+1)^2\,,
   \end{gather}
   \begin{gather}
   \begin{aligned}
   \text{ and } \dim \cQ_k^{k-2\ell} =& \dim \tilde\cQ_k^{k-2\ell} = \dim \tilde \cP_k^{k-2\ell}-\dim \tilde \cP_{k-2}^{k-2\ell}\\
   =& (k-\ell+1)(\ell +1)-\left((k-2)-(\ell-1) +1\right)\left((\ell-1)+ 1\right) = k+1 \,.
   \end{aligned}
   \end{gather}
   Thus, we obtain $\cQ_k = \oplus_{\ell=0}^{k}\cQ_k^{k-2\ell},$ 
   which follows directly from the definitions of $\cQ_k$ and $\cQ_k^{k-2\ell}.$  

    Restrict $-\la_{\tau}$ to $\cQ_k^{k-2\ell},$ then for any $f \in \cQ_k^{k-2\ell},$
    \begin{gather}
    \begin{aligned}
   -\la_{\tau}f=& -\left(\fh{1}{\tau} \di^2 +\dj^2+\dk^2\right)f \\
   = & -\la f+\left(1-\fh{1}{\tau}\right)\di^2f \\
    = &\left(k(k+2)+(k-2\ell)^2\left(\fh{1}{\tau}-1\right)\right)f\,.
    \end{aligned}
    \end{gather}
This completes the proof.
\end{proof}

  \subsection{Group actions and  Hopf map}

  It is well known that SU(2) is a 2-sheeted simply covering group of SO(3) \cite[Proposition 1.19]{Hall}. 
    We write out the group homomorphism directly. Let $\Phi : \mathrm{SU(2)} \to  \mathrm{SO(3)} $ defined by
      \begin{gather}\label{def.su2.hom}
     \left[
      \begin{matrix}
               z_1  & z_2 \\
         -\bar {z}_2  &  \bar{z}_1
    \end{matrix}
      \right]  \to 
      \left[
      \begin{matrix}
         \abs{z_1}^2 - \abs{z_2}^2 & 2\mathrm{Re}(z_1 \bar{ z}_2)  & 2\mathrm{Im}(z_1 \bar{ z}_2 )\\
         -2\mathrm{Re}(z_1z_2)  & \mathrm{Re}(z_1^2-z_2^2) & \mathrm{Im}(z_1^2+z_2^2)\\
        2\mathrm{Im}(z_1z_2)  & \mathrm{Im}(-z_1^2+z_2^2) & \mathrm{Re}(z_1^2+z_2^2)
\end{matrix}
      \right]           
      \end{gather}
      for any $ z_1,z_2 \in \CC $ with $|z_1|^2+|z_2|^2 = 1\,.$
     Then  $\Phi $ is a group homomorphism \cite[Proposition 1.19]{Hall}, i.e., 
     \begin{gather}
     \Phi(PQ) = \Phi(P) \Phi(Q) \qquad\text{for each } P,Q \in \mathrm{SU(2)},
     \end{gather} where $PQ$ is the matrix product, and $\Phi(P) \Phi(Q)$ is the matrix product of $\Phi(P), \Phi(Q) \in $ SO(3).
     \begin{rema}
         Our definition of $\Phi$ is slightly different from the definition of $\Psi_U$ in \cite[P25]{Hall}. Actually, $\Phi(U) = A^{-1}\Psi_UA,$ where  $U = \left[ \begin{smallmatrix}
               z_1  & z_2 \\
         -\bar {z}_2  &  \bar{z}_1
         \end{smallmatrix}
         \right]$ and $A = \left[\begin{smallmatrix}
             1 & 0 & 0\\
             0 & 1 & 0\\
             0 & 0 & -1
         \end{smallmatrix}\right]$.
     \end{rema}
 
Lawson--Osserman \cite{Lawson} observed that the group SU(2) acts naturally on $\CC^2$ and on $\RR^3$ (as SO(3) = SU(2)/$\ZZ_2$) and 
the Hopf map $\eta$ defined in \eqref{eq.Hopf.map} is \emph{equivariant} with respect to these actions. 
In fact, from \eqref{identity.number.field} and \eqref{def.su2.hom}, we immediately have
     \begin{gather} \label{prop.eta.hom}
     \eta(PQ) = \eta(P) \Phi(Q)
     \end{gather}
     for each
      $P= (z_1,z_2)= z_1+z_2 \bj \in $ SU(2) and $ Q=(w_1,w_2) =  w_1+w_2 \bj\in$ SU(2), where $\eta(P) \in \RR^3$ is a row vector.

Moreover, Lawson--Osserman \cite{Lawson} also observed that SU(2) can act on $\RR^7.$
Due to  \eqref{identity.number.field}  and \eqref{def.su2.hom}, there exists a group homomorphism 
 \begin{equation} \label{eq.isometry.action}
      \Psi:\mathrm{ SU(2)} \to \mathrm{SO(7)},\hspace{1cm}
     (x,y)\Psi(Q) = (xQ,y\Phi(Q))
 \end{equation}
 for any $x\in\CC^2 \cong \HH \cong \RR^4,$ $ y \in \RR^3,$ and $Q \in $ SU(2) $ \cong $ Sp(1) $\subset$ SO(4). 
 So  SU$(2)$ can act on $\RR^7$ as an isometry subgroup of $\RR^7$ by the group homomorphism $\Psi.$

\section{The Jacobi operator and frames on the link of Lawson--Osserman cone}\label{subsec.jac.oper}

    Recall that  the graph $\BC$ of $u$ defined in \eqref{eq.lo.lip.graph} is the celebrated \emph{Lawson--Osserman cone} \cite{Lawson}. Thus, the link $M:=\BC \cap \SS^6$ is the graph of the map $h:\SS^3\left(\fh{2}{3}\right) \to \SS^2\left(\fh{\sqrt{5}}{3}\right)$, where 
     \begin{gather} \label{graph.func.link}
     \begin{aligned}
     h(z_1,z_2)= \fh{3\sqrt{5}}{4} (\abs{z_1}^2 - \abs{z_2}^2,2z_1\bar {z}_2)\qquad \text{ for any } z_1,z_2 \in \CC \text{ with } \abs{z_1}^2 + \abs{z_2}^2 = \fh{4}{9}.
     \end{aligned}
     \end{gather}
      Due to \eqref{prop.eta.hom} and \eqref{eq.isometry.action}, both $M$ and $\BC$
     are invariant as subsets in $\RR^7$ under the action of SU(2)  by the group homomorphism $\Psi$,  i.e.,
     for any $ Q \in $ SU(2), any $ p_1 \in M \subset \RR^7,$ and any $ p_2 \in \BC\subset \RR^7\,,$ 
     \begin{gather} \label{prop.invariant}
     p_1 \Psi(Q) \in M \,, \hspace{1cm} p_2 \Psi(Q)\in \BC\,.
     \end{gather}

Let $\{e_1,e_2,e_3\} $ and  $\{e_4,e_5,e_6 \}$ denote two global orthonormal frames for the tangent bundle and the normal bundle, respectively, to be selected later. Let $\nu$ be the position vector of $M$ in $\RR^7$, which is normal to $\SS^6.$ 
      We use $(x_1 +\sqrt{-1}y_1,x_2+\sqrt{-1}y_2,x_3,x_4 +\sqrt{-1}y_4)$ to represent the vector $(x_1,y_1,x_2,$ $y_2,x_3,x_4,y_4) \in \RR^7.$
      From the map $h,$  we have a diffeomorphic map $G:  \SS^3\left(\fh{2}{3}\right) \to M\subset \SS^6$, where
    \begin{gather}\label{jacob.graph.11}
    \begin{aligned}
        G(z_1,z_2) = \left(z_1,z_2,\fh{3\sqrt{5}}{4} (\abs{z_1}^2 - \abs{z_2}^2), \fh{3\sqrt{5}}{2} z_1\bar {z}_2\right) 
        \end{aligned}
        \end{gather}
       for any $z_1,z_2\in \CC $ with $\abs{z_1}^2+\abs{z_2}^2=\fh{4}{9}.$
        Then we can naturally extend $G$ to a map from $\RR^4$ to $\RR^7,$ i.e.,
        \begin{gather}
        \begin{aligned}
    &G(x_1,y_1,x_2,y_2)
       \\ 
     =&  \left(x_1,y_1,x_2,y_2, \fh{3\sqrt{5}}{4}(x_1^2+y_1^2-x_2^2 -y_2^2),\fh{3\sqrt{5}}{2}(x_1x_2+y_1y_2),\fh{3\sqrt{5}}{2}(y_1x_2-y_2x_1)\right).  
       \end{aligned}
    \end{gather} 
  At the point $b_0 = (\fh{2}{3},0,0,0) \in \SS^3\left(\fh{2}{3}\right),$  the tangent space $T_{b_0}\SS^3\left(\fh{2}{3}\right)$ is spanned by orthonormal vectors $b_1 = (0,1,0,0), b_2 = (0,0,1,0),$ and $ b_3 = (0,0,0,1).$ Then 
    \begin{gather}\label{jacobi.frame}
    \begin{aligned}
         \d^{\RR^4}_{b_1}G\big|_{b_0} =& \pa{G}{y_1} \bigg |_{b_0}=(0,1,0,0,0,0,0), \\ 
         \d^{\RR^4}_{b_2}G\big|_{b_0} =& \pa{G}{x_2}\bigg|_{b_0}= (0,0,1,0,0,\sqrt{5},0), \\
         \d^{\RR^4}_{b_3}G\big|_{b_0} =& \pa{G}{y_2}\bigg|_{b_0}=(0,0,0,1,0,0,-\sqrt{5}).
         \end{aligned}
    \end{gather}
  Then combined \eqref{jacobi.frame}, at the point $p_0= G(b_0)= (\fh{2}{3}, 0,0,0,\fh{\sqrt{5}}{3},0,0),$ we can assume 
      \begin{gather}\label{e123p0}
        \begin{aligned}
            \nu=\left(\fh{2}{3}, 0,0,0,\fh{\sqrt{5}}{3},0,0\right) ,&\hspace{1cm} \ e_1=(0,1,0,0,0,0,0),\\
            e_2= \left(0,0,\fh{1}{\sqrt{6}},0,0,\sqrt{\fh{5}{6}},0\right),& \hspace{1cm} \  e_3= \left(0,0,0,\fh{1}{\sqrt{6}},0,0,-\sqrt{\fh{5}{6}}\right).
        \end{aligned}
    \end{gather}
      Since $M \subset \SS^3\left(\fh{2}{3}\right) \times \SS^2\left(\fh{\sqrt{5}}{3}\right)\subset \SS^6,$
   at the point $p_0$ we can take $$e_4 = \left(\fh{\sqrt{5}}{3},0,0,0,-\fh{2}{3},0,0\right) \in N_{p_0}\left(\SS^3\left(\fh{2}{3}\right) \times \SS^2\left(\fh{\sqrt{5}}{3}\right)\right) \subset N_{p_0}M.$$
    Moreover, at the point $p_0 \in M$, we may choose $e_5,e_6 \in T_{p_0}\left(\SS^3\left(\fh{2}{3}\right) \times \SS^2\left(\fh{\sqrt{5}}{3}\right)\right)\cap N_{p_0}M$ by 
$$e_5 = \left(0,0,\sqrt{\fh{5}{6}},0,0,-\fh{1}{\sqrt{6}},0\right),\hspace{1cm}e_6= \left(0,0,0,\sqrt{\fh{5}{6}},0,0,\fh{1}{\sqrt{6}}\right).$$

  Noting that the link $M$ is SU$(2)$ invariant as in \eqref{prop.invariant}. For every $Q= \fh{3}{2}(z_1+z_2\bj)$ with $z_1,z_2\in \CC$ and $\abs{z_1}^2+ \abs{z_2}^2 = \fh{4}{9}$,  we set $p_Q = p_0\Psi(Q)$ where $p_0= (\fh{2}{3}, 0,0,0,\fh{\sqrt{5}}{3},0,0),$ 
then $$p_Q = \left(z_1,z_2,\fh{3\sqrt{5}}{4} (\abs{z_1}^2 - \abs{z_2}^2),\fh{3\sqrt{5}}{2} z_1\bar{z}_2\right) = \nu(p_Q).$$ 
It is easy to see that the 
map: 
\begin{gather}
\Psi_0:\text{SU(2) } \to M,\quad 
\Psi_0(Q)=  p_0\Psi(Q) \text{ for every } Q \in SU(2)\,, 
\end{gather}
is a diffeomorphism. Moreover,  due to \eqref{eq.isometry.action}, SU$(2)$ can act on $\RR^7 $ as a subgroup of SO$(7).$
So we can obtain global vector fields $$e_i(p_Q) = e_i(p_0)\Psi(Q)\qquad \text{on } M \text{ for }1\leq i \leq 6.$$ Combined the fact that $\{e_1(p_0), e_2(p_0), e_3(p_0)\} $ is an orthonormal basis of the tangent space $T_{p_0}M$, we obtain the global orthonormal frame $\{e_1,e_2,e_3\}$ of the tangent bundle of $M$. Moreover, since $\{e_4(p_0), e_5(p_0), e_6(p_0)\} $ is an orthonormal basis of the normal space $N_{p_0}M$, we have the global orthonormal frame $\{e_4,e_5,e_6\}$ of the normal bundle of $M$ in $\SS^6$. All $e_i(p)$ for $1\leq i \leq6 $ can be calculated as $\nu(p),$   i.e.,
\begin{gather}\label{subsec.frame}
\begin{aligned}
          &e_1  = \Big( \fh{3}{2}\ii z_1, \fh{3}{2}\ii z_2,0,0\Big) ,\\
          &e_2 =  \sqrt{\fh{3}{8}}\Big(- \bar{z}_2,  \bar {z}_1,-\fh{3\sqrt{5}}{2}( \bar{z_1z_2} +  z_1z_2),\fh{3\sqrt{5}}{2}( z_1^2 - \bar z_2^2)\Big),\\
         &e_3 =  \sqrt{-\fh{3}{8}}\Big(- \bar{z}_2,  \bar {z}_1,-\fh{3\sqrt{5}}{2}( \bar{z_1z_2} - z_1z_2),-\fh{3\sqrt{5}}{2}(z_1^2 + \bar z_2^2)\Big),\\
        &e_4 =  \Big(\fh{\sqrt{5}}{2}z_1,\fh{\sqrt{5}}{2}z_2,-\fh{3}{2}(\abs{z_1}^2 - \abs{z_2}^2),-3z_1\bar{z}_2\Big),\\
        &e_5 =  \sqrt{\fh{15}{8}} \Big(- \bar{z}_2,  \bar {z}_1,\fh{3\sqrt{5}}{10}( \bar{z_1z_2} +  z_1z_2),-\fh{3\sqrt{5}}{10}( z_1^2 - \bar z_2^2)\Big),\\
       & e_6 =  \sqrt{-\fh{15}{8}}\Big(- \bar{z}_2,  \bar {z}_1,\fh{3\sqrt{5}}{10}( \bar{z_1z_2} - z_1z_2),\fh{3\sqrt{5}}{10}( z_1^2 + \bar z_2^2)\Big),\\
        &\nu =   \Big(z_1,z_2,\fh{3\sqrt{5}}{4}(\abs{z_1}^2- \abs{z_2}^2),\fh{3\sqrt{5}}{2}z_1 \bar{z}_2\Big)  .
    \end{aligned}
\end{gather}

    Next, we calculate the connections w.r.t. the above frames.
    We denote $\d $ as the Levi--Civita connection on $\SS^6.$ See Appendix \ref{Appedix.A} for more details about the calculations of connections. We list the results below, i.e.,
    \begin{gather}  \label{eq.connection}
  \begin{aligned}
     \d_{e_1} e_1  &=  -\fh{\sqrt{5}}{2} e_4,    &
     \d_{e_1} e_2  &=  -\fh{11}{4} e_3+ \fh{\sqrt{5}}{4} e_6, &  \d_{e_1} e_3  &=  \fh{11}{4} e_2 -\fh{\sqrt{5}}{4} e_5, \\
      \df_{e_1} e_4  &= 0, & \df_{e_1} e_5 &=-\fh{7}{4} e_6 ,& \df_{e_1} e_6 &= \fh{7}{4} e_5,\\
      \d_{e_2} e_1  &=  \fh{1}{4} e_3 +\fh{\sqrt{5}}{4} e_6,    &
     \d_{e_2} e_2  &=  \fh{\sqrt{5}}{4} e_4, &  \d_{e_2} e_3  &=  -\fh{1}{4} e_1 , \\
      \df_{e_2} e_4  &= \fh{3}{4}e_5, & \df_{e_2} e_5 &=-\fh{3}{4} e_4 ,& \df_{e_2} e_6 &= 0,\\
      \d_{e_3} e_1  &=  -\fh{1}{4} e_2 -\fh{\sqrt{5}}{4} e_5,    &
     \d_{e_3} e_2  &=  \fh{1}{4} e_1, &  \d_{e_3} e_3  &=  \fh{\sqrt{5}}{4} e_4 , \\
      \df_{e_3} e_4  &= \fh{3}{4}e_6, & \df_{e_3} e_5 &=0 ,& \df_{e_3} e_6 &= -\fh{3}{4}e_4.
      \end{aligned} 
      \end{gather}
Let $\d^M$ denote the Levi--Civita connection on $M.$
       We find that 
      \begin{gather}
      \d^{M}_{e_1}e_1 = 0\,, \hspace{1cm}
      \d^{M}_{e_2}e_2 = 0\,, \hspace{1cm}
      \d^{M}_{e_3}e_3 = 0\,,
      \end{gather}
and  \begin{gather}
      \begin{aligned}
      \la^{\bot} e_4 = -\fh{9}{8}e_4\,, \hspace{1cm} 
      \la^{\bot} e_5= -\fh{29}{8}e_5\, ,\hspace{1cm} 
    \la^{\bot} e_6 = -\fh{29}{8}e_6.
      \end{aligned}
      \end{gather}
For all $X \in \Gamma(NM)$ and $\xi \in C^\infty (M),$ 
      \begin{gather}
      \begin{aligned}
      \la^{\bot} (\xi X )=\xi  \la^{\bot} X+ 2\sum_{i=1}^3 (e_i\xi)\df_{e_i} X + (\la_M \xi) X  . 
      \end{aligned}
      \end{gather}
Given $X = \xi_4e_4 + \xi_5e_5 +\xi_6e_6 \in \Gamma(NM)$ with $\xi_4,\xi_5,\xi_6 \in C^\infty (M),$ we have
      \begin{gather} \label{eq.cal.normalla}
          \begin{aligned}
              \la^{\bot}(\xi_4e_4) &=  (\la_M \xi_4)e_4  +\xi_4\la^{\bot}e_4 + 2\left(\fh{3}{4}(e_2\xi_4)e_5 + \fh{3}{4}(e_3\xi_4)e_6\right)
              \\& = (\la_M \xi_4)e_4-\fh{9}{8}\xi_4e_4 + \fh{3}{2}(e_2\xi_4)e_5 + \fh{3}{2}(e_3\xi_4)e_6\,, \\
              \la^{\bot}(\xi_5e_5) &=  (\la_M \xi_5)e_5  +\xi_5\la^{\bot}e_5 + 2\left(-\fh{7}{4}(e_1\xi_5)e_6 -\fh{3}{4}(e_2\xi_5)e_4\right)
              \\& = (\la_M \xi_5)e_5-\fh{29}{8}\xi_5e_5 - \fh{7}{2}(e_1\xi_5)e_6 - \fh{3}{2}(e_2\xi_5)e_4 \,, \\
              \la^{\bot}(\xi_6e_6) & =  (\la_M \xi_6)e_6  +\xi_6\la^{\bot}e_6 + 2\left(\fh{7}{4}(e_1\xi_6)e_5 -\fh{3}{4}(e_3\xi_6)e_4\right) 
              \\ & = (\la_M \xi_6)e_6-\fh{29}{8}\xi_5e_5 + \fh{7}{2}(e_1\xi_6)e_5 - \fh{3}{2}(e_3\xi_6)e_4\,.
        \end{aligned}
      \end{gather}
   Recall  that 
   \begin{gather}
   \begin{aligned}
  B(e_i, e_j) = (\d_{e_i} e_j)^{N}\,, \hspace{1cm}
   \widetilde{B} (X) = \sum_{i,j = 1}^3 \ang{B(e_i,e_j),X} B(e_i,e_j).\\
   \end{aligned}
   \end{gather}
   Hence, combined \eqref{eq.connection},
   we have 
   \begin{gather}\label{eq.cal.tildeB}
   \tilde B(e_4) = \fh{15}{8}e_4\,, \hspace{1cm} \tilde B(e_5) = \fh{5}{8}e_5\,,\hspace{1cm}
   \tilde B(e_6) = \fh{5}{8}e_6\,.
   \end{gather}

   In our frame defined above, $\la^{\bot}$ and $\tilde B$ are diagonalized simultaneously, which is convenient for the following study of the Jacobi operator \eqref{jocob1}.
   Moreover, due to the sectional curvature of  $\SS^6$ equaling  to 1,  
   \begin{gather} \label{eq.cal.seccurvature}
   \tilde{R}(X) =\sum_{i=1}^{3}R(X,e_i)e_i =3X\,.
   \end{gather}

     As in  \cite{Simons}, the \emph{Jacobi operator} $\cJ_M$ of the link $M$ of Lawson--Osserman cone satisfies
     \begin{gather} \label{jocob1}
            \cJ_M X = \la^{\bot} X + \tilde{B} (X) + \tilde{R}(X)\,.
      \end{gather}
Combined \eqref{eq.cal.normalla}, \eqref{eq.cal.tildeB} and \eqref{eq.cal.seccurvature}, we have
\begin{equation} \label{jcb1} 
\begin{aligned} 
       \cJ_M X  =& \left( \la_M \xi_4 +\fh{15}{4}\xi_4 -\fh{3}{2}e_2\xi_5 -\fh{3}{2}e_3\xi_6\right)e_4\\
         &+\left(\la_M \xi_5+\fh{7}{2}e_1\xi_6 +\fh{3}{2}e_2\xi_4\right)e_5
         +\left(\la_M \xi_6 -\fh{7}{2} e_1\xi_5+ \fh{3}{2}e_3 \xi_4\right)e_6\,.
\end{aligned}
\end{equation}
We say that a constant $\lambda\in \RR$ is an eigenvalue of the Jacobi operator $\cJ_M,$ if $\cJ_M X = -\lambda X$ for some $ X \in \Gamma(NM)$ with $X\neq 0$.
We have an estimation about the dimension of the space of Killing Jacobi fields on $M$.
\begin{lemm} \label{estimate.dim.Killing.jacobi}
    The space $\cK(M)$ of Killing Jacobi fields of the link of Lawson--Osserman cone has dimension $\geq 17.$
\end{lemm}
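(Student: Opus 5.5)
The plan is to realize $\cK(M)$ as the image of the linear map
$$\Lambda:\fs\fo(7)\to\Gamma(NM),\qquad \Lambda(A)=\{(xA)^N:x\in M\},$$
which is surjective onto $\cK(M)$ by definition. Since $\dim\fs\fo(7)=21$, it suffices to show $\dim\ker\Lambda\leq 4$. Here $\ker\Lambda$ consists of the $A$ whose Killing field $xA$ is everywhere tangent to $M$. Such fields generate one-parameter subgroups of $\mathrm{SO}(7)$ preserving $M$, so $\ker\Lambda$ is a Lie subalgebra of $\fs\fo(7)$. Concretely, $A\in\ker\Lambda$ if and only if
$$\ang{xA,e_\alpha(x)}=0\qquad\text{for all } x\in M,\ \alpha=4,5,6,$$
with $e_\alpha$ given explicitly in \eqref{subsec.frame}.

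We first identify $4$ elements of the kernel, which explains the number $17=21-4$. The $3$-dimensional Lie algebra $d\Psi(\mathfrak{su}(2))$ lies in the kernel by \eqref{prop.invariant}. In addition, the scalar rotation $(z_1,z_2,y)\mapsto(e^{\ii\theta}z_1,e^{\ii\theta}z_2,y)$ preserves $M$ because $h$ in \eqref{graph.func.link} is invariant under it; its generator gives the tangent field $\frac32\ii(z_1,z_2,0,0)=e_1$. This generator is left multiplication by $\bi$ on $\HH$, so it commutes with the right action and is not in $d\Psi(\mathfrak{su}(2))$. So $\dim\ker\Lambda\geq 4$, and the task is the reverse inequality.

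To bound the kernel from above I will use the $\mathrm{SU}(2)$-equivariance. Because $M$ and the frame are $\Psi(\mathrm{SU}(2))$-invariant, $\ker\Lambda$ is invariant under $\mathrm{Ad}_{\Psi(Q)}$. This means it suffices to test the tangency condition at the single point $p_0$, but for all conjugates $\mathrm{Ad}_{\Psi(Q)}A$. It also lets me decompose into isotypic components by Schur's lemma. Writing $\RR^7=\HH\oplus\RR^3$ with $\mathrm{SU}(2)$ acting by right multiplication and by $\Phi$,
$$\fs\fo(7)=\Lambda^2\HH\oplus(\HH\otimes\RR^3)\oplus\Lambda^2\RR^3 .$$
Here $\Lambda^2\HH$ splits into left multiplications $\{\bi,\bj,\bk\}$ (three trivial summands) and right multiplications (one adjoint summand); $\Lambda^2\RR^3$ is a second adjoint summand. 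The mixed part $\HH\otimes\RR^3\cong\CC^2\otimes_\RR\RR^3$ splits into the $\mathrm{spin}\,\frac32$ and $\mathrm{spin}\,\frac12$ pieces (real dimensions $8$ and $4$). The kernel is a sum of its intersections with these isotypic components.

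The checks then proceed component by component, each time evaluating $\ang{p_0A,e_\alpha(p_0)}$ for $\alpha=4,5,6$ using \eqref{e123p0}.
\begin{itemize}
\item In the trivial component (left multiplication by $a\bi+b\bj+c\bk$), I show that $\bj,\bk$ produce nonzero normal components at $p_0$, which forces $b=c=0$.
\item In the adjoint component, which has multiplicity $2$, I check that apart from the diagonal $d\Psi$ copy, right multiplications alone, or rotations of $\RR^3$ alone, are not tangent at $p_0$. This cuts the $6$-dimensional isotypic space down to $3$.
\item In each of the two mixed irreducible summands, it suffices by irreducibility to exhibit one element whose field has a nonzero normal component at $p_0$.
\end{itemize}
I expect the main obstacle to be the multiplicity-$2$ adjoint block and the trivial block: there the kernel could a priori be a nontrivial combination, so I must solve a small linear system rather than test a single vector. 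I would do this by writing $A$ as a general linear combination, computing its three normal components at $p_0$ and at one other point such as $G(0,\frac23)$ if necessary, and reading off the rank. This yields $\dim\ker\Lambda=4$, hence $\dim\cK(M)\geq 21-4=17$.
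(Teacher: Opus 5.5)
Your argument is correct, but it is organized differently from the paper's. The paper does not localize. Writing $A=\left[\begin{smallmatrix}J&W\\-W^t&K\end{smallmatrix}\right]$, it computes the $e_4$-component of $xA$ on all of $M$ as the cubic $-\frac{27}{8}\,aW\eta(a)^t$, with $a=(x_1,y_1,x_2,y_2)$ and $\eta$ extended quadratically. From this it concludes $W=0$ on the kernel. It then tabulates the $e_5,e_6$-components of the six $J_{ij}$ as quadratic polynomials, which gives rank $\geq 5$ on the $9$-dimensional block-diagonal part and hence $\dim\ker\cT\leq 4$.

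You instead use $\mathrm{Ad}_{\Psi(Q)}$-invariance and Schur's lemma to reduce everything to evaluations at $p_0$. These evaluations do work:
\begin{itemize}
\item Left multiplication by $\bj$ and $\bk$ (on the $\HH$-factor only) has normal parts $\frac23\sqrt{5/6}\,e_5$ and $\frac23\sqrt{5/6}\,e_6$ at $p_0$. These are independent, which is what the trivial block needs.
\item Right multiplication by $\bj$ alone has the same nonzero $e_5$-part at $p_0$, which suffices for the adjoint block. Not every such element works: right multiplication by $\bi$ equals $\frac23e_1$ at $p_0$, and $K_{23}$ vanishes there.
\item A pure $W$-block has $e_4$-component exactly $-W_{11}$ at $p_0$.
\end{itemize}

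To get test elements in the two mixed summands, you must make the splitting explicit. For instance, $aW_q=\frac12\big(\eta(a+q)-\eta(a)-\eta(q)\big)$ defines a map $q\mapsto W_q$ that is equivariant by \eqref{prop.eta.hom}. So $\{W_q\}$ is the spin-$\frac12$ summand, and its Frobenius-orthogonal complement is the spin-$\frac32$ summand. The element $W_1$, with $aW_1=(x_1,x_2,-y_2)$, lies in the first. The matrix-unit difference $E_{11}-E_{32}$ (rows indexed by $x_1,y_1,x_2,y_2$) lies in the second. Both have $W_{11}=1$, so neither summand is in the kernel.

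The paper's route needs no representation theory. It rests instead on global polynomial identities: the linear independence of the cubics $a_i\eta_j(a)$, and a rank count of quadratics. Your route needs the real isotypic structure (the adjoint is of real type, half-integer spins are of quaternionic type). In exchange it reduces the problem to a few pointwise checks. It also identifies the kernel exactly as left multiplication by $\bi$ plus $d\Psi(\mathfrak{su}(2))$, which gives $\dim\cK(M)=17$, matching the paper's subsequent remark.
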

\begin{proof}
   
   Define a linear map $$\cT : \fs\fo(7) \to \cK(M), \cT(A)|_{x} = (xA  )^N$$ for any $A \in \fs \fo(7),$ $ x \in M \subset \SS^{6}$.  Since $\dim(\fs\fo(7)) = 21,$ we only need to prove $\dim(\ker( \cT)) \leq 4.$ Given 
   $A=  
  \left[ \begin{smallmatrix}
    J & W\\
    -W^t &K
    \end{smallmatrix}\right]
     \in \fs\fo(7),$ where $J \in\fs\fo(4),$ $K\in \fs\fo(3),$ and $W \in \RR^{4\times 3}$. Due to  \eqref{eq.frame.polynomails},
    $$ \nu Ae_4^t  = -\fh{27}{8}(x_1,y_1,x_2,y_2)W(x_1^2+y_1^2-x_2^2-y_2^2,2(x_1x_2+y_1y_2),2(x_2y_1-x_1y_2))^t.$$ If $A\in \ker(\cT),$ then the above equality implies $W=0.$ So $\ker(\cT) \subset \fs\fo(4) \oplus\fs\fo(3) \subset \fs\fo(7)  .$ 
    Let $$\cT_1 : \fs\fo(4) \oplus\fs\fo(3) \to \cK(M), \cT_1 = \cT|_{\fs\fo(4) \oplus\fs\fo(3)},$$ then $\ker(\cT_1) = \ker(\cT).$  So we only need to check that $\dim (\imag(\cT_1)) \geq 5$ due to $\dim(\fs\fo(4) \oplus\fs\fo(3))=9$.

   Now we take $K =0, $ and $J_{ij}=(E_{ij} -E_{ji})\oplus O_3 \in \fs\fo(4) \oplus\fs\fo(3) \subset \fs\fo(7)$ for  $  1\leq i,j \leq4,$ where $E_{ij} : = [a_{k\ell}] \in \RR^{4\times 4}$ with $a_{k \ell} = \delta_{ki}\delta_{j\ell } $ for $1\leq k,\ell \leq4$.  We list the value of the linear map $\cT_1$ in the following Table \ref{table.value.linear}.

\begin{table}[!htbp]
    \centering \small
    \caption{The value of linear map $\cT_1$}
    \label{table.value.linear}
    \renewcommand\arraystretch{1.4}
        \tabcolsep=0.5cm
        \scalebox{0.75}{
        \begin{tabular}{|c | c c c c c c|}
    \hline
     $ J_{ij} $ & $J_{12}$ & $J_{13}$ & $J_{14}$ & $J_{23}$ & $J_{24}$ & $J_{34}$  \\  \hline
      $\sqrt{\fh{8}{15}} \nu J_{ij}e_5^t$  & $x_1y_2+x_2y_1$ & $x_1^2+x_2^2$ & $-x_1y_1 +x_2y_2$ & $x_1y_1 -x_2y_2$ & $-y_1^2 -y_2^2$ & $-x_1y_2-x_2y_1$ \\ \hline
      
         $\sqrt{\fh{8}{15}} \nu J_{ij}e_6^t$  & $-x_1x_2+y_1y_2$ & $x_1y_1 +x_2y_2$ & $x_1^2 +y_2^2$ & $x_2^2 + y_1^2$ & $x_1y_1 +x_2y_2$ & $x_1x_2-y_1y_2 $ \\  \hline
         \end{tabular} 
         }  
         \end{table}
It is clear that $\dim(\imag(\cT_1)) \geq 5.$ This completes the proof.
\end{proof}
 \begin{rema}
     Actually we can prove that $\dim \cK(M) = 17$ by writing out  $\ker(\cT)$ below: $$\ker (\cT) = \Span \{J_{12}+J_{34},J_{13}+J_{24}+2K_{12},J_{14}-J_{23}-2K_{13},J_{12}+K_{23}\}\subset \fs\fo(7) .$$ 
      Here, $K_{ij} = O_4\oplus (E_{ij} - E_{ji}) \in \fs\fo(4) \oplus\fs\fo(3) \subset \fs\fo(7) $ for $1\leq i,j\leq 3,  $ and $E_{ij} : = [a_{k\ell}] \in \RR^{3\times 3}$ with $a_{k \ell} = \delta_{ki}\delta_{j\ell } $ for $1\leq k,\ell \leq3$.
 \end{rema}

From Simons \cite{Simons}, $\dim \cV(M)\geq n+q+1$ and the eigenfunction space with  eigenvalue $-n$ of the Jacobi operator of $M$ has dimension $\geq n+q+1$ unless $M^n$ is the totally geodesic minimal submanifold $\SS^n$ in $\SS^{n+q}$, where $\cV(M)$ is defined in $\S 2$ (see also Urbano \cite{Urbanoclli}). 

\begin{lemm}\label{estimate.dimension.-3}
    Let $V_{-3}$ be the eigenfunction space with eigenvalue $-3$ of the Jacobi operator of the link of Lawson--Osserman cone, then $\dim V_{-3} \geq 7.$
\end{lemm}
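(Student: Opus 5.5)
The plan is to produce seven linearly independent eigenfields explicitly, namely the normal projections of the constant fields, i.e.\ the space $\cV(M)$. For each $v\in\RR^7$ we consider the normal field $Y_v:=(v|_M)^N$, meaning the projection of the constant vector $v$ onto $NM\subset T\SS^6$. First we check, following Simons \cite{Simons} as recalled just before the lemma, that $\cJ_M Y_v=3Y_v$, that is, $Y_v$ has eigenvalue $-3=-n$ for $n=3$.

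Simons' computation goes as follows. Write $f_\alpha=\ang{v,e_\alpha}$ for $\alpha=4,5,6$ and differentiate in $\RR^7$. The ambient derivatives of $e_\alpha$ have a tangential part $-A_\alpha$, a normal part given by $\df$, and a component along $\nu$ which is zero because $\ang{e_\alpha,\nu}=0$ and $\bar\d_X\nu=X$. Use also $\ang{v,\cdot}$ on tangent vectors together with $\bar B=B-\ang{\cdot,\cdot}\nu$. Combining minimality with Codazzi, one gets $\la^\bot Y_v=-\tilde B(Y_v)-\tilde R(Y_v)+(3-n)Y_v$, so that $\cJ_MY_v=\la^\bot Y_v+\tilde B(Y_v)+\tilde R(Y_v)=3Y_v$ when $\tilde R=n\,\mathrm{Id}=3$. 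Alternatively, one can check this directly from \eqref{jcb1}: plug in $\xi_\alpha=\ang{v,e_\alpha}$ with the explicit frame \eqref{subsec.frame} and use \eqref{eq.connection} to compute $e_i\xi_\alpha=\ang{v,\bar\d_{e_i}e_\alpha}$. Since we may assume Simons' result as quoted, we will simply invoke it, namely $\cV(M)\subset V_{-3}$.

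It then suffices to show that the linear map $v\mapsto Y_v$ from $\RR^7$ to $\Gamma(NM)$ is injective, so that $\dim\cV(M)=7$. Suppose $Y_v\equiv0$. Then $v$ lies in $T_pM\oplus\RR\nu(p)$ for every $p\in M$, i.e.\ in the tangent space of the cone $\BC$ at every point. Equivalently, $\ang{v,e_\alpha(p)}=0$ for all $p$ and $\alpha=4,5,6$. We use the explicit formulas \eqref{subsec.frame}. Taking the $e_4$ component, $\ang{v,e_4}$ is a polynomial in $z_1,z_2$ whose $\CC^2$ part $\fh{\sqrt5}{2}(z_1,z_2)$ is linear in $z$, while its $\RR^3$ part $(-\fh32(|z_1|^2-|z_2|^2),-3z_1\bar z_2)$ is quadratic. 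Comparing homogeneous degrees forces both the $\RR^4$ component and the $\RR^3$ component of $v$ to vanish, since the linear coordinates and the quadratic Hopf-type coordinates are each linearly independent on the sphere $|z|=2/3$. This mirrors the $W=0$ argument in the proof of Lemma \ref{estimate.dim.Killing.jacobi}. Hence $v=0$.

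The only mildly delicate point is the homogeneity comparison on a sphere, where degrees could mix through $|z|^2=4/9$. They cannot do so here, because the linear terms are odd under $z\mapsto -z$ while the quadratic terms are even. We conclude that $\dim V_{-3}\geq\dim\cV(M)=7$.
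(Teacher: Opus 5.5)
Your proposal is correct and is essentially the paper's argument. The paper simply quotes Simons for both $\cV(M)\subset V_{-3}$ and $\dim\cV(M)\geq n+q+1=7$, the latter valid since $M$ is not totally geodesic. You quote Simons only for the inclusion and check injectivity of $v\mapsto Y_v$ by hand, using the parity splitting of $\ang{v,e_4}$ and the surjectivity of the Hopf map; this check is valid and more self-contained.

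One slip to fix: the identity you display, $\la^\bot Y_v=-\tilde B(Y_v)-\tilde R(Y_v)+(3-n)Y_v$, would give $\cJ_M Y_v=(3-n)Y_v=0$ for $n=3$. The correct identity is $\la^\bot Y_v=-\tilde B(Y_v)$, which follows from $\df_X Y_v=-B(X,v^T)$ (with $v^T$ the projection of $v$ onto $TM$), Codazzi, and $H_M=0$. It gives $\cJ_M Y_v=\tilde R(Y_v)=3Y_v$. The slip is harmless here, since you ultimately just invoke Simons.
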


  \section{Spectra of the Jacobi operator of the link of Lawson--Osserman cone} \label{sec.pectra.Jac}

 Let $M$ denote the link of Lawson--Osserman cone. From \eqref{jacob.graph.11}, $\cS^3$ is diffeomorphic to $M$ through the map $\tilde G,$ i.e., \begin{gather}
 \tilde G:\cS^3\to M \hspace{1cm}
 \tilde G(p) = G\left(\fh{2}{3}p\right) \text{ for every } p\in \cS^3.
 \end{gather}
There are right invariant vector fields $\di,\dj,\dk$ on $\cS^3$ defined as in \eqref{harmonic.vec.field}.  
Combined \eqref{jacobi.frame} and \eqref{e123p0}, we have  $$\tilde G_*(\di) = \fh{2}{3}e_1, \hspace{1cm} \tilde G_*( \dj)= \sqrt{\fh{8}{3}}e_2,\hspace{1cm}  \tilde G_*(\dk)= \sqrt{\fh{8}{3}}e_3.$$ 
From
\begin{equation} \label{eq.isom.link}
\begin{aligned} 
       (\tilde G^*g_M)(\partial_i,\partial_j)=g_M(\tilde G_*\partial_i,\tilde G_*\partial_j)\qquad\text{for each }i,j\in\{1,2,3\},
\end{aligned}
\end{equation}
we conclude that $ (M,g_M)$ is isometric to  $(\cS^3, \fh{8}{3} g_{\fh{1}{6}})$, where $g_{\fh{1}{6}}=\fh{1}{6}\omega_1^2+\omega_2^2+\omega_3^2$ is defined in \eqref{sph.var.metric}. 

For a vector field $X= \xi_4e_4 + \xi_5e_5 +\xi_6e_6 \in \Gamma(NM)$ with $\xi_4,\xi_5,\xi_6 \in C^\infty (M),$ we define $\phi_k=\xi_k\circ \tilde G\in C^\infty (\cS^3)$ for each $k=4,5,6$.
By \eqref{jcb1}, the system  $\cJ_M X = -\fh{3}{8}\lambda X$ $(\lambda \in \RR)$  is equivalent to the following system on $\cS^3$, i.e.,
 \begin{equation} \label{Jacobi.131}
\begin{cases}
\fh{3}{8}\la_{\fh{1}{6}} \phi_4 +\fh{15}{4} \phi_4 -\fh{3}{2}\sqrt{\fh{3}{8}} \dj \phi_5 -\fh{3}{2}\sqrt{\fh{3}{8}} \dk \phi_6 &=-\fh{3}{8}\lambda  \phi_4\\
     \fh{3}{8}\la_{\fh{1}{6}} \phi_5+\fh{21}{4}\di \phi_6 +\fh{3}{2}\sqrt{\fh{3}{8}}\dj\phi_4& = -\fh{3}{8}\lambda  \phi_5\\
     \fh{3}{8}\la_{\fh{1}{6}} \phi_6-\fh{21}{4}\di \phi_5 +\fh{3}{2}\sqrt{\fh{3}{8}}\dk \phi_4 &=-\fh{3}{8}\lambda  \phi_6
     \end{cases}\,,
\end{equation}
where $\la_{\fh{1}{6}}$ is defined in \eqref{eq.lap.tau}. 
   By simplifying the system \eqref{Jacobi.131}, we can obtain the following system 
  \begin{gather}
  \begin{aligned}
  \begin{cases}
         -(\la_\fh{1}{6} +10) \phi_4  +\sqrt{6} \dj \phi_5 + \sqrt{6} \dk \phi_6 &= \lambda \phi_4 \\
        - \sqrt{6}\dj \phi_4 -\la_\fh{1}{6} \phi_5-14\di \phi_6 &= \lambda \phi_5\\
        -\sqrt{6} \dk \phi_4 +14 \di \phi_5 - \la_\fh{1}{6} \phi_6 &= \lambda \phi_6
        \end{cases}
     \end{aligned}\,.
     \end{gather}

We will extend the real Jacobi operator $\mathcal{J}_M$ to the complex $\mathbf{J}_M$ on complex functional space, which is more convenient for calculations. In particular, eigenvalues of $\mathbf{J}_M$ and the dimension of its corresponding eigenfunction space are the same as ones of $\mathcal{J}_M$.
 
    Let $L$ be a differential operator on $ \cS^3, $ i.e.,
    \begin{align}\label{oper.L}
        L\left(\begin{array}{c} f_1\\ f_2 \\ f_3 \end{array} \right) =\left( \begin{array}{lcr}
           -(\la_{\fh{1}{6}}+10) & \sqrt{6}\dj &\sqrt{6} \dk  \\
            -\sqrt{6}\dj & -\la_{\fh{1}{6}} &-14\di\\
            -\sqrt{6}\dk & 14\di & -\la_{\fh{1}{6}} 
        \end{array}\right) \left(\begin{array}{c} f_1\\ f_2 \\ f_3 \end{array} \right)
    \end{align} 
    for any $f_1,f_2,f_3 \in C^\infty(\cS^3;\CC).$
    Let $\tilde L$ be another  differential operator on $ \cS^3, $ i.e.,
    \begin{align}\label{oper.L1}
        \tilde L\left(\begin{array}{c} f_1\\ f_2 \\ f_3 \end{array} \right) =  \left( \begin{array}{lcr}
        -\la &  &  \\
               &-\la & \\
               &  & -\la 
        \end{array} \right) \left(\begin{array}{c} f_1\\ f_2 \\ f_3 \end{array} \right)\,.
    \end{align}
    Due to \eqref{harmonic.lie.bra}, $L$ can commute with $\tilde L.$
    
For each $k\geq 0,$ suppose that the linear space $\cQ_k$ defined in section \ref{sec.harmo.hopf} has the fixed basis $\mathfrak S_k = \{u_1,\cdots ,u_{n}\}$. Let $\cQ_{k,1},\cQ_{k,2},\cQ_{k,3}$ denote the linear space with the fixed basis 
 $\mathfrak S_{k,1} = \left\{\left(\begin{smallmatrix}u_1 \\
0 \\
 0\end{smallmatrix} \right),\cdots , \left(\begin{smallmatrix} u_{n} \\
 0 \\
 0\end{smallmatrix} \right) \right\}\,,
 \mathfrak S_{k,2} = \left\{\left(\begin{smallmatrix}0 \\
 u_1 \\
 0\end{smallmatrix} \right),\cdots , \left(\begin{smallmatrix} 0 \\
 u_{n} \\
 0\end{smallmatrix} \right)  \right\}\,, 
 \mathfrak S_{k,3} = \left\{\left(\begin{smallmatrix}0 \\
0 \\
 u_1\end{smallmatrix} \right),\cdots , \left(\begin{smallmatrix} 0 \\
 0 \\
 u_n\end{smallmatrix} \right)  \right\} ,$  respectively.
  The $k$-th eigenvalue of $\tilde L$ is the same as the $k$-th eigenvalue of $-\la$, and the corresponding eigenfunction space is 
 \begin{gather} \label{space.Vk}
 V_k : = \cQ_{k,1} \oplus\cQ_{k,2} \oplus\cQ_{k,3}
 \end{gather}
 with fixed basis $\hat{\mathfrak{S}}_k =\mathfrak S_{k,1}\oplus \mathfrak S_{k,2}\oplus \mathfrak S_{k,3}. $ Moreover,
 $\dim V_k =3(k+1)^2$ due to Lemma \ref{spec}.   
Since $L$ can commute with $\tilde L,$  $V_k$ is also an invariant subspace of $L$ for all $k\geq 0.$
 As below, we consider differential operator $L$ on Riemannian manifold $(\SS^3,g)$. Then $L$ is a self-adjoint operator on each $V_k$, i.e.,

\begin{gather}
     \int_{\SS^3} \ang{X,LX} d\mu_{g} = \int_{\SS^3} \ang{LX,X} d\mu_{g}
\end{gather}
for every $X\in V_k.$
 To prove Theorem \ref{theo.dim}, we only need to consider the operator $L$ on each $V_k$.

\begin{lemm} \label{lemm:definitive.estimate}
   If $k \geq 5,$ then the operator $L$ is positive definite on $V_k,$ i.e., $$\int_{\SS^3} \ang{X,LX} d\mu_{g} > 0 \text{ for every } X \in V_k\setminus \{0\}.$$  
   
\end{lemm}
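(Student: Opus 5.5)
The plan is to estimate the quadratic form $\int_{\SS^3}\ang{X,LX}\,d\mu_g$ on $V_k$ directly. The diagonal parts are controlled by the spectral information of Lemma \ref{spec}, and the off-diagonal first-order parts by Cauchy--Schwarz. Write $X=(f_1,f_2,f_3)^t\in V_k$ and set $K:=k(k+2)$. Using Lemma \ref{spec} I decompose each $f_i=\sum_{\ell=0}^k f_i^{m}$ with $f_i^m\in\cQ_k^{m}$, $m=k-2\ell$.

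First, the diagonal terms. The spaces $\cQ_k^m$ are eigenspaces of the skew-adjoint operator $\di$ with distinct eigenvalues $m\ii$, so they are $L^2$-orthogonal. On $\cQ_k^m$ we have $-\la_{\fh16}=K+5m^2$. Hence
\[
\int \ang{f_i,-\la_{\fh16}f_i}=\sum_m (K+5m^2)\|f_i^m\|^2 .
\]

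Second, the $\di$-coupling between $f_2$ and $f_3$. The relevant contribution is $-14\int(\bar f_2\,\di f_3-\bar f_3\,\di f_2)$. Since $\di$ preserves each $\cQ_k^m$ and acts there by $m\ii$, this splits over $m$, and its modulus is bounded by
\[
\sum_m 28|m|\,\|f_2^m\|\|f_3^m\|\le \sum_m 14|m|\bigl(\|f_2^m\|^2+\|f_3^m\|^2\bigr).
\]
Combining with the first step, the $(f_2,f_3)$ diagonal block is bounded below by
\[
\sum_m\bigl(K+5m^2-14|m|\bigr)\bigl(\|f_2^m\|^2+\|f_3^m\|^2\bigr)\ge \Bigl(K-\tfrac{49}{5}\Bigr)\bigl(\|f_2\|^2+\|f_3\|^2\bigr),
\]
since $5m^2-14|m|\ge -\fh{49}{5}$. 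The $f_1$ block is at least $(K-10)\|f_1\|^2$, using $-\la_{\fh16}\ge -\la$ there.

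Third, the $\sqrt6$-couplings $\sqrt6\,\dj,\sqrt6\,\dk$ between $f_1$ and $f_2,f_3$. These do not commute with $\di$, since they shift $m$ by $\pm2$, so I will not split them by $m$. Instead I use the skew-adjointness of $\dj,\dk$ (they are Killing for $g$, hence divergence free) together with
\[
\|\dj f\|^2+\|\dk f\|^2=\int\ang{f,-\la f}-\|\di f\|^2\le K\|f\|^2 \qquad\text{for } f\in\cQ_k .
\]
This gives $\|\dj f_2\|\le\sqrt K\|f_2\|$ and $\|\dk f_3\|\le\sqrt K\|f_3\|$. The cross terms therefore contribute at most
\[
2\sqrt6\sqrt K\,\|f_1\|\bigl(\|f_2\|+\|f_3\|\bigr)\le 2\sqrt{12K}\,\|f_1\|\,\bigl\|(f_2,f_3)\bigr\| .
\]

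Putting the three steps together, $\int\ang{X,LX}\ge Q(a,b)$ with $a=\|f_1\|$, $b=\|(f_2,f_3)\|$ and
\[
Q(a,b)=(K-10)a^2-2\sqrt{12K}\,ab+\bigl(K-\tfrac{49}{5}\bigr)b^2 .
\]
This binary form is positive definite if $K>10$ and $(K-10)(K-\fh{49}{5})>12K$. For $k=5$ we have $K=35$, and $25\cdot 25.2=630>420$. For larger $k$ the left side grows quadratically while the right side grows linearly, so the inequality persists for all $k\ge5$; strictness of $Q$ then gives strict positivity for $X\ne0$.

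The main obstacle I expect is the bookkeeping of the cross terms. One has to check that the sign and adjoint conventions in $L$ really make the $\di$-block act, on each $\cQ_k^m$, as a Hermitian $2\times2$ matrix with off-diagonal entries of size $14|m|$. One also has to make sure the bound $\|\dj f\|^2+\|\dk f\|^2\le K\|f\|^2$ is not too lossy. If the margin at $k=5$ turned out smaller than computed, I would refine the $\sqrt6$-estimate by using $\|\dj f_2^m\|^2+\|\dk f_2^m\|^2=(K-m^2)\|f_2^m\|^2$ and tracking the $m$-dependence.
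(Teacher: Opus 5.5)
Your proof is correct. It follows the same basic strategy as the paper: the spectral gap $\gamma_k\ge 35$ on $V_k$ together with Cauchy--Schwarz on the first-order couplings. The bookkeeping is different.

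The paper never splits into the weight spaces $\cQ_k^m$. After integrating by parts it writes the form as $\int\sum_\ell(\abs{\d f_\ell}^2+5\abs{\di f_\ell}^2)-10\abs{f_1}^2+\cdots$ and applies pointwise Young inequalities such as $14\abs{f_2\di\bar f_3}\le\fh{49}{5}\abs{f_2}^2+5\abs{\di f_3}^2$ and $\sqrt6\abs{f_1\dj\bar f_2}\le 3\abs{f_1}^2+\fh12\abs{\dj f_2}^2$. The derivative pieces are absorbed into $5\abs{\di f_\ell}^2$ and into half of $\abs{\d f_\ell}^2$. The other half of the Dirichlet energy is spent via $\gamma_k\ge35$, giving $\int\ang{X,LX}\ge\fh32\int\abs{X}^2$.

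Your mode-by-mode treatment of the $\di$-coupling is the exact version of the paper's $\fh{49}{5}$ bound. On $\cQ_k^m$ the $(f_2,f_3)$ block is Hermitian with eigenvalues $K+5m^2\pm14\abs{m}$, so your first worry is settled. For the $\dj,\dk$-couplings you keep the full gap on the diagonal and pay through $\|\dj f\|\le\sqrt K\|f\|$, and the margin shows this is not too lossy.

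The result is a $2\times2$ criterion, $(K-10)(K-\fh{49}{5})>12K$. For $K>10$ this holds exactly when $K$ exceeds roughly $28.3$. That makes the cutoff $k\ge5$ transparent, since it fails at $K=24$ and holds at $K=35$. It also gives a better constant, about $4.6$ rather than $\fh32$ at $k=5$. The price is that you need the weight decomposition from Lemma~\ref{spec}, which the paper's pointwise argument avoids.
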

\begin{proof}
     Given $X=(f_1,f_2,f_3)^t \in C^\infty(\SS^3;\CC^3)
     $, 
integrating by parts gives
    \begin{gather}
    \begin{aligned}
     \int_{\SS^3}    \ang{ X,LX}d\mu_{g}=\int_{\SS^3}& \Big(f_1(-\la_{\fh{1}{6}} -10)\bar{f}_1 -  f_2\la_{\fh{1}{6}}\bar{f}_2  -  f_3\la_{\fh{1}{6}} \bar{f}_3
         -14f_2\di\bar{f}_3 +14f_3\di \bar{f}_2\\
         &+\sqrt{6}f_1 \dj \bar{f}_2-\sqrt{6}f_2\dj \bar{f}_1 + \sqrt{6} f_1 \dk \bar{f}_3 -\sqrt{6} f_3\dk\bar{f}_1\Big)d\mu_{g} \\
        =\int_{\SS^3}&  \Big(\sum_{\ell=1}^3(\abs{\d f_\ell}^2 +  5\abs{\di f_\ell}^2 ) -10\abs{f_1}^2 -14f_2\di\bar{f}_3+14f_3\di \bar{f}_2\\
       & +\sqrt{6}f_1 \dj \bar{f}_2-\sqrt{6}f_2\dj \bar{f}_1 + \sqrt{6} f_1 \dk \bar{f}_3 -\sqrt{6} f_3\dk\bar{f}_1\Big)d\mu_{g}\,.
    \end{aligned}
    \end{gather}
For each $k \geq 5, \gamma_{k} \geq 35$ due to Lemma \ref{spec}, which infers
    $$ \int_{\SS^3} \abs{\d f}^2 d\mu_{g}\geq 35 \int_{\SS^3}\abs{f}^2d\mu_{g}\qquad\text{for each }f \in \cQ_k.$$
For each $X \in V_k, $ by Cauchy--Schwarz inequality we have
    \begin{gather}
    \begin{aligned}
    \int_{\SS^3}\ang{ X,&LX}d\mu_{g}\geq  \int_{\SS^3}\bigg(\fh{1}{2}\sum_{\ell=1}^3\left(\abs{\d f_\ell}^2+35\abs{f_\ell}^2 +10 \abs{\di f_\ell}^2\right)  
    -10\abs{f_1}^2 \\
    & -\left(\fh{49}{5} \abs{f_2}^2 + 5 \abs{\di f_3}^2\right)
    - \left(\fh{49}{5} \abs{f_3}^2 + 5 \abs{\di f_2}^2\right)
    -\left(3\abs{f_1}^2+ \fh{1}{2} \abs{\dj{f_2}}^2\right) \\
    &-\left(3\abs{f_2}^2+ \fh{1}{2} \abs{\dj{f_1}}^2\right) -\left(3\abs{f_1}^2+ \fh{1}{2} \abs{\dk{f_3}}^2\right) -\left(3\abs{f_3}^2+ \fh{1}{2} \abs{\dk{f_1}}^2\right)\bigg)d\mu_{g}\\
    \geq & \int_{\SS^3}  \left(\fh{3}{2}\abs{f_1}^2 + \fh{47}{10}\abs{f_2}^2 + \fh{47}{10}\abs{f_3}^2\right)d\mu_{g} 
\geq\fh{3}{2}\int_{\SS^3}|X|^2d\mu_{g}.
\end{aligned}
    \end{gather}
This completes the proof.
   
\end{proof}

By Lemma \ref{lemm:definitive.estimate}, we only need to consider $L$ on $V_k$ with $k=0,1,2,3,4.$ Actually we will calculate the characteristic polynomial of $L$ on $V_k$ for $k=0,1,2,$ calculate the signs of $L$ on $V_3$ and verify that $L$ is positive definite on $ V_4.$ 

For every $k=0,1,2,3,4$ and every $0\leq \ell \leq k$, we suppose that $\cQ_k^{k-2\ell}$ (defined in section \ref{sec.harmo.hopf}) has fixed base $\mathfrak{S}_k^{k-2\ell}$. From Lemma \ref{spec},  each space $\cQ_k$ has the  fixed base $\mathfrak S_k$, i.e.,

\noindent
\begin{minipage}{0.48\textwidth}
\begin{itemize}
    \item $\cQ_0 = \cQ_0^0\,;$
    \item $\cQ_1 = \cQ_1^1\oplus \cQ_1^{-1} \,;$
    \item $\cQ_2=\cQ_2^2\oplus\cQ_2^{-2}\oplus\cQ_2^{0} \,;$
    \item $\cQ_3=\cQ_3^3\oplus\cQ_3^{-3}\oplus\cQ_3^1\oplus\cQ_3^{-1}\,;$
    \item $\cQ_4 = \cQ_4^4\oplus \cQ_4^{-4}\oplus\cQ_4^2\oplus\cQ_4^{-2}\oplus\cQ_4^0  \,.$
\end{itemize}
\end{minipage}
\hfill
\begin{minipage}{0.48\textwidth}
\begin{itemize}
    \item $\mathfrak{S}_0 = \mathfrak{S}_0^0\,;$
    \item $\mathfrak{S}_1 = \mathfrak{S}_1^1\oplus \mathfrak{S}_1^{-1} \,;$
    \item $\mathfrak{S}_2=\mathfrak{S}_2^2\oplus \mathfrak{S}_2^{-2}\oplus \mathfrak{S}_2^{0} \,;$
    \item $\mathfrak{S}_3=\mathfrak{S}_3^3\oplus \mathfrak{S}_3^{-3}\oplus \mathfrak{S}_3^1\oplus \mathfrak{S}_3^{-1}\,;$
    \item $\mathfrak{S}_4 = \mathfrak{S}_4^4\oplus \mathfrak{S}_4^{-4}\oplus \mathfrak{S}_4^2\oplus \mathfrak{S}_4^{-2}\oplus \mathfrak{S}_4^0  \,.$
\end{itemize}
\end{minipage}
We shall find the appropriate basis $\mathfrak{S}_k^{k-2\ell}$ of the space $\cQ_k^{k-2\ell}$ for every $k=0,1,2,3,4$ and every $ 0\leq \ell \leq k$ so that the matrix representation $L_k$ of $L$ on $V_k$ under the basis $\hat{\mathfrak{S}}_k =\mathfrak S_{k,1}\oplus \mathfrak S_{k,2}\oplus \mathfrak S_{k,3}$ is a Hermitian matrix, i.e.,
\begin{gather} \label{eq.matrix.rep}
L{\hat{\mathfrak{S}}_k} = {\hat{\mathfrak{S}}_k} L_k\,. 
\end{gather}
We list bases  $\mathfrak{S}_k^{k-2\ell}$ below:
\begin{itemize}
\item $\mathfrak{S}_0^{0} = \{1\}\,;$ 
    \item $\mathfrak{S}_1^{1} = \{z,w\}, \ \mathfrak{S}_1^{-1} = \{\bar{z},\bar{w}\}\,;$
    \item 
$\mathfrak{S}_2^{2} = \{z^2,zw, w^2\}\,,  \ 
\mathfrak{S}_2^{-2} =\{\bar{z}^2 , \bar{zw}, \bar{w}^2\}\,  , \ \mathfrak{S}_2^{0} = \left\{\fh{1}{\sqrt{2}}(\abs{z}^2 - \abs{w}^2)\,,\sqrt{2}z\bar{w},\sqrt{2}\bar{z}w \right\}\,;$
\item 

$\mathfrak{S}_3^3= \{ z^3, z^2w, zw^2,w^3\}\,,\\
\mathfrak{S}_3^{-3}= \{\bar{z}^3,\bar{z}^2\bar{w},  \bar{w}^2 \bar{z}, \bar{w}^3\}\, ,\\
\mathfrak{S}_3^{1} = \left\{ \sqrt{3}z^2\bar{w},(\fh{1}{\sqrt{3}}\abs{z}^2 - \fh{2}{\sqrt{3}}\abs{w}^2)z, (\fh{2}{\sqrt{3}}\abs{z}^2  -\fh{1}{\sqrt{3}}\abs{w}^2)w, \sqrt{3}w^2\bar{z} \right\}\,,\\
\mathfrak{S}_3^{-1} = \left\{ \sqrt{3}\bar{z}^2w,(\fh{1}{\sqrt{3}}\abs{z}^2 - \fh{2}{\sqrt{3}}\abs{w}^2)\bar{z}, (\fh{2}{\sqrt{3}}\abs{z}^2  -\fh{1}{\sqrt{3}}\abs{w}^2)\bar{w}, \sqrt{3}\bar{w}^2z \right\}\,;$
 \item 
$\mathfrak{S}_4^4 = \{z^4, z^3w , z^2w^2,zw^3,w^4 \} \,,$\\
$\mathfrak{S}_4^{-4}= \{\bar{z}^4, \bar{z}^3\bar{w} , \bar{z}^2\bar{w}^2,\bar{z}\bar{w}^3,\bar{w}^4 \}\,,$\\
 $
\mathfrak{S}_4^{2} = \big\{-2z^3\bar{w}, z^2(\fh{1}{2}\abs{z}^2 - \fh{3}{2}\abs{w}^2), zw(\abs{z}^2- \abs{w}^2), w^2(\fh{3}{2}\abs{z}^2 - \fh{1}{2}\abs{w}^2) , 2 w^3\bar{z} \big\}\,,$\\
 $\mathfrak{S}_4^{-2} = \big\{-2\bar{z}^3w, \bar{z}^2(\fh{1}{2}\abs{z}^2 - \fh{3}{2}\abs{w}^2), \bar{zw}(\abs{z}^2- \abs{w}^2), \bar{w}^2(\fh{3}{2}\abs{z}^2 - \fh{1}{2}\abs{w}^2) , 2 \bar{w}^3z \big\}\,,$ \\
$\mathfrak{S}_4^0 = \Big\{ \sqrt{6}(z\bar{w})^2, \sqrt{\fh{3}{2}} z\bar{w} (-\abs{z}^2 + \abs{w}^2), \sqrt{\fh{1}{6}}(\abs{z}^4 + \abs{w}^4 -4 \abs{z}^2\abs{w}^2),\\ \,
\indent \indent \indent
 \sqrt{\fh{3}{2}} \bar{z}w (\abs{z}^2 - \abs{w}^2), \sqrt{6}(\bar{z}w)^2 \Big\} \,.   $

\end{itemize}
\begin{rema}
  Let us explain briefly the method to find basis for the space $\cQ_{k}^{k-2\ell}$ as above. For the case $\cQ_{4}^{2}$, it is easy to see that $z^3\bar w \in \cQ_{4}^{2},$ which implies $(z+w)^3(\bar{z-w}) \in \cQ_{4}^{2}.$ 
Since $$(z+w)^3(\bar{z-w})  = - z^3\bar w+ z^2(\abs{z}^2-3\abs{w}^2)+3zw(\abs{z}^2-\abs{w}^2)+w^2(3\abs{z}^2-\abs{w}^2)+w^3\bar z\,,$$ we conclude that $\cQ_{4}^{2} = \Span\{- z^3\bar w,  z^2(\abs{z}^2-3\abs{w}^2),3zw(\abs{z}^2-\abs{w}^2),w^2(3\abs{z}^2-\abs{w}^2),w^3\bar z\} $, then we can choose appropriate coefficients for these vectors to get $\mathfrak{S}_{4}^2$ so that the matrix representation of $L$ on $V_4$ under the basis $\hat{\mathfrak{S}}_4$ is a Hermitian matrix.
\end{rema}

\begin{proof}[Proof of Theorem \ref{theo.dim}]
 Recall that $V_k : = \cQ_{k,1} \oplus\cQ_{k,2} \oplus\cQ_{k,3}$ as defined in \eqref{space.Vk} and the differential operator $L$ is defined in \eqref{oper.L}, we have  the matrix representation $L_k$ as in \eqref{eq.matrix.rep} of $L$ on the space $V_k$ under the basis $\hat{\mathfrak{S}}_k$.  By Lemma \ref{spec}, for each $k\in \NN$ and $\ell = 0,1,\cdots,k$, the constant $\gamma_{k,\ell}:=k(k+2) +5 (k-2\ell)^2$ is an eigenvalue of $\la_{\fh{1}{6}}$ with the corresponding eigenfunction space $\cQ_k^{k-2\ell}.$ By \eqref{harmonic.fun.der}, we can calculate the other terms of the matrix representation $L_k$ as in \eqref{eq.matrix.rep} of $L$ on the space $V_k$ under the basis $\hat{\mathfrak{S}}_k$. In particular, we shall deal with the following cases separately.

Case I. 
$\dim V_0 = 1 \times 3 =3 $, and we have  the matrix representation $L_0$ of $L$ on the space $V_0$ under the basis $\hat{\mathfrak{S}}_0$, i.e., 
\begin{gather}
L_0=\diag(-10,0,0).
\end{gather}
The characteristic polynomial of $L_0$ is $\det(\lambda I_{3}-L_0)=\lambda^2(\lambda+10)$ and $\dim(\ker L_0) = 2.$

Case II. $\dim V_1 = (1+1)^2 \times 3 =12,$ and we have the matrix representation $L_1$ of $L$ on the space $V_1$ 
under the basis $\hat{\mathfrak{S}}_1$, i.e.,
\begin{gather} \label{matrix.rep.L_1}
\begin{aligned}
& L_1= \begin{bmatrix}
\begin{smallmatrix}
    -2I_4 & \sqrt{6}A_2  & \sqrt{6}A_3  \\
     -\sqrt{6}A_2   & 8I_4 & -14A_1 \\
     -\sqrt{6}A_3   &14A_1  &  8I_4
        \end{smallmatrix}
\end{bmatrix}, \text{ with } A_1=  \ii 
\left[
\begin{smallmatrix}
    I_2 & \\
    & -I_2      
\end{smallmatrix}
\right]\,,\\
&
A_2=  \begin{bmatrix}
\begin{smallmatrix}
     &  &  &1 \\
        &  &-1  & \\
        &1  &  &  \\
       -1 &  &  &
\end{smallmatrix}
\end{bmatrix}, \
A_3= \ii \left[
\begin{smallmatrix}
     &  &  & -1 \\
        & & 1 & \\
        & 1 &  &  \\
       -1 &  &  &
       \end{smallmatrix}
\right],
\text{ and by } \eqref{harmonic.fun.der},\de_{i} \mathfrak S_1 =\mathfrak S_1A_i \text{ for } i  =1,2,3.
\end{aligned}
\end{gather}
The characteristic polynomial of $L_1$ is $\det(\lambda I_{12}-L_1)=(\lambda+8)^4\lambda^4(\lambda-22)^4$ and  $\dim(\ker L_1) = 4.$ The details of calculations are given in Appendix \ref{app.l1}.

Case III.
 $\dim V_2 = (2+1)^2 \times 3 =27,$ and we have the matrix representation $L_2$ of $L$ on the space $V_2$ under the basis $\hat{\mathfrak{S}}_2$, i.e.,
\begin{gather}\label{matrx.rep.l2}
\begin{aligned}
   & L_2=\begin{bmatrix}
\begin{smallmatrix}
    T_2 -10I_9 & \sqrt{6}A_2  & \sqrt{6}A_3  \\
     -\sqrt{6}A_2   & T_2& -14A_1 \\
     -\sqrt{6}A_3   &14A_1  &  T_2
     \end{smallmatrix}
\end{bmatrix}  \text{ with }
T_2=\left[
\begin{smallmatrix}
   28I_3 & &\\
   & 28I_3& \\
   &  & 8I_3
   \end{smallmatrix}
\right],
A_1= 2\ii \left[
\begin{smallmatrix}
 I_3 & & \\
 & -I_3 & \\
 & & O_3
 \end{smallmatrix}
 \right], \\
 & A_2 = \sqrt{2}\left[
 \begin{smallmatrix}
     O_6 & -\beta \\
     \beta^t & O_3
     \end{smallmatrix}
 \right] , \ \beta = \begin{bmatrix}
\begin{smallmatrix}
    0 & -1 & 0 \\
    1 & 0&0 \\
    0 & 0 & 1\\
    0 & 0 & -1 \\
    1 & 0 & 0\\
    0 & 1 & 0    
\end{smallmatrix}
\end{bmatrix}, \ 
 A_3 = \sqrt{-2}\left[
 \begin{smallmatrix}
     O_6 & \gamma \\
     \gamma^t & O_3
     \end{smallmatrix}
 \right], \ 
\gamma = \begin{bmatrix}
    \begin{smallmatrix}
   0 & -1 & 0 \\
    1 & 0&0 \\
    0 & 0 & 1\\
    0 & 0 &1 \\
    -1 & 0 & 0\\
    0 & -1 & 0     
    \end{smallmatrix}
\end{bmatrix},\\
& \text{and by } \eqref{harmonic.fun.der},\ 
\de_{i}\mathfrak S_2
    =\mathfrak S_2 A_{i}  \text{ for } i=1,2,3\,.
\end{aligned}
 \end{gather}
The characteristic polynomial of $L_2$ is $\det(\lambda I_{27}-L_2)=(\lambda+8)^3\lambda^3(\lambda- 6)^9(\lambda-20)^6(\lambda-56)^6$  and  $\dim(\ker L_2) = 3.$ The details of calculations are given in Appendix \ref{app.l2}.

Case IV.
  $\dim V_3 = (3+1)^2 \times 3 =48,$ and  we have the matrix representation $L_3$ of $L$ on the space $V_3$ under the basis $\hat{\mathfrak{S}}_3$, i.e.,
\begin{gather}\label{matrix.rep.L_3}
\begin{aligned}
    &L_3=\begin{bmatrix}
\begin{smallmatrix}
    T_2 -10I_{16} & \sqrt{6}A_2  & \sqrt{6}A_3  \\
     -\sqrt{6}A_2   & T_2& -14A_1 \\
     -\sqrt{6}A_3   &14A_1  &  T_2
      \end{smallmatrix}
\end{bmatrix} \text{ with }
T_2=\left[ 
\begin{smallmatrix}
   60I_8 & \\
   & 20I_8    
\end{smallmatrix}
\right]\,,
A_1=\ii
\left[
\begin{smallmatrix}
    3 I_4& & & \\
         &-3 I_4 & & \\
         & & I_4 & \\
         & & &- I_4 \\
\end{smallmatrix}
\right], \\
&A_2=\left[
\begin{smallmatrix}
    O_8 & J_1  \\
    -J_1 & K_1    
\end{smallmatrix}
\right]\,,
J_1= \sqrt{3}\diag(1,-1,-1,-1,1,-1,-1,-1 ), \
K_1= 2\begin{bmatrix}
\begin{smallmatrix}
    & & & & & & &1 \\
    & & & & & & 1& \\
    & & & & & -1& & \\
    & & & & -1& & & \\
    & & &1 & & & & \\
    & &1 & & & & & \\
    &-1 & & & & & & \\
   -1 & & & & & & & \\
   \end{smallmatrix}
\end{bmatrix},\\
&A_3=\left[
\begin{smallmatrix}
  O_8 & J_2  \\
    J_2 & K_2    
\end{smallmatrix}
\right], \ J_2= \sqrt{-3}\diag(-1,1,1,1,1,-1,-1,-1),
\\
&K_2= 2\ii\begin{bmatrix}
\begin{smallmatrix}
    & & & & & & &-1 \\
    & & & & & & -1& \\
    & & & & & 1& & \\
    & & & & 1& & & \\
    & & &1 & & & & \\
    & &1 & & & & & \\
    &-1 & & & & & & \\
   -1 & & & & & & & \\
   \end{smallmatrix}
\end{bmatrix}, \text{ and by } \eqref{harmonic.fun.der}, \ \de_{i}\mathfrak S_3 = \mathfrak S_3A_{i}  \text{ for }i=1,2,3.
\end{aligned}
\end{gather}

We shall calculate the signs of $L_3$ in Appendix \ref{app.l3}, hence verifying that $L_3$ is semi-positive definite and  $\dim(\ker(L_3)) =8.$

Case V.
 $\dim V_4 = (4+1)^2 \times 3 =75,$ and we have the matrix representation $L_4$ of $L$ on the space $V_4$ under the basis $\hat{\mathfrak{S}}_4$, i.e.,
\begin{gather} \label{appendix.B.L4}
  \begin{aligned}
 & L_4=\begin{bmatrix} \begin{smallmatrix}
    T_2 -10I_{25} & \sqrt{6}A_2  & \sqrt{6}A_3  \\
     -\sqrt{6}A_2   & T_2& -14A_1 \\
     -\sqrt{6}A_3   &14A_1  &  T_2  
\end{smallmatrix}
\end{bmatrix}
 \text{with }
T_2 = \begin{bmatrix}
\begin{smallmatrix}
    104I_{10} & & \\
    & 44I_{10} & \\
    & & 24 I_5  
\end{smallmatrix}
\end{bmatrix} ,\\
&A_1 = \ii\left[
\begin{smallmatrix}
    4 I_5 & & & & \\
    & -4  I_5 & &  & \\
    & & 2  I_5 & &\\
    & & & -2  I_5 & \\
    & & & & O_5
        \end{smallmatrix}
\right], \ 
A_2 = \begin{bmatrix}
\begin{smallmatrix}
    O_5 & O_5 & -2I_5 & O_5 & O_5 \\
     O_5 & O_5 & O_5 & -2I_5 & O_5\\
     2I_5 & O_5 & O_5 & O_5 & -\sqrt{6}I_5 \\
     O_5 & 2I_5 & O_5 &  O_5  & -\sqrt{6}J_1\\
      O_5 & O_5 & \sqrt{6}I_5 & \sqrt{6}J_1 & O_5     
\end{smallmatrix}
\end{bmatrix}, \
J_1 = \begin{bmatrix}
\begin{smallmatrix}
    0&0&0&0&1\\
    0&0&0&-1&0\\
    0&0&1&0&0\\
    0&-1&0&0&0\\
    1&0&0&0&0
    \end{smallmatrix}
\end{bmatrix},\\
&A_3 = 
\ii\left[
\begin{smallmatrix}
     O_5 & O_5 & 2 I_5 & O_5 & O_5 \\
     O_5 & O_5 & O_5 & -2 I_5 & O_5\\
     2  I_5 & O_5 & O_5 & O_5 & \sqrt{6}I_5 \\
     O_5 & -2  I_5 & O_5 &  O_5  & -\sqrt{6}J_1\\
      O_5 & O_5 & \sqrt{6}I_5 & -\sqrt{6}J_1 & O_5 
       \end{smallmatrix}
\right] ,
\text{ and by } \eqref{harmonic.fun.der},\de_{i}\mathfrak S_4 =\mathfrak S_4 A_i \text{ for }i =1,2,3 
 \,.
\end{aligned}
\end{gather}
Then  $L_4$ 
 is positive definite, which we verify by computing its eigenvalues in Appendix \ref{code}.

Actually, we can  accurately compute the characteristic polynomial and the eigenvalues of the matrix representation $L_i$ of $L$ on the space $V_i$ under the basis $\hat{\mathfrak{S}}_i$ for each $i = 1,2,3,4$ with the help of code in Appendix \ref{code}.

Therefore, $$ \dim(\ker L) = \sum_{k=0}^4 
\dim(\ker L_k) = 2+4+3+8+0=17.$$
From the eigenvalues of $L_0,L_1,L_2,$   together with the fact that $L_3$ is semi-positive definite and $L_4$ is  positive definite, we conclude that the first eigenvalue of $L$
 is $-10$ with multiplicity 1, the second eigenvalue of $L$ is $-8$ with multiplicity 7 and the third eigenvalue of $L$ is 0 with multiplicity 17. Combined \eqref{jcb1} and \eqref{Jacobi.131}, the first eigenvalue of the Jacobi operator $\cJ_M$ is   $-10\times \fh{3}{8} = -\fh{15}{4}.$ Moreover, due to the fact that $-10$ is the eigenvalue of $L_0$ and we have the matrix representation $L_0$ of $L$ on $V_0$ under the basis $\hat{\mathfrak{S}}_0,$ the normal vector field $e_4$ in \eqref{subsec.frame} is the  corresponding eigenfunction of the first eigenvalue of $\cJ_M$.

Combined \eqref{jcb1}, \eqref{Jacobi.131} and Lemma \ref{estimate.dim.Killing.jacobi}, all Jacobi fields  are generated by Killing fields on $\SS^6$ and the dimension of the space of Jacobi fields on the link of Lawson--Osserman cone is $17$.
Combined \eqref{jcb1}, \eqref{Jacobi.131} and  Lemma \ref{estimate.dimension.-3},  the eigenfunction space $V_{-3}$ of $\cJ_M$ with eigenvalue $ -3$ has dimension $7$ and  $V_{-3}=\cV(M)$.

\end{proof}

\section{Rigidity of the link of Lawson--Osserman Cone}

In this section, we shall give a detailed proof of Theorem \ref{link-LO-rigidty}. We first calculate certain geometric quantities for graphs over a closed embedded minimal submanifold  $M^n $ in $ \SS^{n+q}$, where $M^n$ is not assumed to be orientable for $q \geq 2$, and $M^n$ is two-sided by a simple topological argument hence orientable 
for $q=1.$

 We define a map $G_M:\Gamma(NM)\to \SS^{n+q}$ by
\begin{gather}
G_M(X(p)) = \fh{p+X(p)}{\sqrt{1+\abs{X(p)}^2}}\qquad  \text{for each }X \in \Gamma(NM)\ \text{and }p \in M. \end{gather}
 If $\sum_{i=0}^{2} \abs{(\d^\bot)^i X} $ is sufficiently small, then $G_M(X)$ is an $n$-dimensional embedded submanifold in $\SS^{n+q}.$
Given $X \in \Gamma(NM)$ with $\sum_{i=0}^{2} \abs{(\d^\bot)^i X}$ sufficiently small, we define a map $F_X: M  \to \SS^{n+q} $ by
\begin{equation} \label{eq:sph.exp}
    F_X(p) = G_M(X(p))
    = \fh{p+X(p)}{\sqrt{1+\abs{X(p)}^2}}\, .
\end{equation}

Let the symbol '*' denote the linear combination, and let $X_{ij}^k(0\leq i,j\leq k)$ be smooth tensor functions depending only on $X,\df X,\cdots, (\df)^k X,$  $n ,q$ and $M,$ with all derivatives of  $X_{ij}^k$ uniformly bounded. Then we define  general vector functions
  \begin{gather}
      Q_k(X)= \sum_{i,j=0}^k(\df )^iX* (\df )^jX*X_{ij}^{k} \qquad\text{for each } k\geq 0,
  \end{gather}
satisfying that
 \begin{align} \label{sph.high.est} 
 \sum_{0\leq i \leq \ell}\abs{\d^iQ_k(X) } \leq  c_\ell \left( \sum_{0\leq i\leq \ell+k} \abs{(\df)^{i} X} \right)^2 
 \end{align} 
 for some constant $c_\ell >0$ depending only  on $n,q,k,\ell,$ and $M$, where $\d$ denotes the full gradient of $Q_k(X)$ in $\SS^{n+q}.$

\begin{lemm}\label{lemm:equation.graph}

Given a closed embedded minimal submanifold  $M^n $ in $ \SS^{n+q}$ and a fixed $\ell \in \NN$, there exists a sufficiently small $\eps_1>0$ depending only on $\ell, n,q$ and $M$, such that
if $  X  \in \Gamma(NM)$ with
\begin{equation} \label{eq:small.graph.funtion}
    \sum_{i=0}^{\ell+2} \abs{(\d^\bot)^i X} \leq \varepsilon_1 \,,
\end{equation}
and  $\Sigma = F_X(M)$ is minimal in $\SS^{n+q}$, then 
\begin{align} \label{eq:jacob.1} \la^{\bot} X+\tilde{B}(X) + nX = Q_2(X) \,,
\end{align}       
where $ Q_2(X) $ satisfies \eqref{sph.high.est} with $k = 2.$
\end{lemm}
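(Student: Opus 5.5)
The plan is to compute the mean curvature of $\Sigma=F_X(M)$ in $\SS^{n+q}$, expand it in powers of $X$, and identify the linear part with the Jacobi operator $\cJ_M X=\la^\bot X+\tilde B(X)+nX$, which agrees with \eqref{jocob1} since $\tilde R(X)=nX$ for the unit sphere. Minimality of $\Sigma$ then makes the linear part equal to (minus) a remainder, and the remainder is shown to have the form $Q_2(X)$ satisfying \eqref{sph.high.est} with $k=2$.

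\textbf{Step 1 (pull back the metric).} Fix $p\in M$, a local orthonormal tangent frame $\{e_i\}$ with $\d^M_{e_i}e_j(p)=0$, and a local normal frame. Compute
$$\de_i F_X=\frac{e_i+\bar\d_{e_i}X}{\sqrt{1+|X|^2}}-\frac{\ang{\df_{e_i}X,X}}{(1+|X|^2)^{3/2}}(p+X),$$
and split $\bar\d_{e_i}X=\df_{e_i}X-\sum_j\ang{B(e_i,e_j),X}e_j$. The normal-to-sphere component from the position vector drops out because $\ang{X,p}=0$. Hence the induced metric is
$$g_{ij}=\delta_{ij}-2\ang{B(e_i,e_j),X}+O(|X|^2+|\df X|^2),$$
where every error is a smooth function of $(X,\df X)$ vanishing to second order at $0$. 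Similarly, the second derivatives $\bar\d^2_{ij}F_X$ are smooth in $(X,\df X,(\df)^2X)$.

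\textbf{Step 2 (mean curvature expansion).} The mean curvature of $\Sigma$ in $\SS^{n+q}$ is
$$H_\Sigma=\Bigl(g^{ij}\bar\d^2_{ij}F_X\Bigr)^{\perp_\Sigma}+nF_X ,$$
where the $nF_X$ term accounts for the sphere. Equivalently, $\Sigma$ is minimal in the sphere if and only if $\la_\Sigma F_X=-nF_X$. Rather than project, I would pair this vector identity with a normal frame of $\Sigma$ obtained by correcting $\{e_\alpha\}$ along $F_X$ (Gram--Schmidt), and then project back to $N_pM$. This produces a map $E(X)\in N_pM$ with $E(X)=0$ when $\Sigma$ is minimal. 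The map $E$ depends smoothly on the jet $(X,\df X,(\df)^2X)$ and on the geometry of $M$, and $E(0)=H_M=0$.

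\textbf{Step 3 (identify the linearization).} The first variation of $E$ at $0$ in direction $X$ is Simons' second variation formula: $DE(0)[X]=\la^\bot X+\tilde B(X)+nX$. I would cite Simons \cite{Simons} for this, or verify it quickly from Step 1 using
$$\frac{d}{dt}\Big|_{t=0}g^{ij}=2\ang{B(e_i,e_j),X}$$
and minimality of $M$. Taylor's theorem with integral remainder then gives $E(X)=\cJ_MX+R(X)$, where $R$ is a sum of quadratic expressions in the jets $(\df)^iX*(\df)^jX$ for $i,j\le 2$, with coefficients $X^2_{ij}$ that are smooth functions of the jet. Setting $Q_2(X)=-R(X)$ yields \eqref{eq:jacob.1}.

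\textbf{Step 4 (estimate the remainder).} The bound \eqref{sph.high.est} comes from differentiating up to $\ell$ times. The coefficients stay uniformly bounded, and the Leibniz rule places at most $\ell+2$ derivatives on $X$. The required smallness \eqref{eq:small.graph.funtion} with $\eps_1$ small keeps $F_X$ an embedding, $g$ uniformly positive definite, and the Taylor coefficients bounded; this is where $\eps_1$ depends on $\ell$.

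I expect the main obstacle to be the bookkeeping in Step 2. The normal projection onto $N\Sigma$ differs from the projection onto $NM$, so I must check that this discrepancy only contributes quadratic terms. It does: the correction to the frame is $O(|X|+|\df X|)$ and it multiplies $H$-type quantities that are themselves $O(\text{jet})$.
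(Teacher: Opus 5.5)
Your argument is correct. It takes a more explicit route than the paper, which gives no computation at all and simply defers to the classical derivation in \cite{isolated.singularity}.

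That derivation is organized around the area functional. The area of the normal graph $F_X(M)$ is written as $\int_M \mathcal{F}(x,X,\df X)$, and the minimal surface system is its Euler--Lagrange operator. Taylor expanding that operator about $X=0$ then yields the second-variation (Jacobi) operator plus a quadratic remainder.

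You instead expand the mean curvature vector $H_\Sigma$ directly. You identify its linearization at the zero section with $\la^{\bot}X+\tilde B(X)+nX$ via Simons' first-variation formula for $H$. Your sign is consistent with the paper's convention $\cJ_M=\la^\bot+\tilde B+\tilde R$; parallel spheres near a great $\SS^n\subset\SS^{n+1}$ give $\frac{d}{dt}H=nX$. You then obtain the remainder by Taylor expansion in the $2$-jet.

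What each route buys:
\begin{itemize}
\item The variational route gives a divergence-form structure and an automatically self-adjoint linearization. That is what Simon's \L{}ojasiewicz and asymptotic machinery exploits.
\item Your route is self-contained and delivers exactly what this lemma asserts: the pointwise quadratic structure of $Q_2$ and the estimate \eqref{sph.high.est}. Its only cost is the projection bookkeeping, which you handle correctly by using $H_M=0$.
\end{itemize}

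Two small remarks on details.

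First, the formula $H_\Sigma=(g^{ij}\bar\d^2_{ij}F_X)^{\perp_\Sigma}+nF_X$ is right only if $\perp_\Sigma$ means the normal space of $\Sigma$ in $\RR^{n+q+1}$, which contains the radial direction $F_X$. If $\perp_\Sigma$ means the normal space inside $T\SS^{n+q}$, the term $nF_X$ must be dropped. This is harmless in your argument, because you pair with sphere-tangent normals anyway.

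Second, the Gram--Schmidt correction is unnecessary. You can define $E(X)(p)$ as the orthogonal projection of $H_\Sigma(F_X(p))$ onto $N_pM$. This is frame-independent and vanishes when $\Sigma$ is minimal. Its linearization is still $\cJ_M X$: since $H_M=0$, differentiating $\ang{H_t,\partial_iF_{tX}}\equiv 0$ and $\ang{H_t,F_{tX}}\equiv 0$ at $t=0$ shows that $\frac{d}{dt}\big|_{t=0}H_t$ already lies in $N_pM$. In any case $Q_2(X)$ is just $\cJ_MX$, which is globally defined, so local frames matter only for proving the pointwise estimate.
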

Lemma \ref{lemm:equation.graph} is classical, and it follows from \cite[$\S 4$, 4.14]{isolated.singularity}.

Assume that $X \in \Gamma(NM)$ satisfies the condition of Lemma \ref{lemm:equation.graph}, then the system in  \eqref{eq:jacob.1} satisfies the Legendre--Hadamard condition \cite{Giaquita,Morrey}. The classical $L^2$ estimates and Schauder estimates \cite{Giaquita,Morrey} for the linear elliptic system 
give that
\begin{equation}
\begin{aligned}\label{estimate:L2}
    \sum_{\ell=1}^{m} \int_M \abs{(\df)^\ell X} ^2\leq c_{m,M} \int_M \abs{X}^2 \text{ for } m\in \NN, \text{ and }
\end{aligned}
\end{equation}

\begin{equation}\begin{aligned}\label{ineq.schauder.estimate}
    \abs{X}_{j,\alpha} := \sum_{\ell=0}^j \abs{(\df)^\ell X}_0 + [(\df)^j X]_\alpha \leq c_{j,\alpha,M}  \abs{X}_0 \ \ \text{ for each }  0 <  \alpha < 1 \text{ and } j\in \NN\,, \text{with}
\end{aligned}\end{equation}
\begin{gather*}  \abs{(\df)^\ell X}_0: = \norm{(\df)^\ell X}_{C^0(M)} 
\text{ and } [(\df)^j X]_\alpha := \sup\limits_{x,y\in M,x\neq y}\fh{|(\df)^j X(x)-(\df)^j X(y)|}{d_M(x,y)^\alpha}.
\end{gather*}
By Sobolev inequality, we have

\begin{gather}
    \abs{X}_0 \leq C \sum_{\ell=0}^{m} \left( \int_M \abs{(\df)^\ell X} ^2 \right)^{\fh{1}{2}}\  \text{ for }  m > \fh{n}{2} \text{ with } C= C(n,m,M)\,.
\end{gather}
Combined \eqref{estimate:L2}, we have 
\begin{gather}
    \abs{X}_0 \leq C_1 \left(\int_M \abs{X}^2 \right)^{\fh{1}{2}} \leq C_2 \abs{X}_0 \ \text{ with } C_1= C_1(n,M),C_2=C_2(n,M). \label{ineq:l2.norm}
\end{gather}
\begin{lemm} \label{lemm:blow.up1}
    Let $X_k(k\in \ZZ^+)$ be a sequence of smooth normal vector field on $M$ satisfying the condition of Lemma \ref{lemm:equation.graph} with $\vartheta_k := \norm{X_{k}}_{L^2}\to 0(\vartheta_k >0)$ as $k\to \infty.$ For each $\alpha \in(0,1), j \in \NN$, after passing to a subsequence, $X_{k}/\norm{X_{k}}_{L^2}$ converges to a Jacobi field $Y_0$ on $M$ in $C^{j+2,\alpha}$ sense as $k\to \infty$ with $\norm{Y_0}_{L^2} = 1\,.$

\end{lemm}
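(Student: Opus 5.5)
We normalize and pass to the limit in the graph equation of Lemma \ref{lemm:equation.graph}. Set $Y_k := X_k/\vartheta_k$, so $\norm{Y_k}_{L^2}=1$. Since $X_k$ satisfies \eqref{eq:small.graph.funtion} and $F_{X_k}(M)$ is minimal, \eqref{eq:jacob.1} gives
\begin{equation*}
\la^{\bot} Y_k+\tilde B(Y_k)+nY_k = \vartheta_k^{-1}Q_2(X_k).
\end{equation*}
The first goal is a uniform $C^{j+3,\alpha}$ bound on $Y_k$, independent of $k$. Compactness then follows from Arzel\`a--Ascoli, and the limit will satisfy the Jacobi equation once we show the right-hand side tends to zero.

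For the uniform bounds we treat the equation as a linear elliptic system with a small perturbation. By \eqref{sph.high.est} with $k=2$, the quadratic remainder obeys
\begin{equation*}
\sum_{i\le m}\abs{\d^i Q_2(X_k)}\le c_m\Big(\sum_{i\le m+2}\abs{(\df)^iX_k}\Big)^2\le c_m\,\eps_k\sum_{i\le m+2}\abs{(\df)^iX_k},
\end{equation*}
where $\eps_k:=\sum_{i\le m+2}\abs{(\df)^iX_k}_0$. We can take $\eps_k\le\eps_1$, and moreover $\eps_k\to0$. This last point needs care: smallness in $C^{\ell+2}$ together with $\norm{X_k}_{L^2}\to0$ yields $C^0\to0$ by interpolation, and hence $C^{\ell+1}\to0$ by interpolation again. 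If needed, we apply Lemma \ref{lemm:equation.graph} with a larger $\ell$ than the $j$ we are aiming for. Dividing by $\vartheta_k$, the remainder is bounded by $c\,\eps_k$ times the corresponding norm of $Y_k$. We then run Schauder estimates for the linear operator $\la^\bot+\tilde B+n$ in the form
\begin{equation*}
\abs{Y}_{j+2,\alpha}\le C\big(\abs{(\la^\bot+\tilde B+n)Y}_{j,\alpha}+\abs{Y}_0\big).
\end{equation*}
This gives
\begin{equation*}
\abs{Y_k}_{j+2,\alpha}\le C\big(\eps_k\abs{Y_k}_{j+2,\alpha}+\abs{Y_k}_0\big),
\end{equation*}
so the first term can be absorbed for large $k$. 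Finally, $\abs{Y_k}_0$ is bounded by the $L^2$ norm using the same absorption in the $L^2$-Sobolev estimates, as in \eqref{estimate:L2}–\eqref{ineq:l2.norm}. Repeating the argument at level $j+1$ gives a uniform $C^{j+3,\alpha}$ bound.

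Arzel\`a--Ascoli then yields a subsequence converging in $C^{j+2,\alpha}$ to some $Y_0$. Because convergence is uniform and $M$ is compact, $\norm{Y_0}_{L^2}=1$. The right-hand side $\vartheta_k^{-1}Q_2(X_k)$ is bounded in $C^0$ by $c\,\eps_k\abs{Y_k}_{2,0}\to0$. Hence $\la^\bot Y_0+\tilde B(Y_0)+nY_0=0$; since $\tilde R(Y)=nY$ on $\SS^{n+q}$, this says $\cJ_MY_0=0$, so $Y_0$ is a Jacobi field. Smoothness of $Y_0$ follows from elliptic regularity.

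The main obstacle is the absorption step: the nonlinear remainder must be controlled linearly in $Y_k$ with a small coefficient at the same derivative order used in the Schauder estimate. This requires showing that the higher derivatives of $X_k$ tend to zero, not merely that they are small, which is why interpolation and the choice of $\ell$ in Lemma \ref{lemm:equation.graph} enter.
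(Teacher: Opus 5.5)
Your overall route is the paper's: normalize $Y_k=X_k/\vartheta_k$, get $k$-independent H\"older bounds from Schauder and $L^2$ estimates for the equation of Lemma \ref{lemm:equation.graph}, extract a limit by Arzel\`a--Ascoli, and show the right-hand side vanishes in the limit. Your remark that a $C^{j+3,\alpha}$ bound is needed for genuine $C^{j+2,\alpha}$ convergence is a correct refinement. The difference is at the uniform bound. The paper takes $\abs{Y_k}_{j+2,\alpha}\le c\abs{Y_k}_0\le C$ directly from the classical estimates \eqref{ineq.schauder.estimate} and \eqref{ineq:l2.norm} for the nonlinear system. You try to derive it by absorption, and that derivation has a gap.

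First, \eqref{sph.high.est} is a pointwise $C^m$ bound that costs two extra derivatives of $X$ in sup norm, and it gives no H\"older control. To estimate $\abs{\vartheta_k^{-1}Q_2(X_k)}_{j,\alpha}$ with it you must take $m=j+1$. That puts $\sum_{i\le j+3}\abs{(\df)^iY_k}_0$ on the right, which $\abs{Y_k}_{j+2,\alpha}$ on the left does not control, so nothing is absorbed. In $W^{m,2}$ the orders do match, but then the small factor is $\sum_{i\le m+2}\abs{(\df)^iX_k}_0$ with $m$ growing with $j$ and $n$. Second, either way you need smallness of $X_k$ in $C^{m+2}$ with $m$ depending on the arbitrary $j$. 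The hypothesis only gives smallness in $C^{\ell+2}$ for the fixed $\ell$ of Lemma \ref{lemm:equation.graph}, and your interpolation only upgrades this to $X_k\to0$ in $C^{\ell+1}$. ``Applying the lemma with a larger $\ell$'' presupposes exactly the bound you do not have.

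The missing point is that absorption needs only a \emph{low-order} norm of $X_k$ to be small, provided you use the structure of $Q_2$ rather than the squared bound. $Q_2$ is bilinear in $(X,\df X,(\df)^2X)$ with smooth coefficients, so H\"older product and composition estimates give $\abs{Q_2(X)}_{j,\alpha}\le c_j\big(\sum_{i\le2}\abs{(\df)^iX}_0\big)\abs{X}_{j+2,\alpha}$. Equivalently, since the minimal surface system is quasilinear, you can move the top-order part of $Q_2$ into the principal coefficients, as the paper does with $\cN_k$ in the proof of Theorem \ref{theo:rigidty.link}.

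Absorb first at $j=0$, using the $C^2$-smallness of $X_k$, or $X_k\to0$ in $C^{1,\alpha}$ from your interpolation in the quasilinear form. Then $X_k=\vartheta_kY_k\to0$ in $C^{2,\alpha}$, so the small factor tends to zero and you can induct on $j$ without changing $\ell$. The $C^0$-by-$L^2$ bound follows from the Schauder bound together with the interpolation $\abs{Y}_0\le\delta\abs{Y}_{2,\alpha}+C_\delta\norm{Y}_{L^2}$ on the compact $M$.

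Finally, you never need higher derivatives of $X_k$ to tend to zero for the limit equation. Once $Y_k$ is bounded in $C^2$, $\abs{\vartheta_k^{-1}Q_2(X_k)}\le c\,\vartheta_k\big(\sum_{i\le2}\abs{(\df)^iY_k}\big)^2\to0$, which is how the paper concludes that $Y_0$ is a Jacobi field.
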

\begin{proof}
   Let $Y_k = X_k / \norm{X_{k}}_{L^2}, $ and $Q_{Y_k} =\fh{1}{\vartheta_k}Q_2(X_{k})\,. $
   Then 
   \begin{gather}
\la^{\bot} Y_k+\tilde{B}(Y_k) + nY_k =  Q_{Y_k}
   \end{gather} 
   and \begin{gather}
       \sum_{0 \leq j \leq s} \abs{
       \d^j Q_{Y_{k}}} \leq c_s \vartheta_k\left(   \sum_{0 \leq j\leq s+2}  \abs{(\df)^j Y_k}\right)^2.
   \end{gather}
   Since $X_k$ satisfy  \eqref{eq:jacob.1} and $\vartheta_k \to 0$ as $k\to \infty,$ together with \eqref{ineq.schauder.estimate} and \eqref{ineq:l2.norm}, we have \begin{gather}
       \abs{Y_k}_{j+2,\alpha} \leq c_{j,\alpha} \abs{Y_k}_0 \leq C_{j,\alpha}.
   \end{gather}
  By Arzel$\grave{a}$--Ascoli theorem, after passing to a subsequence, $Y_{k}$ converges to $Y_0$ in $C^{j+2,\alpha}$ sense as $k\to \infty,$ and  $Y_0$ is a Jacobi field on $M$ with $\norm{Y_0}_{L^2} = 1.$
\end{proof}

The orientation-preserving isometry group of $\SS^{n+q}$ is SO$(n+q+1).$ For any $S \in$ SO$(n+q+1)$ and $p\in M  \subset \SS^{n+q} \subset\RR^{n+q+1}$, $pS \in  \SS^{n+q} \subset\RR^{n+q+1} $ is a row vector since $p$ is a row vector in $\RR^{n+q+1}$. We denote $M\cdot S :=\{p S:p\in M\}$ as a submanifold in $\SS^{n+q}$. Recall that $\fs\fo(n+q+1) $ denotes the Lie algebra of SO$(n+q+1)$. 
If $A \in \fs\fo(n+q+1),$ then $$\exp (A) =\sum\limits_{k=0}^{\infty} \fh{A^k}{k!} \in \mathrm{SO}(n+q+1).$$ In particular,  $\exp( O_{n+q+1} )= I_{n+q+1} $. As a subspace of $\RR^{(n+q+1)\times (n+q+1)}$, we assume $\fs \fo(n+q+1) $ has the standard inner product of $\RR^{(n+q+1)\times (n+q+1)}$. For $A\in \fs\fo(n+q+1)$, we denote $\ang{A,A} = \tr(A^tA)$ and $ \norm{A} = \ang{A,A}^{\fh{1}{2}}.$ There exists a neighborhood $U_1\subset \fs \fo(n+q+1)$ of $O_{n+q+1}$  and a neighborhood $U_2\subset $ SO$(n+q+1)$ of $I_{n+q+1}$ such that the map $\exp$ is a diffeomorphism from $U_1$ to $U_2$.

Let $Jac(M)$ denote the space of Jacobi fields on $M$.
 We define a projection $\Pi :L^2(\Gamma(NM)) \to Jac(M)$ by
\begin{gather}
\begin{aligned}
\Pi (X) = \sum_{i=1}^{m} \ang{X,\phi_i}_{L^2}\phi_i\qquad\qquad \text{for every } X \in  L^2(\Gamma(NM)),
\end{aligned}
\end{gather}
where $\{\phi_i\}_{i=1}^{m}$ denote an orthonormal basis of  $Jac(M).$
In particular, the definition of $\Pi$ is independent of the choice of  $\{\phi_i\}_{i=1}^{m}$.
    
\begin{lemm} \label{lemm.full.Jacobi.field}
 For a closed embedded minimal submanifold $M^n$ in $\SS^{n+q},$
    there exists a small $\eps_2>0$ depending only on $n,q,M$ such that if $A \in U_0 := \{A'\in \mathfrak{so}(n+q+1): \norm{A'}<\eps_2\}$, then for every constant $t\in[-1,1]$ there exists a unique $X_t \in \Gamma(NM)$ satisfying $M \cdot \exp(tA)  = G_M(X_t)$ and $\lim\limits_{t\to  0} \fh{X_t(p)}{t}=(pA)^{N}$ for any $p\in M,$ where we also see $X_t(p) \in N_pM$ as a row vector in $\RR^{n+q+1}. $
Moreover, if all Jacobi fields on $M$ are Killing Jacobi fields,
and $\Psi_1 : U_0 \to Jac(M)$ is a map defined by   $ \Psi_1 (A) = \Pi (X_A)$  with $ G_M(X_A) = M\cdot \exp(A) $  for every $ A \in U_0 $,
then $\Psi_1$ is surjective.
\end{lemm}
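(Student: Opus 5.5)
The plan has two parts. First I will construct the normal graphs $X_t$ for the rotated copies $M\cdot\exp(tA)$. Then I will show that $\Psi_1$ is a submersion at $O_{n+q+1}$ and apply the inverse function theorem.

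\emph{Construction of $X_t$.} I would use the map $G_M$ restricted to a small tubular neighbourhood $\{v\in NM:\ |v|<\delta\}$. Since $M$ is closed and embedded, and $dG_M$ at the zero section is the identity on $T_pM\oplus N_pM$, the inverse function theorem together with compactness shows that $G_M$ is a diffeomorphism onto a neighbourhood $\mathcal U$ of $M$ in $\SS^{n+q}$. Let $\pi:\mathcal U\to M$ be the induced projection. For $S=\exp(tA)$ with $\|A\|<\eps_2$ and $t\in[-1,1]$, the submanifold $M\cdot S$ is $C^k$-close to $M$, uniformly in $t$. Hence $M\cdot S\subset\mathcal U$, and the map $p\mapsto \pi(pS)$ is $C^1$-close to the identity of $M$, so it is a diffeomorphism of $M$. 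Writing $pS=G_M(v(p))$ with $v(p)\in N_{\pi(pS)}M$, I define
\[X_t:=v\circ(\pi(\cdot\,S))^{-1}.\]
This is a smooth section of $NM$ with $G_M(X_t)=M\cdot\exp(tA)$. Uniqueness follows because $G_M$ is injective on the tube: any other section with the same image would give two preimages of the same point. The whole family depends smoothly on $(t,A)$.

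\emph{The derivative at $t=0$.} Differentiating $G_M(X_t(q_t))=p\exp(tA)$ at $t=0$, where $q_t=\pi(p\exp(tA))$ and $X_0=0$, gives
\[\dot q_0+\dot X_0(p)=pA.\]
Here $\dot q_0\in T_pM$ and $\dot X_0(p)\in N_pM$, and the factor $(1+|X|^2)^{-1/2}$ contributes nothing at first order. Projecting onto $N_pM$ yields $\lim_{t\to0}X_t(p)/t=(pA)^N$.

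\emph{Surjectivity of $\Psi_1$.} By the first part, $\Psi_1$ is a smooth map $U_0\to Jac(M)$ with $\Psi_1(O_{n+q+1})=0$, and its differential at $O_{n+q+1}$ is
\[A\mapsto\Pi\big((pA)^N\big)=\cT(A),\]
where $\cT$ is the map from Lemma \ref{estimate.dim.Killing.jacobi}. By Simons, every Killing Jacobi field is a Jacobi field, so $\Pi$ acts as the identity on the image of $\cT$. By hypothesis every Jacobi field is Killing, so $d\Psi_1(O_{n+q+1})$ is onto $Jac(M)$. Now restrict $\Psi_1$ to $W\cap U_0$, where $W=\ker(\cT)^\perp\subset\fs\fo(n+q+1)$. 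The differential is then a linear isomorphism $W\to Jac(M)$, and the inverse function theorem gives a local diffeomorphism from a neighbourhood of $0$ in $W$ onto a neighbourhood of $0$ in $Jac(M)$.

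The step I expect to need the most care is interpreting and securing the surjectivity claim. The literal statement says $\Psi_1$ is onto all of $Jac(M)$, but the image of the small ball $U_0$ is bounded while $Jac(M)$ is a vector space. So I would read ``surjective'' as onto a fixed neighbourhood of $0$ in $Jac(M)$ (equivalently, after shrinking $\eps_2$, surjective onto $\Psi_1(U_0)\supset B_r(0)\cap Jac(M)$), which is what the rigidity argument needs. The remaining work is to confirm that all the smallness and smooth-dependence constants (tube radius, closeness of $\pi(\cdot\,S)$ to the identity, the neighbourhood in the inverse function theorem) depend only on $n$, $q$ and $M$.
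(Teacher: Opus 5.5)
Your proposal is correct and follows essentially the paper's route: realize $M\cdot\exp(tA)$ as a normal graph $G_M(X_t)$, differentiate the graph relation at $t=0$ to get $\lim_{t\to0}X_t/t=(pA)^N$, and conclude from the fact that $d\Psi_1(O_{n+q+1})=\cT$ is onto $Jac(M)=\cK(M)$; the paper differentiates $\psi(t,p)=G_M(X_t(p))\exp(-tA)$, which is the same computation as yours with $q_t=\pi(p\exp(tA))$. Your reading of ``surjective'' as onto a neighbourhood of $0$ in $Jac(M)$, with a continuous local inverse from the inverse function theorem on $\ker(\cT)^\perp$, is exactly what the proof of Theorem \ref{theo:rigidty.link} uses when it picks $A_k$ with $\Psi_1(A_k)=\Pi(Y_k)$ and $\|A_k\|\to0$.
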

\begin{proof}
Since $M^n$ is a closed embedded minimal submanifold in $\SS^{n+q},$ the second fundamental form of $M$ is bounded. 
Given a small $\eps_2>0$ depending only on $n,q,M$, we choose $A\in U_0 := \{A'\in \mathfrak{so}(n+q+1): \norm{A'}<\eps_2\}$, then there exists a unique $X_t \in \Gamma(NM)$ with $M \cdot \exp(tA)  = G_M(X_t)$ for any $-1 \leq t \leq 1$ and $\lim\limits_{t\to0}X_t(p) = 0$ uniformly for all $p \in M$. For any $p\in M,$ let $\psi(t,p) = G_M(X_t(p)) \exp(-tA),$ then $\psi(t,p) $  is a diffeomorphism of $M$ for each fixed $t$. So we have 
  \begin{gather}
     \left(\lim\limits_{t \to 0} \fh{\psi(t,p) -p}{t} \right)^{N}  = 0, 
     \end{gather}
and
     \begin{gather}
      \begin{aligned}
     &\lim\limits_{t \to 0} \fh{\psi(t,p) -p}{t} = \lim\limits_{t \to 0} \fh{G_M(X_t(p)) \exp(-tA) -p}{t}  \\
     =&   \lim\limits_{t \to 0} \fh{G_M(X_t(p))-p +p  -p\exp(tA)}{t}\exp(-tA)
     =\lim\limits_{t\to0} \fh{G_M(X_t(p)) - p}{t} -pA\\
=&  \lim\limits_{t\to0} \fh{\fh{p+X_t(p)}{\sqrt{1+\abs{X_t(p)}^2}} - p}{t}-pA 
      = \lim\limits_{t\to0} \left(\fh{ X_t(p)}{t\sqrt{1+\abs{X_t(p)}^2}} + \fh{\fh{1}{\sqrt{1+\abs{X_t(p)}^2}}-1}{t} p \right)-pA\\ 
=& \lim\limits_{t\to0} \fh{X_t(p)}{t}-pA.
     \end{aligned}
       \end{gather}
 This implies $\lim_{t\to0} \fh{X_t(p)}{t}=(pA)^N$.

  Hence, $D\Psi_1(A)(p) = (pA)^{N}.$ If all Jacobi fields on $M$ are Killing Jacobi fields, then $D\Psi_1$ is  a surjective map at $O_{n+q+1} \in \fs \fo(n+q+1)$ and $\Psi_1 $ is a surjective map on $U_0$ by taking $\eps_2$ sufficiently small (depending only on $n,q,M$).
\end{proof}

\begin{theo} \label{theo:rigidty.link}
    Given a closed embedded  minimal submanifold $M^n$ in $\SS^{n+q}$ such that all Jacobi fields of $M$ are Killing Jacobi fields. There exists a constant $\delta\in(0,\varepsilon_1]$ depending only on $n,q$ and $M$ such that if $M' = G_M(Y)$ is an embedded minimal submanifold in $\SS^{n+q}$ with $\sum_{i=0}^{2} \abs{(\d^\bot)^i Y} \leq \delta$, then $M' = M\cdot\exp(A)$  for some $A\in \mathfrak{so}(n+q+1)$, i.e., $M'$ is a rigid motion   of $M$ in $\SS^{n+q}.$
\end{theo}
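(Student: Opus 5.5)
We argue by contradiction, using Lemma \ref{lemm:blow.up1} together with the surjectivity of $\Psi_1$ from Lemma \ref{lemm.full.Jacobi.field}. Suppose the conclusion fails. Then there are embedded minimal submanifolds $M_k=G_M(Y_k)$ with $\sum_{i=0}^2|(\df)^iY_k|\to0$ such that no $M_k$ is a rigid motion of $M$. The first step is to upgrade the smallness of $Y_k$. Since $Y_k$ solves the system \eqref{eq:jacob.1}, the Schauder estimate \eqref{ineq.schauder.estimate} (bootstrapped) gives $\sum_{i\le \ell+2}|(\df)^iY_k|\to0$ for every fixed $\ell$. Hence Lemma \ref{lemm:equation.graph} applies to $Y_k$, and to any small reparametrization of it.

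The key step is a normalization (``slice'') that kills the Jacobi component. For each $k$ we look for $A_k\in U_0$ with $\|A_k\|\to0$ such that, writing $M_k\cdot\exp(-A_k)=G_M(X_k)$, we have $\Pi(X_k)=0$. To find $A_k$, consider the map $\Phi_k(A):=\Pi(X^{(k)}_A)$, where $G_M(X^{(k)}_A)=M_k\cdot\exp(-A)$. At $k=\infty$ (that is, $Y=0$) this map is $A\mapsto\Pi(X_{-A})$. By Lemma \ref{lemm.full.Jacobi.field}, its differential at $0$ is $A\mapsto -\Pi((pA)^N)$, and this is surjective onto $Jac(M)$ because every Jacobi field is a Killing Jacobi field. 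We restrict to a complement $W$ of the kernel, so the differential becomes an isomorphism $W\to Jac(M)$. Since $\Phi_k\to\Phi_\infty$ in $C^1$ near $0$ (by smooth dependence of $G_M^{-1}$ on the submanifold, and $Y_k\to0$ in $C^{\ell}$), the inverse function theorem, or a degree argument, gives $A_k\in W$ with $\Phi_k(A_k)=0$ and $A_k\to0$.

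We expect this to be the main technical obstacle. One must check that $X^{(k)}_A$ is well defined and depends $C^1$ on $(A,Y_k)$ uniformly. This requires a tubular-neighborhood and normal-graph argument, as in the proof of Lemma \ref{lemm.full.Jacobi.field}, applied to $M_k\cdot\exp(-A)$, which is a small $C^{\ell+2}$ perturbation of $M$.

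Now $X_k\neq0$, since otherwise $M_k=M\cdot\exp(A_k)$, which is excluded. Moreover $X_k\to0$ in $C^{\ell+2}$, and $M\cdot\exp(\cdot)$ applied to $G_M(X_k)$ is again minimal, so $G_M(X_k)$ is minimal. Apply Lemma \ref{lemm:blow.up1}: after passing to a subsequence, $X_k/\|X_k\|_{L^2}\to Y_0$ in $C^{2,\alpha}$, where $Y_0$ is a Jacobi field with $\|Y_0\|_{L^2}=1$. But $\Pi$ is continuous on $L^2$, so
\[
\Pi(Y_0)=\lim_{k\to\infty}\Pi(X_k)/\|X_k\|_{L^2}=0 .
\]
Since $Y_0\in Jac(M)$, this forces $Y_0=\Pi(Y_0)=0$, contradicting $\|Y_0\|_{L^2}=1$. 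Hence for $\delta$ small enough every such $M'$ equals $M\cdot\exp(A)$, and $\delta\le\varepsilon_1$ can be arranged by shrinking.
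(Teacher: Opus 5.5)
Your proof is correct, but its key step differs from the paper's.

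The paper never moves $M_k$. It uses the surjectivity of $\Psi_1$ to find a rotated copy $M\cdot\exp(A_k)=G_M(Y_k^*)$ of $M$ with $\Pi(Y_k^*)=\Pi(Y_k)$. It then works with the difference $\hat Y_k=Y_k-Y_k^*$ of two minimal graphs over $M$. Writing $Q_2(Y_k)-Q_2(Y_k^*)$ as an expression linear in $\hat Y_k,\df\hat Y_k,(\df)^2\hat Y_k$ with coefficients of size $O(\|Y_k\|_{L^2})$, it gets $\cN_k(\hat Y_k)=0$ for elliptic operators $\cN_k\to\cJ$. Blowing up $\hat Y_k/\|\hat Y_k\|_{L^2}$ then gives a unit Jacobi field with zero projection.

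You instead let the group act on $M_k$ and place its graph function in the slice $\Pi=0$. Because $G_M(X_k)=M_k\cdot\exp(-A_k)$ is itself minimal, Lemma \ref{lemm:blow.up1} applies verbatim. This removes the need for the difference equation, the linearization of $Q_2(Y_k)-Q_2(Y_k^*)$, and the perturbed operators $\cN_k$. It also avoids the paper's auxiliary step showing $Y_k^*/\|Y_k^*\|_{L^2}\to Y_0$.

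The price is an inverse-function or degree argument for a $k$-dependent family $\Phi_k$. This needs joint continuity of the normal-graph function of $M_k\cdot\exp(-A)$ in $(A,Y_k)$, whereas the paper only needs the local surjectivity of the single map $\Psi_1$ built from $M$. As you note, this is a routine tubular-neighbourhood argument. If you use the degree version on a small ball in $W$, you need only uniform $C^0$ convergence $\Phi_k\to\Phi_\infty$ there, not $C^1$ convergence.
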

\begin{proof}
    We prove the theorem by contradiction. If there is a sequence $M_k(k\in \ZZ^+)$ satisfying the condition of the theorem and $\sum_{i=0}^{2} \abs{(\d^\bot)^i Y_k} \to 0$ as $k\to \infty,$  but none of $M_k$ is a rigid motion of $M$ in $\SS^{n+q+1}.$ By Lemma \ref{lemm:equation.graph} and Lemma \ref{lemm:blow.up1}, after passing to a subsequence, we can assume $Y_k / \norm{Y_k}_{L^2} \to Y_0$ in $C^{2,\alpha}$ sense for some $\alpha\in(0,1)$ as $k\to \infty$, where $Y_0 \in Jac(M)$ is a Jacobi field on $M$ and $\norm{Y_0}_{L^2} = 1.$ Then $\Pi(Y_k)/ \norm{Y_k}_{L^2} \to Y_0$ in $L^2$ sense, and $\norm{\Pi(Y_k)}_{L^2}/\norm{Y_k}_{L^2} \to 1$  as $k\to \infty$. So we have
    \begin{gather} \label{identity.subsequence} \ \Pi(Y_k)/ \norm{\Pi(Y_k)}_{L^2} \to Y_0 \text{ as } k\to \infty.
    \end{gather} 
    By Lemma \ref{lemm.full.Jacobi.field}, for  every sufficiently large $k$ , there exists $A_k \in  \fs \fo(n+q+1)$ such that $\Psi_1(A_k)  =\Pi(Y_k)\to 0$ as $k\to \infty.$ In particular, we can assume $\norm{A_k}\to 0$ as  $k\to \infty.$ Let $Y_k^*\in \Gamma(NM)$ satisfying $ M\cdot \exp (A_k) = G_M(Y_k^*)$, then
$$\Pi (Y_k^*) = \Psi_1(A_k)=\Pi(Y_k).$$
 For every sufficiently large $k$, we set $\hat{Y}_k = Y_k -Y_k^*,$ then
    $\hat{Y}_k \neq 0$ since none of $M_k$ is a rigid motion of $M$ in $\SS^{n+q+1}.$
    Since \begin{gather}\Pi(Y_k^*)/\norm{\Pi(Y_k^*)}_{L^2}  = \Pi(Y_k)/\norm{\Pi(Y_k)}_{L^2} \to Y_0 \text{ as } k\to \infty,
    \end{gather}
    we have  $\norm{Y_k^*}_{L^2} \geq \norm{\Pi(Y_k^*)}_{L^2}>0,$ and  $\sum_{i=0}^{2} \abs{(\d^\bot)^i Y_k^*} \to 0$ as $k\to \infty$ since $\norm{A_k} \to 0$ as $k\to \infty$. So by Lemma \ref{lemm:equation.graph} and Lemma \ref{lemm:blow.up1}, after passing to a subsequence, we can assume  \begin{gather}
    Y_k^*/ \norm{Y_k^*}_{L^2} \to \tilde{Y}_0 \text{ in } L^2 \text{ sense as } k \to \infty.
    \end{gather}
    From a same  argument as in the proof of \eqref{identity.subsequence}, we have \begin{gather}
     \tilde{Y}_0 = Y_0 \text{ and } \norm{Y_k^*}_{L^2} / \norm{Y_k}_{L^2} \to 1 \text{ as } k \to \infty.
    \end{gather} 

 Now,    we define an operator $\cN= \cJ -Q_2$, where \begin{gather}
    \cJ(X) = \la^{\bot} X +\tilde{B}(X) +n X ,\end{gather}
    and $Q_2$ is defined in \eqref{eq:jacob.1}. From Lemma \ref{lemm:equation.graph}, $\cN (Y_k) = \cN(Y_k^*) = 0$  and $\cJ(\hat{Y}_k) = Q_2(Y_k) - Q_2(Y_k^*).$ 
  For $\ell=0,1,2, k \geq 1,$   there exist smooth tensor functions $\cT_{\ell,k}$ such that 
    \begin{gather}
        Q_2(Y_k) - Q_2(Y_k^*) = \cT_{0,k} \hat{Y}_k + \ang{\cT_{1,k}, \df \hat{Y}_k} + \ang{\cT_{2,k}, (\df)^2 \hat{Y}_k} ,
    \end{gather}
    where $\abs{\cT_{\ell,k}} 
\leq c_M\norm{Y_k}_{L^2}$, and
$c_M$ is a constant  depending only on $n,q,M.$  
Let us define  $\cN_0 = \cJ$ and 
    \begin{gather}
    \begin{aligned}
        \cN_k &= \cJ -\cT_{0,k}-\left( \ang{\cT_{1,k}, \df(\cdot)} +\ang{\cT_{2,k}, (\df)^2(\cdot)}\right)\qquad \text{for each }k\ge1. 
        \end{aligned}
    \end{gather}
 Clearly,  $\cN_k$ converges smoothly  to $\cN_0$ as $k\to \infty$ and $\cN_k$ is elliptic for all $k$ large enough. Let $Z_k = \hat{Y}_k / \norm{\hat{Y}_k}_{L^2},$ then $\cN_k(Z_k) = 0 .$ By the Schauder estimate and $L^2$ estimate, $|(\df)^j Z_k|$ is uniformly bounded  depending only on $n,q,j$ and $M.$ 
By Arzel$\grave{a}$--Ascoli theorem, after passing to a subsequence, $Z_{k}$ converges to $Z_0 \in Jac(M)$ in $C^{2,\alpha}$-sense for some $\alpha\in(0,1)$ as $k\to \infty$. Then $\norm{Z_0}_{L^2} =1$ and $\cN_0(Z_0) = 0 .$  However, $0 = \Pi(Z_{k}) \to\Pi(Z_0) = Z_0$ in $L^2$ sense as $k\to \infty$,  which is a contradiction.
    \end{proof}
As a corollary, we immediately obtain Theorem \ref{link-LO-rigidty}. 
In fact, the conditions in Theorem \ref{theo:rigidty.link} and Theorem \ref{link-LO-rigidty} can be weakened with the aid of Allard's regularity theorem.

 Let us first introduce some language of varifolds by Almgren, see \cite{GMT} as a standard reference.  An $n$-varifold $V$ in $\SS^{n+q+1}\subset\RR^{n+q+1}$ is said to be \emph{stationary} in $\SS^{n+q+1}$ if
 \begin{gather}
     \int_{\SS^{n+q+1}} \mathrm{div}_VY d\mu_V=0\qquad\qquad \text{for each }Y\in\Gamma(T\SS^{n+q+1}).
 \end{gather}
Here, div$_VY$ is the divergence of $Y$ restricted on spt$V$, and $\mu_V$ denotes the Radon measure corresponding to $V$.
We define the density of $V$ at the point $p\in \RR^{n+q+1}$ by 
 \begin{gather}
     \Theta_V(p) = \lim\limits_{r \to 0^+} \Theta_V(p,r) \ , \text{ where } \Theta_V(p,r) = \fh{\mu_V(B_{r }(p))}{\omega_n r^n} \text{ for } r>0\,.
 \end{gather}
Given a countably $n$-rectifiable set $S \subset \RR^{n+q+1}$, we use $|S|$ 
to denote the multiplicity one $n$-varifold associated with $S$.  
\begin{lemm}  \label{lemm:graph}
     For a constant $\eps_3\in(0,1)$, and a closed embedded minimal submanifold $M^n$ in $\SS^{n+q}$, there exists a small constant $\rho >0$ depending only on $n,q,\eps_3$ and $M$ such that if $V$ is a stationary integral $n$-varifold  in $\SS^{n+q}$ with  $\mu_V (\SS^{n+q}) \leq (2-\eps_3)\cH^n(M)$ and the Hausdorff distance $d_{\cH}(M,\spt V) \leq \rho,$ then $V = |\Sigma|$, and $\Sigma$ is an embedded minimal submanifold in $\SS^{n+q}$ so that $\Sigma$ is a graph on $M,$ i.e., $\Sigma = G_M(X)$ for some $X\in \Gamma(NM)$ with small $\sum_{i=0}^{2} \abs{(\d^\bot)^i X} $ (depending only on $\rho$). 
\end{lemm}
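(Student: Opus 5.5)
The proof will be a compactness argument combined with Allard's regularity theorem, run by contradiction. Fix $\eps_3$ and $M$, and suppose the conclusion fails for $\rho_k\to 0$. Then there are stationary integral $n$-varifolds $V_k$ in $\SS^{n+q}$ with
\[
\mu_{V_k}(\SS^{n+q})\le (2-\eps_3)\cH^n(M), \qquad d_{\cH}(M,\spt V_k)\le \rho_k,
\]
such that none of the $V_k$ is a multiplicity one graph $G_M(X_k)$ with $\sum_{i=0}^2|(\df)^iX_k|$ small. The masses are uniformly bounded, so Allard's compactness theorem for integral stationary varifolds gives, after passing to a subsequence, $V_k\to V$. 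The limit $V$ is a stationary integral varifold with $\mu_V(\SS^{n+q})\le(2-\eps_3)\cH^n(M)$. Because Hausdorff convergence of the supports combined with the monotonicity formula (in the sphere) forces $\spt V_k\to\spt V$, we get $\spt V=M$.

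The next step identifies $V$. By the constancy theorem (Allard), a stationary integral varifold supported on the connected smooth submanifold $M$ equals $\theta|M|$ for some constant integer $\theta\ge1$. If $M$ is disconnected, we apply this on each component and adjust the mass bound accordingly; for the link of the Lawson--Osserman cone $M$ is connected. The mass bound gives $\theta\cH^n(M)\le(2-\eps_3)\cH^n(M)$, hence $\theta=1$ and $V=|M|$.

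We then upgrade varifold convergence to smooth graphical convergence. Since $M$ is closed and smooth, there is $r_0>0$ such that for every $p\in M$ the density ratio $\Theta_{|M|}(p,r_0)$ is close to $1$ and $M\cap B_{r_0}(p)$ is close to a plane. Varifold convergence gives $\Theta_{V_k}(x,r_0)\le 1+\delta$ for all $x\in\spt V_k$ and $k$ large, where $\delta$ is Allard's threshold; the uniform mean curvature bound comes from the ambient sphere. Allard's regularity theorem then shows that $\spt V_k\cap B_{r_0/2}(p)$ is a $C^{1,\alpha}$ graph over the tangent plane with small norm, uniformly in $p$, and that $V_k$ has multiplicity one. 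Patching these local graphs via the nearest-point projection onto $M$, which is well defined in a tubular neighborhood, we write $\spt V_k=G_M(X_k)$ with $\|X_k\|_{C^{1,\alpha}}\to0$. The minimal surface system is elliptic, so Schauder estimates (as in \eqref{ineq.schauder.estimate}, applied to the quasilinear equation satisfied by $X_k$) bootstrap this to $\sum_{i=0}^{2}|(\df)^iX_k|\to0$. This contradicts the choice of $V_k$.

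The main obstacle is the globalization in this last step. Allard's theorem only gives local graphs, and one must rule out extra sheets or pieces that are hidden by the Hausdorff condition. Extra sheets are excluded because multiplicity one is forced near every point by the density bound. Pieces of $V_k$ far from $M$ are excluded by the Hausdorff closeness. Single-valuedness of the normal graph then follows from each local graph projecting injectively and from the fact that the projection has degree one, which is obtained by comparing total mass with $\cH^n(M)$.
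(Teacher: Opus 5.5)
Your proposal is correct and follows essentially the same route as the paper. Both arguments go by contradiction: Allard's compactness theorem, then the constancy theorem together with the mass bound to get $V_k\rightharpoonup |M|$ with multiplicity one, then Allard's regularity theorem. The differences are minor. The paper applies Allard to the stationary cones $CV_k$ in $\RR^{n+q+1}$, whereas you apply it directly in the sphere using the bounded generalized mean curvature $H=-nx$. You also spell out steps the paper leaves implicit: the covering/degree argument and Schauder bootstrap for the global normal graph, and the need for $M$ to be connected for the single-constant multiplicity argument.
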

\begin{proof} 
We prove the lemma by contradiction. Let $\{V_k\}_{k\geq1}$ be a sequence of stationary integral $n$-varifolds in $\SS^{n+q},$ such that $d_{\cH}(\spt V_k, M)  \to 0 $ as $k\to \infty$ and $\mu_{V_k}(\SS^{n+q})\leq (2-\eps_3)\cH^n(M)$, but none of $V_k$ satisfies the conclusion in the lemma.

 \textbf{Claim.} $V_k \weakto |M|$ in the sense of varifolds as $k\to \infty$.

 After passing to a subsequence, there is a stationary integral $n$-varifold $V^*$ in $\SS^{n+q}$ so that $V_k \weakto V^*$ in the sense of varifolds by the compactness theorem of stationary integral $n$-varifolds (see  \cite{Allard1}). So $d_{\cH}(\spt V^*, M) = 0$, which infers $V^* = \theta_0|M|$ for some $\theta_0\in \NN$ due to the constancy theorem(see \cite{GMT}). Since $\mu_{V_k}(\SS^{n+q}) < (2- \eps_3)\cH^n(M)  ,$ then $\theta_0=1$ and $V^* = |M|.$ This gives the proof of the \textbf{Claim}.

 Let $CM$ denote the cone with link $M$, and $CV_k$ denote the cone with link spt$V_k$ and multiplicity function $\theta_k$, where $\theta_k$ is 1-homogeneous and equal to the multiplicity function of $V_k$ on $\SS^{n+q}$.
The above \textbf{Claim} implies that $CV_k \weakto |CM|$ locally in the sense of varifolds as $k\to \infty$.
Noting that all $CV_k $ are stationary in $\RR^{n+q+1}$.
From Allard's regularity theorem in  \cite{Allard1}, we have $V_k = |\Sigma_k|$ for all $k$ large enough, where $\Sigma_k$ is an $n$-dimensional closed embedded minimal submanifold in $\SS^{n+q}$ and $\Sigma_k$ is the  graph on $M$ with $\sum_{i=0}^{2} \abs{(\d^\bot)^i X} \to 0$ as $k\to \infty$,  which is a contradiction. 
\end{proof}

Combining Theorem \ref{theo:rigidty.link} and Lemma \ref{lemm:graph}, we obtain the following rigidity result.
\begin{theo}\label{theo:rigidty.link*}
Given a closed embedded minimal submanifold $M^n \subset \SS^{n+q}$ such that all Jacobi fields of $M$ are Killing Jacobi fields. For a constant $\eps_3\in(0,1)$, there exists a constant $\sigma>0$ depending only on $n,q$ and $M$ such that if  $V$ is a stationary integral $n$-varifold  in $\SS^{n+q}$ with $\mu_{V} (\SS^{n+q})\leq (2-\eps_3)\cH^n(M)$ and $ d_{\cH}(M,\spt V ) \leq \sigma,$ then $V = |\Sigma|$ where $\Sigma$ is a rigid motion
   of $M$ in $\SS^{n+q}.$
\end{theo}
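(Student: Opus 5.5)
The argument chains Lemma \ref{lemm:graph} and Theorem \ref{theo:rigidty.link}. Fix $\eps_3\in(0,1)$. Let $\delta=\delta(n,q,M)\in(0,\eps_1]$ be the constant of Theorem \ref{theo:rigidty.link}. The first step is to note that the last conclusion of Lemma \ref{lemm:graph} can be made quantitative. For any $\eta>0$ there is a $\rho=\rho(n,q,\eps_3,M,\eta)>0$ with the following property: if $V$ is a stationary integral $n$-varifold with $\mu_V(\SS^{n+q})\leq(2-\eps_3)\cH^n(M)$ and $d_{\cH}(M,\spt V)\leq\rho$, then $V=|\Sigma|$ and $\Sigma=G_M(X)$ with $\sum_{i=0}^{2}\abs{(\df)^iX}\leq\eta$. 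This is exactly what the contradiction proof of Lemma \ref{lemm:graph} gives. Along a sequence with $d_{\cH}\to0$, the Claim shows $V_k\weakto|M|$. Allard's regularity theorem, applied to the cones $CV_k\weakto|CM|$ near points of $M$, then yields graphs over $M$ with $C^{2}$ norms tending to $0$ (in fact $C^{2,\alpha}$, after bootstrapping through the elliptic system \eqref{eq:jacob.1}). So $\sum_{i\le2}\abs{(\df)^iX_k}\to0$, which contradicts any fixed lower bound $\eta$.

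Next, apply this with $\eta=\delta$ and set $\sigma:=\rho(n,q,\eps_3,M,\delta)$. If $V$ satisfies the hypotheses with $d_{\cH}(M,\spt V)\leq\sigma$, then $V=|\Sigma|$. Here $\Sigma=G_M(Y)$ is an embedded minimal submanifold with $\sum_{i=0}^2\abs{(\df)^iY}\leq\delta$. Since all Jacobi fields of $M$ are assumed to be Killing Jacobi fields, Theorem \ref{theo:rigidty.link} applies directly and gives $\Sigma=M\cdot\exp(A)$ for some $A\in\fs\fo(n+q+1)$, i.e.\ a rigid motion of $M$. The dependence of $\sigma$ on $\eps_3$ should be recorded; the statement lists only $n,q,M$, but $\eps_3$ is fixed beforehand.

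The main obstacle is the first step, namely making sure Allard's theorem gives smallness in $C^2$ (not just $C^{1,\alpha}$) of the graph function, uniformly on the compact $M$. I would first obtain $C^{1,\alpha}$ smallness from Allard, using multiplicity one and density close to $1$ by the Claim together with upper semicontinuity of density. I would then use the minimal surface system \eqref{eq:jacob.1} and the Schauder estimates \eqref{ineq.schauder.estimate} to upgrade to $C^{2,\alpha}$ smallness. A minor point is that Lemma \ref{lemm:equation.graph} needs $\ell+2$ derivatives small, but once $C^{1,\alpha}$ smallness holds, interior elliptic regularity on the closed manifold supplies all higher ones.
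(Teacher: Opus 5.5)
Your proposal is correct and is essentially the paper's argument. The paper obtains this theorem simply by combining Lemma~\ref{lemm:graph} with Theorem~\ref{theo:rigidty.link}; the lemma's conclusion already states that the $C^2$-size of the graph function is small depending on $\rho$, and you make this explicit by choosing $\sigma=\rho$ so that this size is at most the $\delta$ of the theorem. Your extra remarks fill in details the paper leaves implicit: upgrading Allard's $C^{1,\alpha}$ control to $C^{2,\alpha}$ via Schauder estimates for the minimal surface system, and noting that $\sigma$ must also depend on $\eps_3$.
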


\section{Asymptotic to Lawson--Osserman Cone}
     
We shall calculate various geometric quantities for  graphs over $(n+1)$-dimensional regular minimal cone $\BC \subset  \RR^{n+q+1}.$  We denote $\BC_r := \BC \setminus \bar{ B_r(0)}.$ Let $M$ be the link of the minimal cone $\BC$ which is a closed embedded $n$-dimensional minimal submanifold in $ \SS^{n+q}.$ We can parametrize $\BC_1$ by $M \times \RR^+$, i.e., 
 \begin{gather}
     \tilde F_0: M \times \RR^+ \to \BC_1,\, 
     \tilde F_0(p,t) = e^tp \,.
 \end{gather}
 Then for each $x\in \BC_1,$ we can write $x = e^tp$, where $e^t = \abs{x}$ and $p = \fh{x}{\abs{x}}.$
 We denote $N\BC_1$ the normal bundle of $\BC_1$ in $\RR^{n+q+1},$ and denote $NM$ the normal bundle of $M$ in $\SS^{n+q}.$ Since $NM$ is a vector bundle of rank $q$ with the base space $M$ and $\RR^+$ is a vector bundle of rank 0 with the base space $\RR^+$ , we denote $NM\times \RR^+$ as the product vector bundle of rank $q$ with the base space $M\times \RR^+.$
    For each $Z \in \Gamma(N\BC_1),$ $Z(x)  =Z(e^tp) = e^t X(p,t) $ as vectors in $\RR^{n+q+1}$ with unique $X(p,t) \in N_{p}M\times \{t\}$ for each $e^tp = x \in \BC_1.$ So we may identify $e^tX$ as a section of $N\BC_1$ for every $X\in \Gamma(NM\times \RR^+).$ 
     We denote the map \begin{gather}
 G_{\BC_{R}} : N\BC_R \to   \RR^{n+q+1}, G_{\BC_{R}}(e^tX(p,t)) = e^t\fh{p+X(p,t)}{\sqrt{1+\abs{X(p,t)}^2}}
 \end{gather}
 for every $e^tX\in \Gamma(N\BC_R).$ If  $\sum_{ \substack{0\leq i,j\\ i+j\leq 2}} \abs{(\d^\bot)^i \fh{\de^j}{\de t^j}X}$ is sufficiently small, then $G_{\BC_R}(e^tX)$ is an embedded $(n+1)$-dimensional submanifold in $\RR^{n+q+1}.$
  As below, we may take $R=1.$ Let $e^tX\in \Gamma(N\BC_1)$ with $\sum_{ \substack{0\leq i,j\\ i+j\leq 2}} \abs{(\d^\bot)^i \fh{\de^j}{\de t^j}X}$ sufficiently small, then we define   a map $\tilde F_X: M \times \RR^+  \to \RR^{n+q+1} $  by
\begin{equation} \label{eq:cone.exp}
    \tilde F_X(p,t) = G_{\BC_{R}}(e^tX(p,t)) = e^t \fh{p+X(p,t)}{\sqrt{1+\abs{X(p,t)}^2}}\,.
\end{equation}

Let the symbol '*' denote the linear combination,
 and let $X_{ij \alpha \beta}^{k}(i,j, \alpha, \beta\geq 0, 0\leq i +j \leq k,0\leq \alpha+\beta \leq k)$ be smooth tensor functions depending on $(\df)^{s} \fh{\de^\ell}{\de t^\ell}X(s,\ell\geq 0, 0\leq s+\ell \leq k)$, $n ,q$ and $M,$ with all derivatives of  $X_{ij \alpha \beta}^{k}$ uniformly bounded. Then we define  general vector functions
  \begin{gather}
      \tilde Q_k(X)=  \sum_{\substack{0\leq i,j,\alpha,\beta\\i+j\leq k \\\alpha+\beta \leq k
      }}\left((\df )^i \fh{\de^j}{\de t^j}X* (\df )^\alpha\fh{\de^\beta}{\de t^\beta}X*X_{ij\alpha\beta}^{k} 
      \right) \qquad\text{for each } k\geq 0,
  \end{gather}
satisfying that 
 \begin{align}  \label{cone.high.est}
 \sum_{0\leq i \leq \ell}\abs{\bar \d^i\tilde Q_k(X) } \leq  c_\ell \left( \sum_{\substack{0\leq i,j\\
 i+j\leq \ell+k}} \abs{(\df)^{i} \fh{\de^j}{\de t^j}X} \right)^2 \end{align}
for some constant $c_\ell >0$ depending only  on $n,q,k,\ell,$ and $M$, where $\bar \d$ denotes the full gradient of $\tilde{Q}_k(X)$ on the vector bundle $NM\times \RR^+$.

\begin{lemm} \label{lemm:cone.equation.graph}
Given an (n+1)-dimensional regular cone  in $ \RR^{n+q+1}$ with link $M$ and a fixed $\ell \in \NN$, there exists a sufficiently small $\kappa_1>0$  depending only on $\ell, n,q$ and $M$, such that
if $  X  \in \Gamma(NM\times \RR^+)$ with
\begin{equation} \label{eq:cone.small.graph.funtion}
     \sum_{ \substack{0\leq i,j\\ i+j\leq \ell+2}} \abs{(\d^\bot)^i \fh{\de^j}{\de t^j}X}\leq \kappa_1 \,,
\end{equation}
 and  $\Sigma = \tilde{F}_X(M\times \RR^+)$ is minimal in $\RR^{n+q+1}\setminus \bar{B_1(0)}$, then
\begin{align} \label{eq:cone.jacob.1}
X_{tt} + (n+1)X_{t} + \la^{\bot}X +\tilde{B}(X)+nX =\tilde Q_2(X) \,, 
\end{align}
where $\tilde Q_2(X)$ satisfies \eqref{cone.high.est} with $k = 2$. 
\end{lemm}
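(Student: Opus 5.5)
The plan is to reduce Lemma \ref{lemm:cone.equation.graph} to the spherical statement, Lemma \ref{lemm:equation.graph}, using the conformal structure of the cone. Write $x=e^tp$, so that $\RR^{n+q+1}\setminus\{0\}$ with the Euclidean metric equals $e^{2t}(dt^2+g_{\SS^{n+q}})$. In the cylinder $(\RR\times\SS^{n+q}, dt^2+g_{\SS^{n+q}})$, the surface $\tilde F_X(M\times\RR^+)$ becomes the graph $(t,p)\mapsto\big(t,(p+X)/\sqrt{1+|X|^2}\big)$ over $\RR^+\times M$. The cone $\BC_1$ itself becomes the product $\RR^+\times M$, which is minimal in the cylinder.

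\textbf{Step 1: the conformally rescaled area functional.} Minimality in $\RR^{n+q+1}$ is equivalent to criticality of the functional
\[
\int e^{(n+1)t}\,d\mu_{\mathrm{cyl}},
\]
that is, of the weighted area in the cylinder metric. For the graph, write this as
\[
\int_{\RR^+\times M} e^{(n+1)t}\,\mathcal A(p,X,\df X,X_t)\,d\mu_M\,dt,
\]
where $\mathcal A$ is the area density of the graph in the cylinder.

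\textbf{Step 2: expand the density to second order.} Split the cylinder density into its $M$-part and its $t$-part. The terms involving only $X$ and $\df X$ are exactly the spherical graph density for $F_X$. Its second variation gives $\la^\bot X+\tilde B(X)+nX$, where the $nX$ term comes from the curvature of $\SS^{n+q}$ together with the normalization $\sqrt{1+|X|^2}$, exactly as in Lemma \ref{lemm:equation.graph}. The $dt$ direction contributes $\tfrac12|X_t|^2$ plus cubic terms. There are no mixed second-order cross terms, because $\partial_t$ is orthogonal to $M$ and $X$ is normal.

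\textbf{Step 3: the Euler--Lagrange equation.} The Euler--Lagrange equation of $e^{(n+1)t}\big(\tfrac12|X_t|^2-\tfrac12\mathcal Q_M(X)\big)$ is
\[
e^{-(n+1)t}\partial_t\big(e^{(n+1)t}X_t\big)+\la^\bot X+\tilde B(X)+nX=0,
\]
which is the left side of \eqref{eq:cone.jacob.1}. The first-order term vanishes because $\BC$ is minimal, so $M$ is minimal in the sphere.

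\textbf{Step 4: control of the remainder.} All remaining terms come from the cubic and higher part of $\mathcal A$. Their Euler--Lagrange contributions are quadratic in $(X,\df X,(\df)^2X,X_t,X_{tt},\df X_t)$, with coefficients that are smooth functions of the $2$-jet. The weight $e^{(n+1)t}$ only produces the factor $(n+1)$ multiplying $t$-derivatives, so it does not change this structure. Dividing by the nondegenerate leading coefficient, which is invertible since the $2$-jet is at most $\kappa_1$, puts the equation in the form \eqref{eq:cone.jacob.1} with remainder $\tilde Q_2(X)$. The derivative bound \eqref{cone.high.est} then follows by differentiating the smooth coefficients up to order $\ell$, using the smallness \eqref{eq:cone.small.graph.funtion}.

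\textbf{Main obstacle.} The delicate point is verifying that the linearization in the $t$-direction is exactly $X_{tt}+(n+1)X_t$, with no extra zeroth-order term. Here the $nX$ term arises entirely from the spherical part, since the dilation $e^t$ is an exact symmetry of the cone. I would check this by testing on the exact solutions $X\equiv0$ and on infinitesimal dilation and rotation fields. Alternatively, I would cite the analogous computation in \cite[\S4]{isolated.singularity}, which is done in precisely this $(t,p)$ parametrization.
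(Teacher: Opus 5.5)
Your argument is correct. The paper gives no derivation of its own and simply defers to Simon \cite[\S 4, 4.14]{isolated.singularity} (the reference you name as a fallback), and your Euler--Lagrange computation for the weighted area $\int e^{(n+1)t}\,d\mu_{\mathrm{cyl}}$ in the $(t,p)$ variables is the standard derivation behind that citation. That includes your key observation that there is no quadratic $\langle X, X_t\rangle$-type cross term, so the weight contributes only $(n+1)X_t$.

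One small correction in Step 4: nothing needs to be inverted. You only divide by the scalar $e^{(n+1)t}$, then move into $\tilde Q_2(X)$ the difference between the principal coefficients and their values at $X=0$, multiplied by the second derivatives of $X$.
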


Lemma \ref{lemm:cone.equation.graph} is classical, and it follows from \cite[$\S 4$, 4.14]{isolated.singularity}.
 
As below, we denote $\d$ as the Levi--Civita connection of the Riemannian manifold $(M^n\times \RR^+,g_M+dt^2).$ 
Let $Y\in C^2(M\times \RR^+;\RR^m)$ and we denote $\tilde \cQ(Y)$ as the quadratic nonlinear term, i.e.,
\begin{gather}\tilde \cQ(Y) = \sum_{ i,j=0}^2\d^i Y*\d^j Y*Z_{i j},\end{gather}
where $Z_{ij}$ are smooth tensor functions depending only on $Y,\d Y,\d^2Y,$    and $n,m,$ $M$. Moreover,  all the derivatives of  $\tilde \cQ(Y)$  are uniformly bounded, i.e.,
\begin{align}  
 \sum_{0\leq i \leq \ell}\abs{ \d^{i}\tilde \cQ(Y) } \leq  c_\ell \left( \sum_{i=0}^{\ell+2} \abs{\d^{i}Y} \right)^2 \end{align}
 for some constant $c_\ell >0$ depending only on $\ell,$  $n,m,$ and $M$.

  As below, we assume the cone $\BC$ is the Lawson--Osserman cone and $M$ is the link of $\BC$ in $\SS^6$. From \eqref{eq.isom.link}, $ (M,g_M)$ is isometric to  $(\cS^3, \fh{8}{3} g_{\fh{1}{6}}).$ 
 Let $\{e_4,e_5,e_6\}$ be the orthonormal frame of the normal bundle of $M$ in $\SS^6$ defined in \eqref{subsec.frame}. We define $X=\sum_{i=1}^3\xi_ie_{i+3} \in\Gamma(NM\times \RR^+),$
where $\xi_i \in C^2(M \times \RR^+)$.
From \eqref{jcb1} and \eqref{oper.L}, $X$ satisfies the system 
\begin{gather}\label{rigidty.cone.jacobi1}
    X_{tt}+ 4X_t+\la^{\bot}X+\tilde B(X)+3X = \tilde Q_2(X)\,.
\end{gather}
Then we define a vector field $\tilde{X}$ associated with $X$ by $\tilde{X}:= (\xi_1,\xi_2,\xi_3)\in C^2(\SS^3\times\RR^+;\RR^3)$. Then $\tilde{X}$  satisfies the system
\begin{gather}\label{cone.frame.eq.sys}
   \tilde{\cL}\tilde{X}:= \tilde{X}_{tt} +4\tilde{X}_t-\fh{3}{8}L(\tilde{X}) = \tilde \cQ(\tilde{X})
\end{gather}
with  $\tilde \cQ( \tilde{X}) = \tilde Q_2(X)$, where the operator $L$ is defined in \eqref{oper.L}. We denote $\fh{8}{3}\lambda_k(k\in \NN)$ as the $k$-th eigenvalue (counted without multiplicity) of the operator $L$, i.e., \begin{gather}
     LY = \fh{8}{3}\lambda_k Y\, \quad \text{ for some }  Y \in C^\infty (\SS^3;\CC^3) \text{ with } Y\neq 0,
\end{gather}
and we denote $\tilde V_k$ as the corresponding eigenspace with  $\dim \tilde V_k<\infty$. So $\lambda_k$ is the $k$-th eigenvalue of the Jacobi operator of the link of Lawson--Osserman cone $\BC.$
Since $\tilde L$ (defined in \eqref{oper.L1}) can commute with $L,$  the subspace $\tilde V_k$ is also an invariant subspace of $\tilde L.$ Then we can decompose the space $\tilde V_k = \oplus_{i\geq 0}(\tilde V_k\cap V_{i})$ with $V_i(i \in \NN)$ defined in \eqref{space.Vk}, and the sum is finite since $\dim \tilde V_k<\infty$.
We denote $\{\Phi_{k,i,\alpha}\}_{\alpha}$ the 
orthonormal basis of the space $\tilde V_k\cap V_{i}$ for each fixed $k,i \in\NN$, i.e.,  $$\dashint_{\SS^3}
\ang{\Phi_{k_1,i_1,\alpha_1}\,,\Phi_{k_2,i_2,\alpha_2}} d\mu_{g}  = \delta_{k_1k_2}\delta_{i_1i_2}\delta_{\alpha_1\alpha_2}\,.$$ 

     We shall investigate the decay order of the solution $X$ to \eqref{rigidty.cone.jacobi1}. 
Let  $\lambda_{k,\pm} = -2\pm \sqrt{4+\lambda_k},$ and we have 
    \begin{gather}
    \lambda_{2,+} = 0,\lambda_{1,+}=-1\,,\lambda_{0,+} = -\fh{3}{2}\,,\lambda_{0,-} = -\fh{5}{2}\,,\lambda_{1,-} = -3\,,\lambda_{2,-} = -4 .\end{gather}
    Let 
\begin{gather}
\xi_{k,i,\alpha}(t) = \dashint_{\SS^3} \ang{\tilde{X}(x,t), \Phi_{k,i,\alpha}(x)} d\mu_{g}
  \text{ and }
f_{k,i,\alpha}(t)=\dashint_{\SS^3} \ang{\tilde \cQ(\tilde{X})(x,t),
\Phi_{k,i,\alpha}(x)} d\mu_{g}\,.
\end{gather}
Hence $\xi_{k,i,\alpha}$ and $f_{k,i,\alpha}$ are smooth functions on $\RR^+\,,$
and from \eqref{cone.frame.eq.sys}, they satisfy
\begin{gather}
\xi_{k,i,\alpha}''+4\xi_{k,i,\alpha}'+\lambda_k\xi_{k,i,\alpha}= f_{k,i,\alpha}\,.
\end{gather}
From the standard ordinary differential equation theory, we can write
\begin{gather}\label{ode.eq1}
\xi_{k,i,\alpha}(t)=a_{k,i,\alpha}e^{t\lambda_{k,+}} +
b_{k,i,\alpha}e^{t\lambda_{k,-}}+ r_{k,i,\alpha}(t)\,,
\end{gather}
with $a_{k,i,\alpha}, b_{k,i,\alpha}\in \CC\,.$ 
So we have  \begin{gather}
\begin{aligned}
\tilde{X}(x,t) =& \sum_{k,i,\alpha}\xi_{k,i,\alpha}(t)\Phi_{k,i,\alpha}(x)\\
=&\sum_{k,i,\alpha}(a_{k,i,\alpha}e^{t\lambda_{k,+}} +
b_{k,i,\alpha}e^{t\lambda_{k,-}}+ r_{k,i,\alpha}(t))\Phi_{k,i,\alpha}(x)\,.
\end{aligned}
\end{gather}

\begin{prop}\label{prop.improv}
Fix a constant $\beta>0$. Let $\tilde{X}$ be a solution of \eqref{cone.frame.eq.sys} on $\SS^3 \times \RR^+ $ satisfying
that  $\|\tilde{X}\|_\beta<\infty$ and  $\lambda_{k,\pm} \neq -2\beta $ for all $k \geq 0\,$, then there exists a sufficiently small constant $\kappa_2\in (0,\kappa_1]$ such that if $\norm{\tilde{X}}_{2,0}<\kappa_2\,,$ then $\tilde{X}$ can be written as
$\tilde{X}=X_1+ R_1\,,$ where $\|X_1\|_\beta <\infty$ satisfies that  $\tilde \cL X_1 =0$ and
$\|R_1\|_{2\beta}<\infty\,$.
\end{prop}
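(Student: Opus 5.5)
The plan is the standard Simon-type decay improvement: treat the quadratic term $\tilde\cQ(\tilde X)$ as a source that decays like $e^{-2\beta t}$, solve the projected ODEs for each eigenmode with the source, and collect the homogeneous parts into $X_1$.

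\textbf{Step 1: bound on the source.} By interior Schauder estimates for the elliptic system \eqref{cone.frame.eq.sys} on unit-size cylinders $\SS^3\times[t-1,t+2]$, the smallness $\norm{\tilde X}_{2,0}<\kappa_2$ together with $\|\tilde X\|_\beta<\infty$ gives $\|\tilde X\|_{4,\beta}\le C\|\tilde X\|_\beta$. Here one applies the estimate to the linear system with coefficients frozen at $\tilde X$; ellipticity holds since the perturbation is small. The quadratic structure of $\tilde\cQ$ then yields
\[
\|\tilde\cQ(\tilde X)\|_{2\beta}\le C\|\tilde X\|_{2,\beta}^2<\infty .
\]
Consequently $|f_{k,i,\alpha}(t)|\le Ce^{-2\beta t}$. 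In fact $\sum_{k,i,\alpha}|f_{k,i,\alpha}(t)|^2\le Ce^{-4\beta t}$ by Parseval.

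\textbf{Step 2: solve each mode explicitly.} For each $(k,i,\alpha)$ I will write the particular solution by variation of constants, choosing integration endpoints according to the sign of the characteristic roots relative to $-2\beta$:
\[
r_{k,i,\alpha}(t)=\frac{1}{\lambda_{k,+}-\lambda_{k,-}}\Big(\int_{t_\pm}^{t} e^{\lambda_{k,+}(t-s)}f(s)\,ds-\int_{t'_\pm}^{t} e^{\lambda_{k,-}(t-s)}f(s)\,ds\Big).
\]
For a root $\lambda<-2\beta$ I take the lower limit $0$. For a root $\lambda>-2\beta$ I take the integral from $\infty$, which converges because $f=O(e^{-2\beta s})$. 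In both cases $|r|\le Ce^{-2\beta t}/|\lambda+2\beta|$. The hypothesis $\lambda_{k,\pm}\neq-2\beta$ is exactly what makes this work, and the resulting constants are uniform in $k$ because $\lambda_{k,\pm}\to\pm\infty$. When $\lambda_{k,+}=\lambda_{k,-}$ (that is, $\lambda_k=-4$), I will use the kernel $(t-s)e^{-2(t-s)}$ instead; since $-2\neq-2\beta$ unless $\beta=1$, which the hypothesis excludes, the same bound holds.

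The difference $\xi_{k,i,\alpha}-r_{k,i,\alpha}$ solves the homogeneous ODE, so it equals $a e^{\lambda_{k,+}t}+b e^{\lambda_{k,-}t}$. Set $R_1=\sum r_{k,i,\alpha}\Phi_{k,i,\alpha}$ and $X_1=\tilde X-R_1$. Then $\tilde\cL X_1=0$ holds mode by mode.

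\textbf{Step 3: the main obstacle, pointwise control.} This is passing from modewise bounds to a pointwise sup bound $\|R_1\|_{2\beta}<\infty$. First I will get the $L^2(\SS^3)$ bound $\|R_1(\cdot,t)\|_{L^2}\le Ce^{-2\beta t}$ by summing the squares of the modewise bounds. Uniformity holds since $\inf_k|\lambda_{k,\pm}+2\beta|>0$, as only finitely many $\lambda_k$ lie near any value.

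Next I will upgrade to a sup bound. The function $R_1$ solves the linear equation $\tilde\cL R_1=\tilde\cQ(\tilde X)$. Applying $L^2$ and Schauder estimates on $\SS^3\times[t-1,t+1]$ gives $|R_1(x,t)|\le C(\sup_{[t-1,t+1]}\|R_1\|_{L^2}+\|\tilde\cQ\|_{C^{0,\alpha}})\le Ce^{-2\beta t}$. This needs the $C^{0,\alpha}$ bound on $\tilde\cQ$, which follows from Step 1 with one more derivative. Near $t=0$ I will use a half-cylinder boundary estimate, or simply note that boundedness on $[0,1]$ suffices.

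Finally, $\|X_1\|_\beta\le\|\tilde X\|_\beta+\|R_1\|_\beta<\infty$, since $2\beta>\beta$.
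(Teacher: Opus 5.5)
Your proof is correct in outline and takes a different route from the paper. One step does not go through as literally written, and the treatment near $t=0$ needs tightening.

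\textbf{Comparison with the paper.} The paper expands all of $\tilde X$, homogeneous parts included, in the eigenbasis. It takes the per-mode splitting $\xi_{k,i,\alpha}=a_{k,i,\alpha}e^{t\lambda_{k,+}}+b_{k,i,\alpha}e^{t\lambda_{k,-}}+r_{k,i,\alpha}$ from Mazet's ODE lemma, which plays the role of your Step 2. Its $X_1$ is the explicit \emph{finite} sum of homogeneous modes with rates in $[-2\beta,-\beta]$. The faster homogeneous modes go into $R_1$ together with all the $r_{k,i,\alpha}$. Pointwise convergence of the infinite sums comes from pairing $\norm{\Phi_{k,i,\alpha}}_\infty\le c_2\gamma_i^{1/2}$ with coefficient decay $(1+\gamma_i)^{-a}$ for large $a$, obtained by integrating by parts against high derivatives of $\tilde X$ and $\tilde\cQ(\tilde X)$.

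You put only the particular solutions into $R_1$ and set $X_1:=\tilde X-R_1$, so the homogeneous part never has to be summed. You replace eigenfunction sup-norm bounds by orthogonality in $L^2(\SS^3)$ plus interior estimates for the linear equation $\tilde\cL R_1=\tilde\cQ(\tilde X)$. This needs less (no Sogge-type bounds, only about four derivatives of $\tilde X$) and proves exactly the proposition as stated. What it gives up is the explicit finite form of $X_1$, which the decay theorem that follows actually uses. To recover it, note that modes of $X_1$ with rate $>-\beta$ vanish because $\norm{X_1}_\beta<\infty$. The modes with rate $<-2\beta$ form a homogeneous solution that is $O(e^{-2\beta t})$ by the same $L^2$-plus-elliptic argument, so it can be moved into $R_1$.

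\textbf{The step to repair.} You already record the right ingredient, the Parseval bound at each fixed $s$, but ``summing the squares of the modewise bounds'' does not use it. Each modewise bound carries the constant $\sup_s e^{2\beta s}|f_{k,i,\alpha}(s)|$. Parseval controls $\sup_s\sum_{k,i,\alpha}$, not $\sum_{k,i,\alpha}\sup_s$; with the mode-independent constant $\norm{\tilde\cQ(\tilde X)}_{2\beta}$ from Step 1, the sum of squares is simply infinite. You need to dominate all kernels by one mode-independent kernel \emph{before} summing.

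Since $\lambda_k\ge\lambda_0=-\frac{15}{4}$, you have $|\lambda_{k,+}-\lambda_{k,-}|=2\sqrt{4+\lambda_k}\ge1$, so your coincident-root case never occurs. Set $\delta:=\inf_k\min_\pm|\lambda_{k,\pm}+2\beta|>0$. Then your formula gives $|r_{k,i,\alpha}(t)|\le 2\int_0^\infty K(t,s)|f_{k,i,\alpha}(s)|\,ds$ with
\[
K(t,s)=e^{-(2\beta+\delta)(t-s)}\mathbf 1_{\{s<t\}}+e^{(\delta-2\beta)(t-s)}\mathbf 1_{\{s>t\}} .
\]
Minkowski's integral inequality then yields $\norm{R_1(\cdot,t)}_{L^2}\le 2\int_0^\infty K(t,s)\norm{\tilde\cQ(\tilde X)(\cdot,s)}_{L^2}\,ds\le\frac{4}{\delta}\norm{\tilde\cQ(\tilde X)}_{2\beta}e^{-2\beta t}$.

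Alternatively, the two derivatives of $\tilde\cQ(\tilde X)$ you control by Step 1 give $\sup_s e^{2\beta s}|f_{k,i,\alpha}(s)|\le C(1+\gamma_i)^{-1}$. This is square-summable against $\dim V_i=3(i+1)^2$, since $1+\gamma_i=(i+1)^2$.

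\textbf{Near $t=0$.} A half-cylinder estimate needs control of $R_1(\cdot,0)$, which you do not have. Instead, extend $\tilde\cQ(\tilde X)$ by zero to $t<0$. Your formulas still define $R_1$ there and solve $\tilde\cL R_1=\tilde\cQ(\tilde X)\mathbf 1_{\{t>0\}}$ on $\SS^3\times(-1,\infty)$, with $L^2$ norm bounded on $[-1,0]$ by the same kernel bound. Interior $W^{2,p}$ estimates then apply. The paper is equally loose at $t=0$, and in the application one can simply shift $t$.
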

\begin{proof}

Since  $\norm{\tilde{X}}_{2,0}<\kappa_2$ with $\kappa_2$ sufficiently small, the system in  \eqref{cone.frame.eq.sys} satisfies the Legendre--Hadamard condition \cite{Giaquita,Morrey}.  So the classical elliptic estimates give upper bounds on the
derivatives of $\tilde{X}$: more precisely, for any $\ell\in \NN\,$ and  any $s>1\,,$ there is a constant $C_\ell'$ independent of $s$
such that 
$$
\|\nabla^\ell \tilde{X} \|_{C^0(\SS^3\times [s,s+1])}\leq C_\ell'\|\tilde{X}\|_{C^0(\SS^3 \times [s-1,s+2])}\,.
$$
This implies that  $\|\d^\ell \tilde{X} \|_\beta<\infty\,$ for any $\ell\in \NN\,$. Since the term
$\tilde \cQ(\tilde{X})$ in \eqref{eq:cone.jacob.1} gathers all the nonlinear terms consisting of $\tilde{X}\,, \d \tilde{X}\, , \d^2\tilde{X}$  at least quadratic, we have
 $\|\d^\ell \tilde \cQ(\tilde{X})\|_{2\beta}<\infty$ for any $\ell \in \NN$.

Recall that $\dim V_i = 3(i+1)^2$ by Lemma \ref{spec}. Moreover, for all $i\geq 1\,,$ we have the following estimates of the
$L^\infty$ norm of the eigenfunctions (see \cite{Sogge}):
$$
\|\Phi_{k,i,\alpha}\|_\infty\le c_2\gamma_i^{\fh{1}{2}} \quad \text{ with }\gamma_i = i(i+2)\,\text{ and some fixed } c_2>1 \text{ independent of } i\,.$$
 Recall that  $\tilde L \Phi_{k,i,\alpha}=\gamma_i\Phi_{k,i,\alpha}\,$, where $\{\Phi_{k,i,\alpha}\}_{\alpha}$ is the 
orthonormal basis of the space $\tilde V_k\cap V_{i}$. Then, using integration by parts, for all $i\geq 1 $ and any $a\in\ZZ^+\,,$ we derive the following estimates:
\begin{align*}
|\xi_{k,i,\alpha}(s)| \leq \frac{\sup_{x\in \SS^3}|\nabla^{2a}\tilde{X}(x,s)|}
{(1+\gamma_i)^a}\, ,\quad \text{ and } \quad 
|f_{k,i,\alpha}(s)| \leq \frac{\sup_{x\in \SS^3}|\nabla^{2a}\tilde\cQ(\tilde{X})(x,s)|}
{(1+\gamma_i)^a}\,.
\end{align*}
Thus we get 
\begin{align*}
\|\xi_{k,i,\alpha}\|_\beta \leq \frac{\|\nabla^{2a}\tilde{X}\|_\beta}
{(1+\gamma_i)^a} \,,\quad \text{ and } \quad
\|f_{k,i,\alpha}\|_{2\beta} \leq \frac{\|\d^{2a}\tilde \cQ(\tilde{X})\|_{2\beta}}
{(1+\gamma_i)^a}\,.
\end{align*}
For $i=0\,,$ 
$$ \|f_{k,0,\alpha}\|_{2\beta}\leq \| \tilde \cQ(\tilde{X}) \|_{2\beta}\,.$$

Moreover, we have
$$
\xi_{k,i,\alpha}(t)=a_{k,i,\alpha}e^{t\lambda_{k,+}} +
b_{k,i,\alpha}e^{t\lambda_{k,-}}+ r_{k,i,\alpha}(t)
$$
with some estimates on the different terms(see Lemma 10 in Appendix A of \cite{Mazet1}). First we notice that
$|\lambda_{k,+}-\lambda_{k,-}| = $$|2\sqrt{4+\lambda_k}|$ and $|2\beta+\lambda_{k,\pm}|$ are uniformly
bounded from below by a positive constant and
$\frac{(2+|\lambda_{k,+}|^2+|\lambda_{k,-}|^2)^{1/2}}
{|\lambda_{k,+}-\lambda_{k,-}|}$ is uniformly bounded. Then, for all $i\geq 1\,,$ there
is a uniform constant $c_3$ independent of $k$ such that
\begin{align*}
\max(|a_{k,i,\alpha}|,|b_{k,i,\alpha}|)&\leq c_3 (
\|\xi_{k,i,\alpha}\|_\beta+ \|\xi'_{k,i,\alpha}\|_\beta+
\|f_{k,i,\alpha}\|_{2\beta})\\
&\leq c_3 \frac{\|\d^{2a}\tilde{X}\|_\beta+ \|\d^{2a+1}\tilde{X}\|_\beta+
\|\d^{2a}\tilde \cQ(\tilde{X})\|_{2\beta}}{(1+\gamma_i)^a}
\end{align*}
and  
\begin{align*}
\|r_{k,i,\alpha}\|_{2\beta} \leq c_3 \|f_{k,i,\alpha}\|_{2\beta} 
 \leq c_3 \|\tilde \cQ(\tilde{X})\|_{2\beta}\,.
\end{align*}
For $i=0\,,$ $$\max(|a_{k,0,\alpha}|,|b_{k,0,\alpha}|) \leq C'\, , \|r_{k,0,\alpha}\|_{2\beta} \leq C'\,.$$ 
If $\lambda_{k,+} > -\beta$, we have $\norm{e^{t\lambda_{k,+}}}_{\beta} = \infty$, so $a_{k,i,\alpha} = 0\,.$ If $\lambda_{k,-} > - \beta\,$, we  also have  $b_{k,i,\alpha} = 0\,.$ If $\lambda_{k,\pm}\leq -2\beta\,$, we have
$ \norm{e^{t\lambda_{k,\pm}}}_{2\beta} = 1$.
Finally we obtain the following equality:
\begin{gather} \label{eq.pro.sum}
\begin{aligned}
\tilde{X}(x,t)&=\sum_{-2\beta\leq \lambda_{k,+}\leq -\beta}\sum_{i,\alpha} a_{k,i,\alpha}
e^{t\lambda_{k,+}} \Phi_{k,i,\alpha}(x)
+\sum_{-2\beta\leq \lambda_{k,-} \leq -\beta}\sum_{i,\alpha} b_{k,i,\alpha} e^{t\lambda_{k,-}}
\Phi_{k,i,\alpha}(x)\\
&\quad+\sum_{\lambda_{k,+} < -2\beta}\sum_{i,\alpha} a_{k,i,\alpha}
e^{t\lambda_{k,+}} \Phi_{k,i,\alpha}(x)
+\sum_{\lambda_{k,-}< -2\beta}\sum_{i,\alpha}b_{k,i,\alpha} e^{t\lambda_{k,-}}
\Phi_{k,i,\alpha}(x)\\
&\quad+\sum_{k=0}^\infty \sum_{i,\alpha}r_{k,i,\alpha}(t) \Phi_{k,i,\alpha}(x)\, .
\end{aligned}
\end{gather}
We observe that the first two sums of \eqref{eq.pro.sum} are finite and are elements of the kernel of
$L$, so this is the expected function $X_1$. The Proposition follows from the \textbf{Claim} below.

 \textbf{Claim.} The remaining sums are convergent and have finite $2\beta $-norms. 

Let $I = \sum_{\lambda_{k,-}< -2\beta}\sum_{i,\alpha}b_{k,i,\alpha} e^{t\lambda_{k,-}}
\Phi_{k,i,\alpha}(x)$. In
the following computation, we use the fact that $\gamma_i = i(i+2)$
with $\dim V_i = 3(i+1)^2$ together with the $L^\infty$ estimates on $\Phi_{k,i,\alpha}\,.$ Let  $a \geq \fh{5}{2},$ then 
\begin{align*}
\|I\|_{2\beta }&\leq c_2\sum_{\lambda_{k,-}< -2\beta}\sum_{i\geq 1,\alpha} |b_{k,i,\alpha}|
 \gamma_i^{\fh{1}{2}} + \sum_{\lambda_{k,-}<-2 \beta}\sum_{\alpha}|b_{k,0,\alpha}|\\
&\leq c_2c_3\sum_{i\geq 1,\alpha} \frac{\|\d^{2a}\tilde{X}\|_\beta+
\|\d^{2a+1}\tilde{X}\|_\beta+
\|\d^{2a}\tilde \cQ(\tilde{X})\|_{2\beta}}{(1+\gamma_i)^a}
\gamma_i^{\fh{1}{2}} + 3C'\\
&\leq 3c_2c_3m(\|\nabla^{2a}\tilde{X}\|_\beta+ \|\d^{2a+1}\tilde{X}\|_\beta+
\|\d^{2a}\tilde \cQ(\tilde{X})\|_{2\beta})\sum_{i=1}^\infty (i+1)^{3-2a}+3C'\\
&<\infty\,.
\end{align*}
We can prove the other two sums in the \textbf{Claim} by the same method and we omit the details. This gives the
proof of the \textbf{Claim}.
\end{proof}

 Now, we consider the asymptotic behavior of minimal submanifolds converging to the Lawson--Osserman cone $\BC$ at infinity, 
and obtain the optimal decay rates for minimal submanifolds (with boundary) converging to $\BC$ as below. 
\begin{theo}
    If $V$ is an integral stationary 4-varifold in $\RR^7\setminus \bar{B_1(0)},$ and the Lawson--Osserman cone $\BC$ is the unique tangent cone of $V$ at infinity with multiplicity one, then there exists  $R>1$ so that $\spt V\setminus \bar{B_R(0)} = E+G_{\BC_R}(Z) $ with a constant vector $E\in \RR^7$, $Z \in \Gamma(N\BC_R)$ and $\abs{Z(x)} = O(|x|^{-\fh{1}{2}}).$  Moreover, if $\abs{Z(x)} = O(|x|^{-(\fh{1}{2}+\delta_0)})$ for some sufficiently small $\delta_0>0$, then $\abs{Z(x)} = O(|x|^{-\fh{3}{2}}).$
\end{theo}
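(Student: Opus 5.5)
First I use Allard's regularity theorem to get a graphical representation at infinity. Since $\BC$ is the unique tangent cone of $V$ at infinity with multiplicity one, the rescalings $V_r:=r^{-1}V$ converge to $|\BC|$ as $r\to\infty$. $\BC$ is regular away from the origin, so Allard's theorem applied on annuli $B_2\setminus \bar{B_{1/2}}$ shows the following. For $R$ large, $\spt V\setminus\bar{B_R(0)}=G_{\BC_R}(e^tX)$ for some $X\in\Gamma(NM\times\RR^+)$. Moreover, $\sum_{i+j\le \ell+2}\abs{(\df)^i\partial_t^jX}\to0$ as $t\to\infty$. In particular $\abs{Z(x)}=o(|x|)$, and after shifting $t$ we may assume $\norm{\tilde X}_{2,0}<\kappa_2$. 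Lemma \ref{lemm:cone.equation.graph} then shows that $\tilde X=(\xi_1,\xi_2,\xi_3)$ solves \eqref{cone.frame.eq.sys} on $\SS^3\times\RR^+$.

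Next I decay-bootstrap with Proposition \ref{prop.improv}. Theorem \ref{theo.dim} gives the only nonpositive eigenvalues of $\cJ_M$ as $-\fh{15}{4},-3,0$. The corresponding exponents are $\lambda_{0,\pm}=-\fh32,-\fh52$, $\lambda_{1,\pm}=-1,-3$ and $\lambda_{2,\pm}=0,-4$, and every other exponent satisfies $\lambda_{k,+}<-\fh32$ or so. The problem is that we only know $\tilde X\to0$, not $\norm{\tilde X}_\beta<\infty$ for some $\beta>0$. I would obtain an initial rate $\beta_0>0$ by a three-annulus/Simon type argument. Decompose $\tilde X$ along the spectrum of $\fh38L$. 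The modes with $\lambda_{k,+}>0$ are excluded because $\tilde X\to0$ while they grow. The $\lambda_{2,+}=0$ mode (the 17-dimensional Jacobi fields) would give non-decay, and I handle it with integrability. By Theorem \ref{theo.dim} all Jacobi fields are Killing, so they are generated by rotations. As in Lemma \ref{lemm.full.Jacobi.field}, the small rotation can be absorbed into the choice of graph at each scale. Uniqueness of the tangent cone then forces the accumulated rotation to converge, so the zero mode actually decays. Following Allard--Almgren/Simon for integrable cones, this should yield $\norm{\tilde X}_{\beta_0}<\infty$ for some small $\beta_0$. \textbf{This is the main obstacle}: making the integrability argument effective on the half-cylinder with the nonlinear error $\tilde\cQ$.

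With $\norm{\tilde X}_{\beta}<\infty$ and $\beta\notin\{-\lambda_{k,\pm}/2\}$, Proposition \ref{prop.improv} gives $\tilde X=X_1+R_1$, where $X_1$ is a finite sum of modes $e^{t\lambda_{k,\pm}}\Phi$ with $-2\beta\le\lambda_{k,\pm}\le-\beta$ and $\norm{R_1}_{2\beta}<\infty$. Iterating (perturbing $\beta$ slightly to avoid resonances), I obtain $\tilde X=e^{-t}\Phi_1+O(e^{-(3/2)t})$, where $\Phi_1\in\tilde V_1$, the eigenvalue $-3$ space. By Theorem \ref{theo.dim}, $\tilde V_1=\cV(M)$ consists of normal projections of constant vectors. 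Therefore $e^te^{-t}\Phi_1=(E)^N$ is exactly the first-order effect of translating $\BC$ by some $E\in\RR^7$.

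Finally I replace $V$ by $V-E$. Its graph function is then $O(e^{-(1/2+\delta)t})$ in the $X$ variable, i.e. $|Z|=O(|x|^{-1/2})$ after accounting for the factor $e^t$. This uses that the next exponent is $\lambda_{0,+}=-\fh32$. More precisely, in the original coordinates $Z-E^N$ has the size $e^{t}\cdot e^{-(3/2)t}=|x|^{-1/2}$, which is the first claim. For the second claim, the hypothesis $|Z|=O(|x|^{-(1/2+\delta_0)})$ means $\norm{\tilde X}_{3/2+\delta_0}<\infty$, which kills the $\lambda_{0,+}$ and $\lambda_{1,+}$ modes. Proposition \ref{prop.improv} with $\beta=\fh32+\delta_0$ then leaves modes with exponents in $[-3-2\delta_0,-\fh32-\delta_0]$. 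The largest available exponent there is $\lambda_{0,-}=-\fh52$, so $\tilde X=O(e^{-(5/2)t})$, i.e. $|Z|=O(|x|^{-3/2})$.
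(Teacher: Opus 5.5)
Your argument is essentially the paper's proof. The paper likewise gets the initial rate $X=O(e^{-\beta t})$ by combining integrability (Theorem \ref{theo.dim}) with Allard--Almgren and Simon, simply citing them for the step you flag as the main obstacle. It then iterates Proposition \ref{prop.improv} through $\lambda_{1,+}=-1$ (removed by a translation $E$, since the $-3$ eigenspace is $\cV(M)$), then $\lambda_{0,+}=-\fh{3}{2}$, and, under the extra hypothesis, $\lambda_{0,-}=-\fh{5}{2}$.

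Two harmless slips:
\begin{itemize}
\item For the positive eigenvalues one has $\lambda_{k,+}>0$ and $\lambda_{k,-}<-4$, not $\lambda_{k,+}<-\fh{3}{2}$.
\item After translating by $E$, the new graph function is $O(e^{-(1+\delta)t})$ in the $X$ variable, not $O(e^{-(1/2+\delta)t})$. This is because the translation changes $X$ by $-e^{-t}E^N$ plus terms of order $e^{-2t}$.
\end{itemize}
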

\begin{proof}
    Due to Theorem \ref{theo.dim}, the Lawson--Osserman cone is integrable. Then from Allard--Almgren \cite{allard.radial} and Simon \cite[p273, Theorem 6.6]{isolated.singularity},
 there exists $R>1$ so that the integral stationary 4-varifold $V$ can be
described as $G_{\BC_R}(e^tX)$ over $\BC_R$ satisfying \eqref{eq:cone.jacob.1} with $X= O(e^{-\beta t})$ for some $\beta>0$ and $\sum_{ \substack{0\leq i,j\\ i+j\leq \ell+2}} \abs{(\d^\bot)^i \fh{\de^j}{\de t^j}X}\leq \kappa_3$ sufficiently small (depending on $R$).  Therefore, $\tilde{X}\in C^2(\SS^3\times \RR^+;\RR^3)$ associated with $X$ satisfies 
 $\|\tilde{X}\|_{\beta}<\infty$.  Decreasing slightly $\beta$
if necessary, we can assume that $-2\beta \neq \lambda_{k,\pm}$ and apply
Proposition \ref{prop.improv}  to get 
$$\tilde{X} = X_1+R_1\,,$$ where $X_1$ is  in the kernel of $L$ with
decay order between $-\beta$ and $-2\beta$ and $\|R_1\|_{2\beta}<\infty\,$. If there are no
elements in the kernel of $L$ with decay order between $-\beta$ and $-2\beta\,$, we get
$\|\tilde{X}\|_{2\beta}<\infty\,$; in that case we have improved the decay order of $\tilde{X}\,$.
So we can iterate this argument until we get the first non-vanishing element in
the kernel. The first decay order of elements in the kernel is given by 
$\lambda_{1,+}=-1\,$. By Theorem \ref{theo.dim}, the second eigenvalue of the Jacobi operator $\cJ_M$ is with multiplicity $7$ and the corresponding eigenspace is $\cV(M).$ Then the  term $X_1$ can be interpreted as a translation. Thus, the translated submanifold
$(\spt V\setminus \bar{B_{R}(0)})-E$ can be expressed as $G_{\BC_R} (e^t X^*),$ with the corresponding $ \tilde{X^*}$ associated to $X^*$ satisfying $\norm{\tilde{X^*}}_{1+\delta}<\infty$ for some $\delta>0$. Thus, $\spt V\setminus \bar{B_R(0)}= E+G_{\BC_R}(Z) $, where $Z(x)= e^tX^*(p,t)$ for each $e^tp = x\in \BC_R.$

Then, we study the asymptotic behavior of $(\spt V\setminus \bar{B_{R}(0)})-E\,.$
By Proposition \ref{prop.improv},
we get the first non-vanishing element in
the kernel of $L$. Since the first decay order of elements in the kernel is given by
$\lambda_{0,+}=-\fh{3}{2}\,,$ we have $\norm{ \tilde{X^*}}_{\fh{3}{2}}<\infty $ and $\tilde{X^*} = O(e^{-\fh{3}{2}t}).$  From $|x|=e^t$ and $Z(x) =  e^t X^*(p,t) $ for each  $e^tp = x\in \BC_R$, we conclude that $|Z(x)| =  O(|x|^{-\fh{1}{2}}).$

If  $\abs{Z(x)} = O(|x|^{-(\fh{1}{2}+\delta_0)})$ for some $\delta_0>0$ small, then by the same argument as above, $\abs{Z(x)} = O(|x|^{-\fh{3}{2}})$ due to $\lambda_{0,-} = -\fh{5}{2}.$
\end{proof}

\clearpage

\appendix
\section{Connections on the tangent bundle and the normal bundle}\label{Appedix.A}

    We give  details of  calculations of connections. All frames in \eqref{subsec.frame} can be naturally extended to vector fields on $\RR^7.$
    Actually, we have
{\fontsize{10}{13}
\begin{gather} \label{eq.frame.polynomails}
    \begin{aligned}
        e_1=& \fh{3}{2}(-y_1,x_1,-y_2,x_2,0,0,0) , \\
        e_2 =& \sqrt{\fh{3}{8}}(-x_2,y_2,x_1,-y_1,-3\sqrt{5}(x_1x_2-y_1y_2),\fh{3\sqrt{5}}{2}(x_1^2-y_1^2-x_2^2+y_2^2),3\sqrt{5}(x_1y_1+x_2y_2))\,,\\
        e_3 = &\sqrt{\fh{3}{8}}(-y_2,-x_2,y_1,x_1,-3\sqrt{5}(x_1y_2+x_2y_1),3\sqrt{5}(x_1y_1-x_2y_2),\fh{3\sqrt{5}}{2}(-x_1^2-x_2^2+y_1^2+y_2^2))\,,\\
        e_4=&(\fh{\sqrt{5}}{2}x_1,\fh{\sqrt{5}}{2}y_1,\fh{\sqrt{5}}{2}x_2,\fh{\sqrt{5}}{2}y_2,-\fh{3}{2}(x_1^2+y_1^2 -x_2^2-y_2^2 ),-3(x_1x_2+y_1y_2),-3(x_2y_1- x_1y_2))\,,\\
        e_5 = & \sqrt{\fh{15}{8}} (-x_2,y_2,x_1,-y_1,\fh{3\sqrt{5}}{5}(x_1x_2-y_1y_2),-\fh{3\sqrt{5}}{10}(x_1^2-y_1^2-x_2^2+y_2^2),-\fh{3\sqrt{5}}{5}(x_1y_1+x_2y_2))\,,\\
        e_6 = & \sqrt{\fh{15}{8}} (-y_2,-x_2,y_1,x_1,\fh{3\sqrt{5}}{5}(x_1y_2+x_2y_1),-\fh{3\sqrt{5}}{5}(x_1y_1-x_2y_2),-\fh{3\sqrt{5}}{10}(-x_1^2-x_2^2+y_1^2+y_2^2))\,,\\
        \nu = &(x_1,y_1,x_2,y_2,\fh{3\sqrt{5}}{4}(x_1^2+y_1^2-x_2^2-y_2^2),\fh{3\sqrt{5}}{2}(x_1x_2+y_1y_2),\fh{3\sqrt{5}}{2}(x_2y_1-x_1y_2))\,.
    \end{aligned}
\end{gather}
}

Then we calculate the connection formulas in \eqref{eq.connection}. Due to \eqref{eq.isometry.action}, SU$(2)$ acts on $\RR^7 $ as a subgroup of SO$(7),$ and the link $M$ of Lawson--Osserman cone is invariant as subset in $\RR^7$ under the action of SU(2) by the group homomorphism $\Psi.$  So we only need to calculate the coefficient of connection formulas in \eqref{eq.connection} at the point $p_0= (\fh{2}{3}, 0,0,0,\fh{\sqrt{5}}{3},0,0)\in M.$ Therefore we can calculate the derivative of above vector fields in $\RR^7$ at the point $p_0$ 
to get \eqref{eq.connection}. At the point $p_0\,,$ $x_1 = \fh{2}{3},y_1= x_2 = y_2 =0.$ We denote the derivative in $\RR^7$ as $D.$ Then we have $$
D_{e_i}e_j|_{p_0} = \sum_{\ell=1}^6\ang{D_{e_i}e_j|_{p_0},e_\ell|_{p_0}}e_\ell|_{p_0}-\delta_{ij}\nu|_{p_0}\,, D_{e_i}e_k|_{p_0}= \sum_{\ell=1}^6\ang{D_{e_i}e_k|_{p_0},e_\ell|_{p_0}}e_\ell|_{p_0}
$$
for $1\leq i,j\leq 3\,,4\leq k\leq 6$ with $\ang{D_{e_i}e_j|_{p_0},e_k|_{p_0}} = -\ang{D_{e_i}e_k|_{p_0},e_j|_{p_0}},$ i.e., 
\begin{gather}
\begin{aligned}
D_{e_1}e_1|_{p_0} =& \pa{e_1}{y_1}\bigg|_{p_0}
=(-\fh{3}{2},0,0,0,0,0,0) &=& -\fh{\sqrt{5}}{2}e_4 -\nu \,,\\
D_{e_1}e_2|_{p_0} =& \pa{e_2}{y_1}\bigg|_{p_0} = (0,0,0,-\sqrt{\fh{3}{8}},0,0,\sqrt{\fh{15}{2}}) &=& -\fh{11}{4}e_3 + \fh{\sqrt{5}}{4}e_6\,,\\
D_{e_1}e_3|_{p_0} =& \pa{e_3}{y_1}\bigg|_{p_0} = (0,0,\sqrt{\fh{3}{8}},0,0,\sqrt{\fh{15}{2}},0) &=& \fh{11}{4}e_2 - \fh{\sqrt{5}}{4}e_5\,,\\
D_{e_1}e_4|_{p_0} =& \pa{e_4}{y_1}\bigg|_{p_0} = (0,\fh{\sqrt{5}}{2},0,0,0,0,0) &=& \fh{\sqrt{5}}{2}e_1 \,,\\
D_{e_1}e_5|_{p_0} =& \pa{e_5}{y_1} \bigg|_{p_0} =(0,0,0,-\sqrt{\fh{15}{8}},0,0,-\sqrt{\fh{3}{2}}) &=& \fh{\sqrt{5}}{4}e_3-\fh{7}{4}e_6\,,\\
D_{e_1}e_6|_{p_0} =& \pa{e_6}{y_1} \bigg|_{p_0} =(0,0,\sqrt{\fh{15}{8}},0,0,-\sqrt{\fh{3}{2}},0) &=& -\fh{\sqrt{5}}{4}e_2+\fh{7}{4}e_5\,;\\
\end{aligned}
\end{gather}
\begin{gather}
\begin{aligned}
D_{e_2}e_1|_{p_0} =& \fh{1}{\sqrt{6}} \pa{e_1}{x_2}\bigg|_{p_0}
=(0,0,0,\sqrt{\fh{3}{8}},0,0,0) &=& \fh{1}{4}e_3+\fh{\sqrt{5}}{4}e_6  \,,\\
D_{e_2}e_2|_{p_0} =& \fh{1}{\sqrt{6}} \pa{e_2}{x_2}\bigg|_{p_0}
=(-\fh{1}{4},0,0,0,-\fh{\sqrt{5}}{2},0,0)&=&\fh{\sqrt{5}}{4}e_4-\nu  \,,\\
D_{e_2}e_3|_{p_0} =& \fh{1}{\sqrt{6}} \pa{e_3}{x_2}\bigg|_{p_0}
=(0,-\fh{1}{4},0,0,0,0,0) &=& -\fh{1}{4}e_1 \,,\\
D_{e_2}e_4|_{p_0} =& \fh{1}{\sqrt{6}} \pa{e_4}{x_2}\bigg|_{p_0}
=(0,0,\sqrt{\fh{5}{24}},0,0,-\sqrt{\fh{2}{3}},0) &=& -\fh{\sqrt{5}}{4}e_2 +\fh{3}{4}e_5 \,,\\
D_{e_2}e_5|_{p_0} =& \fh{1}{\sqrt{6}} \pa{e_5}{x_2}\bigg|_{p_0}
=(-\fh{\sqrt{5}}{4},0,0,0,\fh{1}{2},0,0) &=& -\fh{3}{4}e_4 \,,\\
D_{e_2}e_6|_{p_0} =& \fh{1}{\sqrt{6}} \pa{e_6}{x_2}\bigg|_{p_0}
=(0,-\fh{\sqrt{5}}{4},0,0,0,0,0) &=& -\fh{\sqrt{5}}{4}e_1\,;\\
\end{aligned}
\end{gather}
\begin{gather}
\begin{aligned}
D_{e_3}e_1|_{p_0} =& \fh{1}{\sqrt{6}} \pa{e_1}{y_2}\bigg|_{p_0}
=(0,0,-\sqrt{\fh{3}{8}},0,0,0,0) &=& -\fh{1}{4}e_2-\fh{\sqrt{5}}{4}e_5  \,,\\
D_{e_3}e_2|_{p_0} =& \fh{1}{\sqrt{6}} \pa{e_2}{y_2}\bigg|_{p_0}
=(0,\fh{1}{4},0,0,0,0,0) &=& \fh{1}{4}e_1    \,,\\
D_{e_3}e_3|_{p_0} =& \fh{1}{\sqrt{6}} \pa{e_3}{y_2}\bigg|_{p_0}=
(-\fh{1}{4},0,0,0,-\fh{\sqrt{5}}{2},0,0) &=& \fh{\sqrt{5}}{4}e_4-\nu\,,\\
D_{e_3}e_4|_{p_0} =& \fh{1}{\sqrt{6}} \pa{e_4}{y_2}\bigg|_{p_0}
=(0,0,0,\sqrt{\fh{5}{24}},0,0,\sqrt{\fh{2}{3}}) &=& -\fh{\sqrt{5}}{4}e_3 +\fh{3}{4}e_6 \,,\\
D_{e_3}e_5|_{p_0} =& \fh{1}{\sqrt{6}} \pa{e_5}{y_2}\bigg|_{p_0}
=(0,\fh{\sqrt{5}}{4},0,0,0,0,0) &=& \fh{\sqrt{5}}{4}e_1 \,,\\
D_{e_3}e_6|_{p_0} =& \fh{1}{\sqrt{6}} \pa{e_6}{y_2}\bigg|_{p_0}
=(-\fh{\sqrt{5}}{4},0,0,0,\fh{1}{2},0,0) &=& -\fh{3}{4}e_4\,.\\
\end{aligned}
\end{gather}

\section{Calculations about matrices} \label{appendix.b}
In this section, we give details about calculations of matrices in the proof of Theorem \ref{theo.dim}. Firstly, we calculate  characteristic polynomials of $L_1$ and $L_2$ by sequences of elementary row and column  operations. Then we calculate the signs of $L_3.$

\subsection{The characteristic polynomial of the matrix $L_1$}\label{app.l1}
From \eqref{matrix.rep.L_1}, we have
$$L_1 = \begin{bmatrix}
\begin{smallmatrix}
    -2I_4 & \sqrt{6}A_2  & \sqrt{6}A_3  \\
     -\sqrt{6}A_2   & 8I_4 & -14A_1 \\
     -\sqrt{6}A_3   &14A_1  &  8I_4    
\end{smallmatrix}
\end{bmatrix},\text{ and }  \lambda I_{12}-L_1 = \begin{bmatrix}
\begin{smallmatrix}
    (\lambda+2)I_4 & -\sqrt{6}A_2  & -\sqrt{6}A_3  \\
     \sqrt{6}A_2   & (\lambda - 8)I_4 & 14A_1 \\
     \sqrt{6}A_3   &  -14A_1  & (\lambda - 8)I_4 
     \end{smallmatrix}
\end{bmatrix} .$$
\begin{equation} \label{eq.matrix1}
\begin{aligned}
\text{Thus}, \abs{\lambda I_{12} -L_1} =& \begin{vmatrix}
\begin{smallmatrix}
    (\lambda+2)I_4 & -\sqrt{6}A_2 & -\sqrt{6}A_3 \\
    &(\lambda-8)I_4 +\fh{6}{\lambda+2}A_2^2 & 14A_1 + \fh{6}{\lambda+2} A_2A_3 \\
    & -14A_1 + \fh{6}{\lambda+2} A_3A_2 & (\lambda-8)I_4 +\fh{6}{\lambda+2}A_3^2
    \end{smallmatrix}
\end{vmatrix} \\
=& \begin{vmatrix}
\begin{smallmatrix}
    (\lambda+2)I_4 &  &  \\
    &  (\lambda-8 -\fh{6}{\lambda+2})I_4 & (14-\fh{6}{\lambda+2})A_1 \\
    & -(14-\fh{6}{\lambda+2})A_1 & (\lambda-8 -\fh{6}{\lambda+2})I_4
    \end{smallmatrix}
\end{vmatrix} \\
=&  \begin{vmatrix}
\begin{smallmatrix}
     (\lambda+2)I_4 & & \\
     &  (\lambda-8 -\fh{6}{\lambda+2})I_4 & \\
     & & ((\lambda-8 -\fh{6}{\lambda+2})-(\lambda-8 -\fh{6}{\lambda+2})^{-1}(14-\fh{6}{\lambda+2})^2 )I_4
     \end{smallmatrix}
\end{vmatrix} \\
 =& (\lambda+2)^4((\lambda-8 -\fh{6}{\lambda+2})^2-(14-\fh{6}{\lambda+2})^2) ^4 = (\lambda+8)^4\lambda^4(\lambda-22)^4 .  
\end{aligned}
\end{equation}

\subsection{The characteristic polynomial of the matrix $L_2$}\label{app.l2}
From \eqref{matrx.rep.l2}, we have $$L_2=\begin{bmatrix}
\begin{smallmatrix}
    T_2 -10 & \sqrt{6}A_2  & \sqrt{6}A_3  \\
     -\sqrt{6}A_2   & T_2& -14A_1 \\
     -\sqrt{6}A_3   &14A_1  &  T_2
     \end{smallmatrix}
\end{bmatrix},$$  and we first calculate $\det(L_2+(8-\lambda)I_{27}).$

$$L_2+(8-\lambda)I_{27} = \begin{bmatrix}
\begin{smallmatrix}
    S_2 -10I_9 & B_2  & B_3  \\
     -B_2   & S_2     & -14A_1 \\
     -B_3   &14A_1    &  S_2
     \end{smallmatrix}
\end{bmatrix},\text{ with } S_2 = \left[
\begin{smallmatrix}
   ( 36-\lambda) I_6 & \\
   & (16-\lambda)I_3
   \end{smallmatrix}
\right], 
A_1 = 2\sqrt{-1}\begin{bmatrix}
\begin{smallmatrix}
    I_3 & & \\
    & -I_3 &\\
    &   & O_3
    \end{smallmatrix}
\end{bmatrix},$$
$$
B_2 = \sqrt{6}A_2 = 2\sqrt{3}\left[
\begin{smallmatrix}
    O_6  & -\beta \\
    \beta^t & O_3
    \end{smallmatrix}\right]
,
\beta = \begin{bmatrix}
\begin{smallmatrix}
    0 & -1 & 0 \\
    1 & 0&0 \\
    0 & 0 & 1\\
    0 & 0 & -1 \\
    1 & 0 & 0\\
    0 & 1 & 0    
\end{smallmatrix}
\end{bmatrix},
B_3 = \sqrt{6} A_3 = 2\sqrt{-3}
\left[\begin{smallmatrix}
    O_6 & \gamma \\
    \gamma^t & O_3 
\end{smallmatrix}\right],  
\gamma = \begin{bmatrix}
    \begin{smallmatrix}
   0 & -1 & 0 \\
    1 & 0&0 \\
    0 & 0 & 1\\
    0 & 0 &1 \\
    -1 & 0 & 0\\
    0 & -1 & 0     
    \end{smallmatrix}
\end{bmatrix}. $$
Moreover, $\beta^t \gamma = O_3, \ \gamma^t \beta = O_3, \ \beta^t \beta = 2I_3,  \ \gamma^t\gamma = 2I_3 ,$
$$\gamma \beta^t = \begin{bmatrix}\begin{smallmatrix}
    1 & 0 & 0 & 0 & 0 & -1 \\
    0 & 1 & 0 & 0 & 1 & 0 \\
    0 & 0 & 1 & -1 & 0 & 0 \\
    0 & 0 & 1 & -1 & 0 & 0 \\
    0 & -1 & 0 & 0 & -1 & 0 \\
    1 & 0 & 0 & 0 & 0 & -1 \\    
\end{smallmatrix}
\end{bmatrix}, \
\beta\beta^t  = \begin{bmatrix}
\begin{smallmatrix}
    1 & 0 & 0 & 0 & 0 & -1 \\
    0 & 1 & 0 & 0 & 1 & 0 \\
    0 & 0 & 1 & -1 & 0 & 0 \\
    0 & 0 & -1 & 1 & 0 & 0 \\
    0 & 1 & 0 & 0 & 1 & 0 \\
    -1 & 0 & 0 & 0 & 0 & 1 \\
    \end{smallmatrix}
\end{bmatrix} , \
\gamma\gamma^t = \begin{bmatrix}
\begin{smallmatrix}
    1 & 0 & 0 & 0 & 0 & 1 \\
    0 & 1 & 0 & 0 & -1 & 0 \\
    0 & 0 & 1 & 1 & 0 & 0 \\
    0 & 0 & 1 & 1 & 0 & 0 \\
    0 & -1 & 0 & 0 & 1 & 0 \\
    1 & 0 & 0 & 0 & 0 & 1 \\
    \end{smallmatrix}
\end{bmatrix}.$$
Let $J= \left[
\begin{smallmatrix}
     0 & 0 & 1\\
     0 & -1 & 0 \\
     1 &0 & 0
  \end{smallmatrix}    
\right]$ with $ J^2 = I_3$ and $K= \left[\begin{smallmatrix}
    I_3 & \\
        & -I_3
\end{smallmatrix}\right].$ Then we have
$$
\gamma \beta^t = \left[\begin{smallmatrix}
    I_3 & -J \\
    J & -I_3
\end{smallmatrix}\right], \ \beta \gamma^t = \left[\begin{smallmatrix}
    I_3 & J \\
    -J & -I_3
\end{smallmatrix}\right],\
\beta\beta^t  = \left[\begin{smallmatrix}
    I_3 & -J \\
    -J & I_3
\end{smallmatrix}\right], \ \gamma\gamma^t = \left[\begin{smallmatrix}
    I_3 & J \\
    J & I_3
\end{smallmatrix}\right].  $$
Moreover, $
(S_2-10I_9)^{-1} =\left[ \begin{smallmatrix}
    (26-\lambda)^{-1}I_6 & \\
    & (6-\lambda)^{-1}I_3
\end{smallmatrix}\right] .
$ So we have
\begin{equation*}
\begin{aligned}
& \left[\begin{smallmatrix}
    B_2 \\
    B_3
\end{smallmatrix}\right]
(S_2-10I_9)^{-1}
\left[
\begin{smallmatrix}
    B_2 &
    B_3
\end{smallmatrix}\right] \\
=& 12\begin{bmatrix}
\begin{smallmatrix}
    O_6 & -\beta \\
    \beta^t & O_3\\
    O_6 & \sqrt{-1}\gamma \\
    \sqrt{-1} \gamma^t &O_3
    \end{smallmatrix}
\end{bmatrix} 
\left[
\begin{smallmatrix}
    (26-\lambda)^{-1}I_6 & \\
    & (6-\lambda)^{-1}I_3
    \end{smallmatrix}
\right]
\left[
\begin{smallmatrix}
    O_6 & -\beta & O_6 & \sqrt{-1} \gamma\\
     \beta^t & O_3 & \sqrt{-1} \gamma^t & O_3
     \end{smallmatrix}
\right]\\
=& 12\begin{bmatrix}
\begin{smallmatrix}
    O_6 & -\beta \\
    \beta^t & O_3\\
    O_6 & \sqrt{-1}\gamma \\
    \sqrt{-1} \gamma^t &O_3
    \end{smallmatrix}
\end{bmatrix}
\left[
\begin{smallmatrix}
     O_6 & -(26-\lambda)^{-1}\beta & O_6 & (26-\lambda)^{-1}\sqrt{-1} \gamma\\
     (6-\lambda)^{-1}\beta^t & O_3 & (6-\lambda)^{-1}\sqrt{-1} \gamma^t & O_3
     \end{smallmatrix}
\right]\\
=& 12\begin{bmatrix}
\begin{smallmatrix}
    -(6-\lambda)^{-1}\beta\beta^t &   & -(6-\lambda)^{-1}\sqrt{-1}\beta \gamma^t &  \\
     & -(26-\lambda)^{-1}\beta^t\beta &  & (26-\lambda)^{-1}\sqrt{-1}\beta^t\gamma \\
    (6-\lambda)^{-1}\sqrt{-1}\gamma\beta^t &  & -(6-\lambda)^{-1}\gamma\gamma^t & \\
     & -(26-\lambda)^{-1}\sqrt{-1}\gamma^t\beta &  & -(26-\lambda)^{-1} \gamma^t\gamma
     \end{smallmatrix}
\end{bmatrix}\\
=&12\begin{bmatrix}
\begin{smallmatrix}
    -(6-\lambda)^{-1}\beta\beta^t &   & -(6-\lambda)^{-1}\sqrt{-1}\beta \gamma^t &   \\
      & -2(26-\lambda)^{-1}I_3 &   & O_3 \\
    (6-\lambda)^{-1}\sqrt{-1}\gamma\beta^t &  & -(6-\lambda)^{-1}\gamma\gamma^t & \\
      & O_3 &  &  -2(26-\lambda)^{-1}I_3
      \end{smallmatrix}
\end{bmatrix}.
\end{aligned}
\end{equation*}
 Then we shall calculate $\det(L_2+(8-\lambda)I_{27}).$ Let $\lambda_1 = (36-\lambda-\fh{12}{6-\lambda}), \lambda_2 = \fh{12}{6-\lambda}, \lambda_3 = 28+\fh{12}{6-\lambda}, $ and we have
 \begin{equation}\label{det.l2.c}
 \begin{aligned}
 &\abs{L_2+(8-\lambda)I_{27}} \\
=& \begin{vmatrix}
\begin{smallmatrix}
      S_2 -10I_9 & B_2  & B_3  \\
          & B_2(S_2 -10I_9)^{-1}B_2+S_2     & B_2(S_2 -10I_9)^{-1}B_3-14A_1 \\
        &B_3(S_2 -10I_9)^{-1}B_2 +14A_1    &  B_3(S_2 -10I_9)^{-1}B_3+S_2
        \end{smallmatrix}
    \end{vmatrix}\\
= &\abs{ S_2 -10I_9 } \begin{vmatrix}
\begin{smallmatrix}
    (36-\lambda)I_6 -12(6-\lambda)^{-1}\beta\beta^t & & -28\sqrt{-1}K-12\sqrt{-1}(6-\lambda)^{-1}\beta \gamma^t &\\
    & (16-\lambda-\fh{24}{26-\lambda})I_3 & & \\
    28\sqrt{-1}K+12\sqrt{-1}(6-\lambda)^{-1} \gamma \beta^t & &(36-\lambda)I_6 -12(6-\lambda)^{-1}\gamma\gamma^t & \\
    & & & (16-\lambda-\fh{24}{26-\lambda})I_3
    \end{smallmatrix}
\end{vmatrix}\\
= & (26-\lambda)^6(6-\lambda)^3 (16-\lambda-\fh{24}{26-\lambda})^6\begin{vmatrix}
\begin{smallmatrix}
    \lambda_1I_3 & \lambda_2 J & - \sqrt{-1} \lambda_3I_3 & -\sqrt{-1}\lambda_2J \\ 
    \lambda_2J &  \lambda_1I_3 & \sqrt{-1}\lambda_2 J & \lambda_3\sqrt{-1}I_3 \\
    \sqrt{-1}\lambda_3I_3 & -\sqrt{-1}\lambda_2J &  \lambda_1I_3 & -\lambda_2 J \\
    \sqrt{-1}\lambda_2 J & -\lambda_3\sqrt{-1}I_3  &  -\lambda_2J &  \lambda_1I_3
    \end{smallmatrix}
\end{vmatrix}\\
= & (6- \lambda)^3\big((26-\lambda)(16-\lambda)-24\big)^6
\begin{vmatrix}
\begin{smallmatrix}
    \lambda_1I_3 &  - \sqrt{-1} \lambda_3I_3 & \lambda_2 J & -\sqrt{-1}\lambda_2J \\ 
  \sqrt{-1}\lambda_3I_3   &  \lambda_1I_3 & -\sqrt{-1}\lambda_2 J &  -\lambda_2 J \\
   \lambda_2J  & \sqrt{-1}\lambda_2J &  \lambda_1I_3 & \lambda_3\sqrt{-1}I_3 \\
    \sqrt{-1}\lambda_2 J &  -\lambda_2J & -\lambda_3\sqrt{-1}I_3  &  \lambda_1I_3
    \end{smallmatrix}
\end{vmatrix}  .
 \end{aligned}
 \end{equation}
Since $\left[
\begin{smallmatrix}
    \lambda_1I_3 &  - \sqrt{-1} \lambda_3I_3 \\
     \sqrt{-1}\lambda_3I_3   &  \lambda_1I_3
      \end{smallmatrix}
\right]
\left[
\begin{smallmatrix}
    \lambda_1I_3 & \lambda_3\sqrt{-1}I_3 \\
    -\lambda_3\sqrt{-1}I_3  &  \lambda_1I_3
     \end{smallmatrix}
\right] = (\lambda_1^2-\lambda_3^2)I_6 , \begin{vmatrix}
\begin{smallmatrix}
    \lambda_1I_3 &  - \sqrt{-1} \lambda_3I_3 \\
     \sqrt{-1}\lambda_3I_3   &  \lambda_1I_3
     \end{smallmatrix}
\end{vmatrix} = (\lambda_1^2 -\lambda_3^2)^3
$, and 
\begin{equation*}
\begin{aligned}
& \left[
\begin{smallmatrix}
     \lambda_2J  & \sqrt{-1}\lambda_2J \\
     \sqrt{-1}\lambda_2 J &  -\lambda_2J
     \end{smallmatrix}
\right]
\left[
\begin{smallmatrix}
    \lambda_1I_3 &  - \sqrt{-1} \lambda_3I_3 \\
     \sqrt{-1}\lambda_3I_3   &  \lambda_1I_3
      \end{smallmatrix}
\right]^{-1}
\left[
\begin{smallmatrix}
    \lambda_2 J & -\sqrt{-1}\lambda_2J\\
    -\sqrt{-1}\lambda_2 J &  -\lambda_2 J 
     \end{smallmatrix}
\right] \\ 
 = & \fh{\lambda_2^2}{\lambda_1^2-\lambda_3^2}\left[
 \begin{smallmatrix}
     J  & \sqrt{-1}J \\
     \sqrt{-1} J &  -J
      \end{smallmatrix}
\right]
\left[
\begin{smallmatrix}
    \lambda_1I_3 &   \sqrt{-1} \lambda_3I_3 \\
     -\sqrt{-1}\lambda_3I_3   &  \lambda_1I_3
      \end{smallmatrix}
\right]
\left[
\begin{smallmatrix}
     J & -\sqrt{-1}J\\
    -\sqrt{-1} J &  - J 
     \end{smallmatrix}
\right] \\
  = & \fh{\lambda_2^2}{\lambda_1^2-\lambda_3^2}\left[
  \begin{smallmatrix}
     J  & \sqrt{-1}J \\
     \sqrt{-1} J &  -J
      \end{smallmatrix}
\right] \left[
\begin{smallmatrix}
    (\lambda_1+\lambda_3)J & -\sqrt{-1}(\lambda_1+\lambda_3) J \\
    -\sqrt{-1}(\lambda_1+\lambda_3) J & - (\lambda_1+\lambda_3)J
     \end{smallmatrix}
    \right] \\
    = & \fh{2\lambda_2^2(\lambda_1+\lambda_3)}{\lambda_1^2-\lambda_3^2}\left[
    \begin{smallmatrix}
        I_3 & -\sqrt{1}I_3 \\
        \sqrt{_1}I_3  & I_3
         \end{smallmatrix}
    \right].
\end{aligned}
\end{equation*}
Then we have 
\begin{equation}\label{det.l2.d}
\begin{aligned}
& \begin{vmatrix}
\begin{smallmatrix}
    \lambda_1I_3 &  - \sqrt{-1} \lambda_3I_3 & \lambda_2 J & -\sqrt{-1}\lambda_2J \\ 
  \sqrt{-1}\lambda_3I_3   &  \lambda_1I_3 & -\sqrt{-1}\lambda_2 J &  -\lambda_2 J \\
   \lambda_2J  & \sqrt{-1}\lambda_2J &  \lambda_1I_3 & \lambda_3\sqrt{-1}I_3 \\
    \sqrt{-1}\lambda_2 J &  -\lambda_2J & -\lambda_3\sqrt{-1}I_3  &  \lambda_1I_3
    \end{smallmatrix}
\end{vmatrix} \\
= & (\lambda_1^2-\lambda_3^2)^3
\begin{vmatrix}
 \left[
    \begin{smallmatrix}
     \lambda_1I_3 & \lambda_3\sqrt{-1}I_3 \\
    -\lambda_3\sqrt{-1}I_3  &  \lambda_1I_3
    \end{smallmatrix}
   \right]
-\fh{2 \lambda_2^2}{\lambda_1-\lambda_3}
    \left[
    \begin{smallmatrix}
        I_3 & -\sqrt{1}I_3 \\
        \sqrt{_1}I_3  & I_3
        \end{smallmatrix}
    \right]
\end{vmatrix} \\
=  & (\lambda_1^2-\lambda_3^2)^3
\begin{vmatrix}
\begin{smallmatrix}
(\lambda_1-\fh{2 \lambda_2^2}{\lambda_1-\lambda_3})I_3 & \sqrt{-1}(\lambda_3+\fh{2 \lambda_2^2}{\lambda_1-\lambda_3})I_3 \\
-\sqrt{-1}(\lambda_3+\fh{2 \lambda_2^2}{\lambda_1-\lambda_3})I_3 & (\lambda_1-\fh{2 \lambda_2^2}{\lambda_1-\lambda_3})I_3
\end{smallmatrix}
\end{vmatrix}\\
=& (\lambda_1^2-\lambda_3^2)^3((\lambda_1-\fh{2 \lambda_2^2}{\lambda_1-\lambda_3})^2-(\lambda_3+\fh{2 \lambda_2^2}{\lambda_1-\lambda_3})^2)^3\\
 = & (\lambda_1-\lambda_3-\fh{4 \lambda_2^2}{\lambda_1-\lambda_3})^3(\lambda_1-\lambda_3)^3(\lambda_1+\lambda_3)^6 \\
 = & ((\lambda_1-\lambda_3)^2-4\lambda_2^2)^3(64-\lambda)^6 = (8-\lambda - \fh{48}{6-\lambda})^3(8-\lambda)^3(64-\lambda)^6.
\end{aligned}
\end{equation}
Combined \eqref{det.l2.c} and \eqref{det.l2.d}, we have 
\begin{align*}
 &\abs{L_2+(8-\lambda)I_{27}} \\=& (6- \lambda)^3\big((26-\lambda)(16-\lambda)-24\big)^6(8-\lambda - \fh{48}{6-\lambda})^3(8-\lambda)^3(64-\lambda)^6 \\
=& \big((26-\lambda)(16-\lambda)- 24\big)^6\big((6-\lambda)(8-\lambda)-48\big)^3(8-\lambda)^3(64-\lambda)^6 \\
=&(28-\lambda)^6(14-\lambda)^6(14-\lambda)^3\lambda^3(8-\lambda)^3(64-\lambda)^6 .
\end{align*}
Thus, we have
\begin{equation} \label{eq.matrix2}
    \det(\lambda I_{27} - L_2) =  (\lambda+8)^3\lambda^3(\lambda- 6)^9(\lambda-20)^6(\lambda-56)^6.
\end{equation}

\subsection{The signs of the matrix $L_3$}\label{app.l3}
We say that two matrices $A, K \in  \CC^{n\times n}$ are \emph{congruent} if there exists an invertible
matrix $D\in  \CC^{n\times n}$ such that $K = \bar D^tAD$, and we denote  $A \sim K.$
\emph{Sylvester’s Law of Inertia}
states that under a congruence transform, the signs of eigenvalues of a Hermitian
matrix do not change. So we can calculate the signs of $L_3$ to get $\dim \ker L_3$.
  
We have $L_3=\begin{bmatrix}
\begin{smallmatrix}
    T_2 -10I_{16} & \sqrt{6}A_2  & \sqrt{6}A_3  \\
     -\sqrt{6}A_2   & T_2& -14A_1 \\
     -\sqrt{6}A_3   &14A_1  &  T_2
\end{smallmatrix}
\end{bmatrix} ,$ and $L_3$ is defined in \eqref{matrix.rep.L_3}.  
Let \begin{gather}\label{det.l3.a}
B= \left[
\begin{smallmatrix}
     T_2 +6 A_2 (T_2 -10I_{16})^{-1}A_2& -14A_1 +6 A_2 (T_2 -10I_{16})^{-1}A_3 \\
       14A_1 + 6 A_3 (T_2 -10I_{16})^{-1}A_2 &  T_2 +6 A_3 (T_2 -10I_{16})^{-1}A_3
    \end{smallmatrix}
\right].
\end{gather}
Then we have \begin{gather}\label{det.l3.f}
L_3 \sim \begin{bmatrix}
\begin{smallmatrix}
     T_2 -10I_{16} &   &   \\
        & T_2 +6 A_2 (T_2 -10I_{16})^{-1}A_2& -14A_1 +6 A_2 (T_2 -10I_{16})^{-1}A_3 \\
       &14A_1 + 6 A_3 (T_2 -10I_{16})^{-1}A_2 &  T_2 +6 A_3 (T_2 -10I_{16})^{-1}A_3
    \end{smallmatrix}
\end{bmatrix} \sim 
\left[
\begin{smallmatrix}
    I_{16} & \\
    & B
\end{smallmatrix}
\right].\end{gather}
Moreover,
\begin{gather}\label{det.l3.b}
    \begin{aligned}
 \left[
 \begin{smallmatrix}
     6 A_2 (T_2 -10I_{16})^{-1}A_2&6 A_2 (T_2 -10I_{16})^{-1}A_3 \\
        6 A_3 (T_2 -10I_{16})^{-1}A_2 &  6 A_3 (T_2 -10I_{16})^{-1}A_3
     \end{smallmatrix}
 \right]
 = &
6 \begin{bmatrix}
\begin{smallmatrix}
  O_8 & J_1 \\
     -J_1 & K_1 \\
     O_8 & J_2 \\
     J_2 & K_2   
\end{smallmatrix}
 \end{bmatrix}
 \left[  \begin{smallmatrix}
     \fh{1}{50}I_8 & \\
     & \fh{1}{10}I_8
     \end{smallmatrix}
 \right]
 \left[ \begin{smallmatrix}
     O_8 & J_1 & O_8 & J_2 \\
     -J_1 & K_1& J_2 & K_2
     \end{smallmatrix}
 \right]\\
 =& 6\begin{bmatrix}
 \begin{smallmatrix}
     -3\lambda_2I_8 & \lambda_2J_1K_1 & \lambda_2 J_1J_2&  \lambda_2 J_1K_2 \\
     -\lambda_2 K_1J_1 & -(3\lambda_1+ 4 \lambda_2)I_8 & \lambda_2 K_1J_2 &
     -\lambda_1 J_1J_2 + \lambda_2K_1K_2 \\
     -\lambda_2 J_2J_1 & \lambda_2J_2K_1 &-3\lambda_2I_8 & \lambda_2J_2K_2 \\
     -\lambda_2K_2J_1 & \lambda_1J_2J_1 +\lambda_2 K_2K_1 & \lambda_2K_2J_2 & -(3\lambda_1+4\lambda_2)I_8
     \end{smallmatrix}
 \end{bmatrix} ,
 \end{aligned}
 \end{gather}
 where $\lambda_1 = \fh{1}{50}, \lambda_2= \fh{1}{10},$ and $J_1,K_1,J_2,K_2$ are defined  in \eqref{matrix.rep.L_3}. In particular, we have  
\begin{gather*}
 J_1^t = J_1,\  J_2^t = J_2,\  K_1^t = -K_1, \ K_2^t = K_2,\\
J_1^2 = 3I_8, \ K_1^2 = -4I_8,\  J_2^2 = -3I_8, \ K_2^2 = -4I_8,\ 
\\J_1J_2 = 3 \ii \left[
 \begin{smallmatrix}
     -I_4 & \\
     & I_4
 \end{smallmatrix}
 \right], \  K_1K_2 = 4 \ii
 \left[ \begin{smallmatrix}
     -I_4 & \\
     & I_4
 \end{smallmatrix}\right] ,\\
 J_1K_1 = -J_2 K_2 = J_3, \ K_1J_1 =K_2J_2= -J_3^t,\ \\
 J_1K_2 =J_2K_1 = \ii J_4, \  K_1 J_2 =K_2J_1 =  -\ii J_4^t,\\
 \text{ with }J_3 =2\sqrt{3}\begin{bmatrix}
 \begin{smallmatrix}
     &&&&&&&1\\
     &&&&&&-1& \\
     &&&&&1&& \\
     &&&&1&&& \\
     &&&1&&&& \\
     &&-1&&&&& \\
     &1&&&&&& \\
    1 &&&&&&& \\
      \end{smallmatrix}
    \end{bmatrix}
 , \ J_4 =2\sqrt{3}\begin{bmatrix}\begin{smallmatrix}
     &&&&&&&-1\\
     &&&&&&1& \\
     &&&&&-1&& \\
     &&&&-1&&& \\
     &&&1&&&& \\
     &&-1&&&&& \\
     &1&&&&&& \\
    1 &&&&&&& \\
      \end{smallmatrix}
 \end{bmatrix}.\end{gather*}
Let $ K_3 = \left[ \begin{smallmatrix}
     -I_4 & \\
     & I_4 
 \end{smallmatrix}\right],$  then $ K_3^2 =I_8. 
 $
 Moreover, $$ K_3J_3 = J_4 , \ 
 K_3 J_4 =J_3 ,\  J_3^t J_4 = J_4^tJ_3 =-12K_3, \   J_3^t J_3 = J_4^t J_4 =12I_8. $$
 Combined \eqref{det.l3.a} and \eqref{det.l3.b}, we have  \begin{gather}\label{det.l3.c}
 \begin{aligned}
B=& \begin{bmatrix}
 \begin{smallmatrix}
    (60 -18\lambda_2)I_8 & 6\lambda_2J_3 & (42+18\lambda_2)\ii K_3&  6 \ii\lambda_2 J_4 \\
     6\lambda_2 J_3^t & (20-6(3\lambda_1+ 4 \lambda_2))I_8 & -6\ii\lambda_2 J_4^t &
   ( 14+6 (4\lambda_2-3\lambda_1))\ii K_3\\
     -(42+18\lambda_2)\ii K_3 & 6\ii\lambda_2 J_4 &
     (60-18\lambda_2)I_8 & -6\lambda_2 J_3 \\
     -6 \ii\lambda_2 J_4^t & -( 14+6 (4\lambda_2-3\lambda_1))\ii K_3 & -6\lambda_2J_3^t & (20 -6(3\lambda_1+4\lambda_2))I_8
     \end{smallmatrix}
 \end{bmatrix}\\
 \sim &
 \begin{bmatrix}
 \begin{smallmatrix}
    (60 -18\lambda_2)I_8 & (42+18\lambda_2)\ii K_3& 6\lambda_2J_3 &  6 \ii\lambda_2 J_4 \\
     -(42+18\lambda_2)\ii K_3    & (60-18\lambda_2)I_8& 6\ii\lambda_2 J_4 & -6\lambda_2 J_3\\
     6\lambda_2 J_3^t & -6\ii\lambda_2 J_4^t    &
     (20-6(3\lambda_1+ 4 \lambda_2))I_8 &   ( 14+6 (4\lambda_2-3\lambda_1))\ii K_3\\
     -6 \ii\lambda_2 J_4^t &  -6\lambda_2J_3^t &
     -( 14+6 (4\lambda_2-3\lambda_1))\ii K_3 &(20 -6(3\lambda_1+4\lambda_2))I_8
     \end{smallmatrix}
 \end{bmatrix} = \left[\begin{smallmatrix}
    D_1 & D_2\\
    \bar{D_2}^t & D_3
\end{smallmatrix}\right],
\end{aligned}
\end{gather}
where $D_1,D_2,D_3\in \CC^{16\times 16}.$
Since $(60 - 18\lambda_2)^2 - (42+18\lambda_2)^2 = (60 - 1.8)^2 - (42+ 1.8)^2 >0,$
we have  $D_1 \sim I_{16}. $ Let $\lambda_3 = (60-18\lambda_2) = 58.2, \lambda_4 = (42+ 18\lambda_2)\ii = 43.8\ii.$ Then $D_1^{-1} = (\lambda_3^2+ \lambda_4^2)^{-1}\bar{D_1}.$ So we have
 \begin{gather}\label{det.l3.d}
\begin{aligned}
-\bar{D_2}^tD_1^{-1}D_2 = &- \fh{36\lambda_2^2}{\lambda_3^2+\lambda_4^2}
\left[\begin{smallmatrix}
    J_3^t & -\ii J_4^t \\
    -\ii J_4^t & -J_3^t
\end{smallmatrix}\right] 
\left[\begin{smallmatrix}
    \lambda_3I_8 & -\lambda_4K_3 \\
    \lambda_4K_3 & \lambda_3I_8
\end{smallmatrix}\right] 
\left[\begin{smallmatrix}
    J_3 & \ii J_4 \\
    \ii J_4& -J_3
\end{smallmatrix}\right]\\
 =& - \fh{36\lambda_2^2}{\lambda_3^2+\lambda_4^2}
\left[\begin{smallmatrix}
    J_3^t & -\ii J_4^t \\
    -\ii J_4^t & -J_3^t
\end{smallmatrix}\right] 
\left[\begin{smallmatrix}
    (\lambda_3-\lambda_4 \ii)J_3 & \ii(\lambda_3-\lambda_4 \ii) J_4 \\
  \ii(\lambda_3-\lambda_4 \ii) J_4  & -(\lambda_3-\lambda_4 \ii)J_3
\end{smallmatrix}\right]\\
= &- \fh{36\lambda_2^2 \times 24}{\lambda_3+\lambda_4 \ii} \left[\begin{smallmatrix}
     I_8 & -\ii K_3\\
     \ii K_3 & I_8
\end{smallmatrix}\right] = -\fh{\fh{36}{100}\times 24}{58.2- 43.8}\left[\begin{smallmatrix}
     I_8 & -\ii K_3\\
     \ii K_3 & I_8
\end{smallmatrix}\right]\\
=& -\fh{3}{5}\left[\begin{smallmatrix}
     I_8 & -\ii K_3\\
     \ii K_3 & I_8
\end{smallmatrix}\right].
    \end{aligned}\end{gather}
Combined \eqref{det.l3.c} and \eqref{det.l3.d}, we have 
\begin{gather}\label{det.l3.e}
B \sim \left[\begin{smallmatrix}
    D_1 & \\
    & D_3 -\bar{D_2}^tD_1^{-1}D_2 
\end{smallmatrix}\right]
\sim \left[\begin{smallmatrix}
    I_{16} & \\
    & D_3 -\bar{D_2}^tD_1^{-1}D_2 
\end{smallmatrix}\right], 
\end{gather}
with $
D_3 = \left[\begin{smallmatrix}
    (17.24)I_8 & (16.04) \ii K_3 \\
    -(16.04) \ii K_3 &  (17.24)I_8
\end{smallmatrix}\right], \
D_3 -\bar{D_2}^tD_1^{-1}D_2 = 16.64 \left[\begin{smallmatrix}
    I_8 & \ii K_3 \\
    -\ii K_3 & I_8
\end{smallmatrix}\right] \sim \left[\begin{smallmatrix}
    I_8 & \\
    & O_8
\end{smallmatrix}\right].$
 Thus, combined \eqref{det.l3.f} and \eqref{det.l3.e}, we have 
 \begin{equation}\label{matrix3}
     L_3 \sim \left[  \begin{smallmatrix}
     I_{40} & \\
     & O_8
 \end{smallmatrix}
 \right]. 
 \end{equation}

\section{Code for calculations about matrices} \label{code}
 In this section, we provide a MATLAB code in order to verify calculations about matrices. We perform \emph{symbolic computations} in MATLAB to obtain accurate results. For $k = 1,2,3,4,$ we can get the eigenvalues of corresponding matrices $L_k$ from the results obtained by running the MATLAB code. 
 
 In MATLAB, 
 \begin{itemize}
     \item "eye(n)" is $I_n$ for $n\in \ZZ^+$;
     \item "zeros(n)" is $O_n$ for $n\in \ZZ^+$;
     \item "zeros(m,n)" is a matrix in $\CC^{m\times n}$ whose all entries are $0$ for $m,n\in \ZZ^+$;
     \item 
     "flip(A)" is $[e_{ij}](1\leq i,j \leq n)$ with $e_{ij} = a_{(n+1-j)(n+1-i)},$ for a matrix $A = [a_{ij}]\in \CC^n$ with $n\in \ZZ^+$;
      \item "sqrt(x)" is $\sqrt{x}$ for $x\geq 0$;
      \item "1i"  is $\ii$;
      \item "[1,0,0,0;0,1,0,0;0,0,-1,0;0,0,0,-1]" is the matrix $\left[\begin{smallmatrix}
          1& 0 &0 &0\\
          0&1&0&0\\
          0&0&-1&0\\
          0&0&0&-1
      \end{smallmatrix}\right]$;
      \item "subs(L)" is a symbolic substitution function in MATLAB. It substitutes the symbolic variables in the symbolic expression L with their assigned values in the workspace, and evaluates the resulting expression symbolically;
      \item “disp(X)” is a standard display function in MATLAB that outputs the content of the expression X directly to the command window;
      \item  "simplify()" function in MATLAB performs algebraic simplification on symbolic expressions, reducing them to a simpler and more concise form through algebraic and trigonometric transformations;
      \item "solve(g, lambda)" is a symbolic function in MATLAB. It solves the symbolic equation g analytically for the specified variable lambda, and returns the exact analytical solution;
      \item "S$'$ " is the transpose of the column vector $S$ and "S$'$ " is a row vector.
 \end{itemize}
   The MATLAB code is listed below.

\begin{lstlisting}[language=Matlab]
%--begining of the code
%k = 1-----------------%
clc
clear
%-- definitions of matrix
syms I_4 A_1 A_2 A_3  ; syms q_1 lambda ;
q_1 = sqrt(6); I_4 = eye(4) ; 
A_1 = 1i*[1,0,0,0;0,1,0,0;0,0,-1,0;0,0,0,-1];
A_2= [0,0,0,1;0,0,-1,0;0,1,0,0;-1,0,0,0];
A_3= 1i*[0,0,0,-1;0,0,1,0;0,1,0,0;-1,0,0,0]; 
L= [-2*I_4, q_1*A_2, q_1*A_3;-q_1*A_2, 8*I_4 , -14 *A_1;-q_1*A_3 , 14* A_1 ,8*I_4];
M=subs(L); g = simplify(det(lambda*eye(12) - M));
disp('If k = 1, the characteristic polynomial for the matrix L is:');
disp(g);
S= solve(g,lambda);
disp('The eigenvalues of L are:'); disp(S');

%k = 2------------------%

syms I_3  O_3 ; syms A_1 A_2 A_3  T_2 ;syms q_1 q_2 lambda;syms beta gamma 
q_2 = sqrt(2); q_1 = sqrt(6);  I_3 = eye(3); O_3 = zeros(3);
A_1= [2*1i*I_3 , O_3, O_3; O_3, -2*1i*I_3,O_3; O_3, O_3 ,O_3 ];
beta = [0,1,0;-1,0,0;0,0,-1;0,0,1;-1,0,0;0,-1,0];
gamma = [0,-1,0;1,0,0;0,0,1;0,0,1;-1,0,0;0,-1,0];
A_2 = [zeros(6), q_2* beta ;-q_2*beta' , zeros(3)];
A_3 =[zeros(6), 1i*q_2*gamma; 1i*q_2*gamma', zeros(3)];
T_2 = [28*I_3, O_3,O_3;O_3, 28*I_3, O_3; O_3,O_3,8*I_3];
L=[T_2 - 10*eye(9), q_1*A_2, q_1*A_3;-q_1*A_2, T_2, -14*A_1; -q_1*A_3 ,14*A_1, T_2];
M=subs(L); g =simplify(det(lambda*eye(27) - M));
disp('If k = 2, the characteristic polynomail for the matrix L is');
disp(g);
S= solve(g,lambda);
disp('The eigenvalues of L are'); disp(S');

%k = 3-------------------%

syms I_4  O_4 ; syms I_16 ; syms J_1 K_1 J_2 K_2  O_8 I_8 N_1 ;
syms A_1 A_2 A_3  T_2 I_16 ; syms q_1 q_2 lambda; syms I_48 
q_2 = sqrt(3); q_1 = sqrt(6); I_4 = eye(4); I_8 = eye(8);
I_16 = eye(16); I_48 = eye(48); O_8 = zeros(8); O_4 =zeros(4);
N_1 = 1i*[I_4,O_4;O_4,-I_4];
A_1= [3*N_1 ,O_8;O_8, N_1];
J_1 =q_2* diag([1,-1,-1,-1,1,-1,-1,-1]);
K_1 = 2*flip(diag([-1,-1,1,1,-1,-1,1,1]));
A_2=[O_8,J_1;-J_1,K_1];
J_2= q_2*1i*diag([-1,1,1,1,1,-1,-1,-1]);
K_2 = 2*1i*flip(diag([-1,-1,1,1,1,1,-1,-1]));
A_3 = [O_8, J_2;J_2,K_2];
T_2 = [60*I_8, O_8;O_8, 20*I_8];
L=[T_2 - 10*I_16, q_1*A_2, q_1*A_3; -q_1*A_2, T_2, -14*A_1; -q_1*A_3 ,14*A_1, T_2];
M=subs(L); g = simplify(det(lambda*I_48 - M));
disp('If k = 3, the characteristic polynomail for the matrix L is');
disp(g);
S= solve(g,lambda);
disp('The eigenvalues of L are'); disp(S');

%k = 4-------------------%

syms I_5  O_5  J_1 ; syms I_75 ; syms O_10  I_10 ;
syms A_1 A_2 A_3  T_2 I_25 ; syms q_1 lambda ;
q_1 = sqrt(6); I_5 = eye(5); I_10 = eye(10);
I_25 = eye(25); I_75 = eye(75); O_5 = zeros(5); O_10 = zeros(10);
A_1 = [[4*1i*I_5,O_5;O_5,-4*1i* I_5],zeros(10,15) ; 
    zeros(15,10),[[2*1i*I_5, O_5;O_5,-2*1i*I_5],zeros(10,5);zeros(5,10),O_5]];
J_1 =flip(diag([1,-1,1,-1,1]));
A_2=[O_10,-2*I_10,zeros(10,5);2*I_10,O_10,[-q_1*I_5;-q_1*J_1];
    zeros(5,10),[q_1*I_5,q_1*J_1],O_5];
A_3=[O_10,[2*1i*I_5,O_5;O_5,-2*1i*I_5],zeros(10,5);
    [2*1i*I_5,O_5;O_5,-2*1i*I_5],O_10,[q_1*1i*I_5;-q_1*1i*J_1];
    zeros(5,10),[q_1*1i*I_5,-q_1*1i*J_1],O_5];
T_2 = [104*I_10, zeros(10,15); 
    O_10,[44*I_10,zeros(10,5)];zeros(5,10),[zeros(5,10) ,24*I_5]];
L=[T_2 - 10*I_25, q_1*A_2, q_1*A_3; -q_1*A_2, T_2, -14*A_1; -q_1*A_3 ,14*A_1, T_2];
M=subs(L); g =simplify(det(lambda*I_75 - M));
disp('If k = 4, the characteristic polynomail for the matrix L is');
disp(g);
S= solve(g,lambda);
disp('The eigenvalues of L are'); disp(S');
\end{lstlisting}

We execute the MATLAB code and present the corresponding results below.
{\fontsize{10}{13}
\begin{verbatim}
If k = 1, the characteristic polynomial for the matrix L is:
lambda^4*(- lambda^2 + 14*lambda + 176)^4
 
The eigenvalues of L are:
[-8, -8, -8, -8, 0, 0, 0, 0, 22, 22, 22, 22]
 
If k = 2, the characteristic polynomail for the matrix L is
-lambda^3*(lambda + 8)^2*(lambda - 20)^3*(lambda^2 - 26*lambda + 120)^3*(lambda^2 
-62*lambda + 336)^5*(- lambda^3 + 54*lambda^2 + 160*lambda - 2688)
 
The eigenvalues of L are
[-8, -8, -8, 0, 0, 0, 6, 6, 6, 6, 6, 6, 6, 6, 6, 
20, 20, 20, 20, 20, 20, 56, 56, 56, 56, 56, 56]
 
If k = 3, the characteristic polynomail for the matrix L is
lambda^8*(lambda^5 - 220*lambda^4 + 16820*lambda^3 
- 566720*lambda^2 + 8472000*lambda - 44808192)^8
 
The eigenvalues of L are
[0, 0, 0, 0, 0, 0, 0, 0, 12, 12, 12, 12, 12, 12, 12, 12, 
22, 22, 22, 22, 22, 22, 22, 22, 32, 32, 32, 32, 32, 32, 32, 32, 
52, 52, 52, 52, 52, 52, 52, 52, 102, 102, 102, 102, 102, 102, 102, 102]
 
If k = 4, the characteristic polynomail for the matrix L is
(lambda^2 - 166*lambda + 6720)^10*(lambda^3 - 46*lambda^2 + 560*lambda 
- 1280)^5*(lambda^4 - 266*lambda^3 + 20440*lambda^2 - 591360*lambda + 5529600)^10
 
The eigenvalues of L are
[16, 16, 16, 16, 16, 36, 36, 36, 36, 36, 36, 36, 36, 36, 36,
70, 70, 70, 70, 70, 70, 70, 70, 70, 70, 96, 96, 96, 96, 96, 96, 96, 96, 96, 96,
160, 160, 160, 160, 160, 160, 160, 160, 160, 160, 
15 - 145^(1/2), 15 - 145^(1/2), 15 - 145^(1/2), 15 - 145^(1/2), 15 - 145^(1/2),
35 - 265^(1/2), 35 - 265^(1/2), 35 - 265^(1/2), 35 - 265^(1/2), 35 - 265^(1/2), 
35 - 265^(1/2), 35 - 265^(1/2), 35 - 265^(1/2), 35 - 265^(1/2), 35 - 265^(1/2),
145^(1/2) + 15, 145^(1/2) + 15, 145^(1/2) + 15, 145^(1/2) + 15, 145^(1/2) + 15, 
265^(1/2) + 35, 265^(1/2) + 35, 265^(1/2) + 35, 265^(1/2) + 35, 265^(1/2) + 35, 
265^(1/2) + 35, 265^(1/2) + 35, 265^(1/2) + 35, 265^(1/2) + 35, 265^(1/2) + 35]
  \end{verbatim}
}

\ 

\bigskip

 {\footnotesize

\ 
 
\noindent{Acknowledgment: The authors would like to thank Yuanlong Xin and Zhihan Wang for helpful comments on an earlier draft. The first author is partially supported by NSFC 12371053 and NSFC 12526203.} 

\medskip

\ 

\noindent\textbf{Data availability:} No datasets were generated or analysed during the current study.
 
}

\bigskip

\noindent\textbf{\large Declarations} 

 \
 
 {\footnotesize

\noindent\textbf{Conflict of interest:} The authors declare that they have no conflict of interest.
\medskip

\medskip

 }

\bibliographystyle{plain}
\bibliography{ref}

@article {Lawson,
    AUTHOR = {Lawson, Jr., H. B. and Osserman, R.},
     TITLE = {Non-existence, non-uniqueness and irregularity of solutions to
              the minimal surface system},
   JOURNAL = {Acta Math.},
  FJOURNAL = {Acta Mathematica},
    VOLUME = {139},
      YEAR = {1977},
    NUMBER = {1-2},
     PAGES = {1--17},
      ISSN = {0001-5962,1871-2509},
   MRCLASS = {35J60 (53A10 58E12)},
  MRNUMBER = {452745},
       DOI = {10.1007/BF02392232},
       URL = {https://doi.org/10.1007/BF02392232},
}

@article {Tanno,
    AUTHOR = {Tanno, Sh\^ukichi},
     TITLE = {The first eigenvalue of the {L}aplacian on spheres},
   JOURNAL = {Tohoku Math. J. (2)},
  FJOURNAL = {The Tohoku Mathematical Journal. Second Series},
    VOLUME = {31},
      YEAR = {1979},
    NUMBER = {2},
     PAGES = {179--185},
      ISSN = {0040-8735,2186-585X},
   MRCLASS = {58G25 (53C20)},
  MRNUMBER = {538918},
MRREVIEWER = {R.\ S.\ Millman},
       DOI = {10.2748/tmj/1178229837},
       URL = {https://doi.org/10.2748/tmj/1178229837},
}

@article {Simons,
    AUTHOR = {Simons, James},
     TITLE = {Minimal varieties in riemannian manifolds},
   JOURNAL = {Ann. of Math. (2)},
  FJOURNAL = {Annals of Mathematics. Second Series},
    VOLUME = {88},
      YEAR = {1968},
     PAGES = {62--105},
      ISSN = {0003-486X},
   MRCLASS = {53.04 (35.00)},
  MRNUMBER = {233295},
MRREVIEWER = {W.\ F.\ Pohl},
       DOI = {10.2307/1970556},
       URL = {https://doi.org/10.2307/1970556},
}

@article {Ding_Yuan,
    AUTHOR = {Ding, Weiyue and Yuan, Yu},
     TITLE = {Resolving the singularities of the minimal {H}opf cones},
   JOURNAL = {J. Partial Differential Equations},
  FJOURNAL = {Journal of Partial Differential Equations},
    VOLUME = {19},
      YEAR = {2006},
    NUMBER = {3},
     PAGES = {218--231},
      ISSN = {1000-940X,2079-732X},
   MRCLASS = {49Q05 (53A10)},
  MRNUMBER = {2252978},
MRREVIEWER = {Marc\ Michel\ Soret},
}

@article {Lotay,
    AUTHOR = {Lotay, Jason D.},
     TITLE = {Stability of coassociative conical singularities},
   JOURNAL = {Comm. Anal. Geom.},
  FJOURNAL = {Communications in Analysis and Geometry},
    VOLUME = {20},
      YEAR = {2012},
    NUMBER = {4},
     PAGES = {803--867},
      ISSN = {1019-8385,1944-9992},
   MRCLASS = {53C38},
  MRNUMBER = {2981841},
MRREVIEWER = {Andreas\ Savas-Halilaj},
       DOI = {10.4310/CAG.2012.v20.n4.a5},
       URL = {https://doi.org/10.4310/CAG.2012.v20.n4.a5},
}

@book {Xin,
    AUTHOR = {Xin, Yuanlong},
     TITLE = {Minimal submanifolds and related topics},
    SERIES = {Nankai Tracts in Mathematics},
    VOLUME = {16},
   EDITION = {Second},
 PUBLISHER = {World Scientific Publishing Co. Pte. Ltd., Hackensack, NJ},
      YEAR = {2019},
     PAGES = {xvi+380},
      ISBN = {978-981-3236-05-9},
   MRCLASS = {53C42 (53-02 53A10 53C40)},
  MRNUMBER = {3837570},
}

@book {GMT,
    AUTHOR = {Simon, Leon},
     TITLE = {Lectures on geometric measure theory},
    SERIES = {Proceedings of the Centre for Mathematical Analysis,
              Australian National University},
    VOLUME = {3},
 PUBLISHER = {Australian National University, Centre for Mathematical
              Analysis, Canberra},
      YEAR = {1983},
     PAGES = {vii+272},
      ISBN = {0-86784-429-9},
   MRCLASS = {49-01 (28A75 49F20)},
  MRNUMBER = {756417},
MRREVIEWER = {J.\ S.\ Joel},
}

@book {Giaquita,
    AUTHOR = {Giaquinta, Mariano and Martinazzi, Luca},
     TITLE = {An introduction to the regularity theory for elliptic systems,
              harmonic maps and minimal graphs},
    SERIES = {Appunti. Scuola Normale Superiore di Pisa (Nuova Serie)
              [Lecture Notes. Scuola Normale Superiore di Pisa (New
              Series)]},
    VOLUME = {11},
   EDITION = {Second},
 PUBLISHER = {Edizioni della Normale, Pisa},
      YEAR = {2012},
     PAGES = {xiv+366},
      ISBN = {978-88-7642-442-7; 978-88-7642-443-4},
   MRCLASS = {35-02 (35B65 35J20 35J60 58E20)},
  MRNUMBER = {3099262},
       DOI = {10.1007/978-88-7642-443-4},
       URL = {https://doi.org/10.1007/978-88-7642-443-4},
}

@book {Morrey,
    AUTHOR = {Morrey, Jr., Charles B.},
     TITLE = {Multiple integrals in the calculus of variations},
    SERIES = {Classics in Mathematics},
      NOTE = {Reprint of the 1966 edition [MR0202511]},
 PUBLISHER = {Springer-Verlag, Berlin},
      YEAR = {2008},
     PAGES = {x+506},
      ISBN = {978-3-540-69915-6},
   MRCLASS = {49-02 (49J10 49J45)},
  MRNUMBER = {2492985},
       DOI = {10.1007/978-3-540-69952-1},
       URL = {https://doi.org/10.1007/978-3-540-69952-1},
}

@article {Allard1,
    AUTHOR = {Allard, William K.},
     TITLE = {On the first variation of a varifold},
   JOURNAL = {Ann. of Math. (2)},
  FJOURNAL = {Annals of Mathematics. Second Series},
    VOLUME = {95},
      YEAR = {1972},
     PAGES = {417--491},
      ISSN = {0003-486X},
   MRCLASS = {49F20},
  MRNUMBER = {307015},
MRREVIEWER = {M.\ Klingmann},
       DOI = {10.2307/1970868},
       URL = {https://doi.org/10.2307/1970868},
}

@article {allard.radial,
    AUTHOR = {Allard, William K. and Almgren, Jr., Frederick J.},
     TITLE = {On the radial behavior of minimal surfaces and the uniqueness
              of their tangent cones},
   JOURNAL = {Ann. of Math. (2)},
  FJOURNAL = {Annals of Mathematics. Second Series},
    VOLUME = {113},
      YEAR = {1981},
    NUMBER = {2},
     PAGES = {215--265},
      ISSN = {0003-486X},
   MRCLASS = {49F22 (53A10 58E12)},
  MRNUMBER = {607893},
MRREVIEWER = {E.\ Giusti},
       DOI = {10.2307/2006984},
       URL = {https://doi.org/10.2307/2006984},
}

@book {Axler,
    AUTHOR = {Axler, Sheldon and Bourdon, Paul and Ramey, Wade},
     TITLE = {Harmonic function theory},
    SERIES = {Graduate Texts in Mathematics},
    VOLUME = {137},
   EDITION = {Second},
 PUBLISHER = {Springer-Verlag, New York},
      YEAR = {2001},
     PAGES = {xii+259},
      ISBN = {0-387-95218-7},
   MRCLASS = {31-01 (30-01 46Exx)},
  MRNUMBER = {1805196},
       DOI = {10.1007/978-1-4757-8137-3},
       URL = {https://doi.org/10.1007/978-1-4757-8137-3},
}

@article {asymptotics,
    AUTHOR = {Simon, Leon},
     TITLE = {Asymptotics for a class of nonlinear evolution equations, with
              applications to geometric problems},
   JOURNAL = {Ann. of Math. (2)},
  FJOURNAL = {Annals of Mathematics. Second Series},
    VOLUME = {118},
      YEAR = {1983},
    NUMBER = {3},
     PAGES = {525--571},
      ISSN = {0003-486X,1939-8980},
   MRCLASS = {58G11 (35B40 49F99 58E20)},
  MRNUMBER = {727703},
MRREVIEWER = {Helmut\ Kaul},
       DOI = {10.2307/2006981},
       URL = {https://doi.org/10.2307/2006981},
}

@article{simonsolo,
  title={Minimal hypersurfaces asymptotic to quadratic cones in  $\mathbf{R}^{n+ 1}$},
  author={Simon, Leon and Solomon, Bruce},
  journal={Inventiones mathematicae},
  volume={86},
  number={3},
  pages={535--551},
  year={1986},
  publisher={Springer}
}

@incollection {isolated.singularity,
    AUTHOR = {Simon, Leon},
     TITLE = {Isolated singularities of extrema of geometric variational
              problems},
 BOOKTITLE = {Harmonic mappings and minimal immersions ({M}ontecatini,
              1984)},
    SERIES = {Lecture Notes in Math.},
    VOLUME = {1161},
     PAGES = {206--277},
 PUBLISHER = {Springer, Berlin},
      YEAR = {1985},
      ISBN = {3-540-16040-X},
   MRCLASS = {58E15 (58E20)},
  MRNUMBER = {821971},
MRREVIEWER = {Harold\ Parks},
       DOI = {10.1007/BFb0075139},
       URL = {https://doi.org/10.1007/BFb0075139},
}

@article{Simonuniqueness,
  title={Uniqueness of some cylindrical tangent cones},
  author={Simon, Leon},
  journal={Communications in Analysis and Geometry},
  volume={2},
  number={1},
  pages={1--33},
  year={1994},
  publisher={International Press of Boston}
}

@article{Sim94,
author = {Leon Simon},
title = {{Cylindrical tangent cones and the singular set of minimal submanifolds}},
volume = {38},
journal = {Journal of Differential Geometry},
number = {3},
publisher = {Lehigh University},
pages = {585 -- 652},
year = {1993},
doi = {10.4310/jdg/1214454484},
URL = {https://doi.org/10.4310/jdg/1214454484}
}

@article {Collins_Li,
    AUTHOR = {Collins, Tristan C. and Li, Yang},
     TITLE = {Uniqueness of some cylindrical tangent cones to special
              {L}agrangians},
   JOURNAL = {Geom. Funct. Anal.},
  FJOURNAL = {Geometric and Functional Analysis},
    VOLUME = {33},
      YEAR = {2023},
    NUMBER = {2},
     PAGES = {376--420},
      ISSN = {1016-443X,1420-8970},
   MRCLASS = {32S25 (32U40 53D12)},
  MRNUMBER = {4578462},
MRREVIEWER = {L\"uping\ Chen},
       DOI = {10.1007/s00039-023-00634-x},
       URL = {https://doi.org/10.1007/s00039-023-00634-x},
}

@article {Harvey_Lawson,
    AUTHOR = {Harvey, Reese and Lawson, Jr., H. Blaine},
     TITLE = {Calibrated geometries},
   JOURNAL = {Acta Math.},
  FJOURNAL = {Acta Mathematica},
    VOLUME = {148},
      YEAR = {1982},
     PAGES = {47--157},
      ISSN = {0001-5962,1871-2509},
   MRCLASS = {53C40 (49F20 53C65 58E15 58G30)},
  MRNUMBER = {666108},
       DOI = {10.1007/BF02392726},
       URL = {https://doi.org/10.1007/BF02392726},
}

@article{edelenluca,
  title={Regularity of minimal surfaces near quadratic cones},
  author={Edelen, Nick and Spolaor, Luca},
  journal={Annals of Mathematics},
  volume={198},
  number={3},
  pages={1013--1046},
  year={2023},
  publisher={Department of Mathematics, Princeton University Princeton, New Jersey, USA}
}

@article {Mazet1,
    AUTHOR = {Mazet, Laurent},
     TITLE = {Minimal hypersurfaces asymptotic to {S}imons cones},
   JOURNAL = {J. Inst. Math. Jussieu},
  FJOURNAL = {Journal of the Institute of Mathematics of Jussieu. JIMJ.
              Journal de l'Institut de Math\'ematiques de Jussieu},
    VOLUME = {16},
      YEAR = {2017},
    NUMBER = {1},
     PAGES = {39--58},
      ISSN = {1474-7480,1475-3030},
   MRCLASS = {53A10},
  MRNUMBER = {3591961},
MRREVIEWER = {Jianquan\ Ge},
       DOI = {10.1017/S1474748015000110},
       URL = {https://doi.org/10.1017/S1474748015000110},
}

@article {Sogge,
    AUTHOR = {Sogge, Christopher D.},
     TITLE = {Oscillatory integrals and spherical harmonics},
   JOURNAL = {Duke Math. J.},
  FJOURNAL = {Duke Mathematical Journal},
    VOLUME = {53},
      YEAR = {1986},
    NUMBER = {1},
     PAGES = {43--65},
      ISSN = {0012-7094,1547-7398},
   MRCLASS = {42B10 (42B15)},
  MRNUMBER = {835795},
MRREVIEWER = {Douglas\ Kurtz},
       DOI = {10.1215/S0012-7094-86-05303-2},
       URL = {https://doi.org/10.1215/S0012-7094-86-05303-2},
}

@article{szekelyhidi2020,
  title={Uniqueness of certain cylindrical tangent cones},
  author={Sz{\'e}kelyhidi, G{\'a}bor},
  journal={\url{https://arxiv.org/abs/2012.02065}},
  year={2020}
}

@article {firester2025,
    author={Firester, Benjy and Tsiamis, Raphael and Wang, Yipeng},
    TITLE = {Uniqueness of {C}ylindrical {T}angent {C}ones {$C_{p,q} \times
              \mathbb {R}$}},
    JOURNAL = {Calc. Var. Partial Differential Equations},
  FJOURNAL = {Calculus of Variations and Partial Differential Equations},
    VOLUME = {65},
      YEAR = {2026},
    NUMBER = {5},
    PAGES = {Paper No. 165, 12},
       ISSN = {0944-2669,1432-0835},
   MRCLASS = {53A10 (53A07 53C42)},
       DOI = {10.1007/s00526-025-03238-5},
       URL = { https://doi.org/10.1007/s00526-025-03238-5},
}

@book {bookurbano,
    AUTHOR = {Urbano, Francisco},
     TITLE = {Morse index of minimal submanifolds},
    SERIES = {Mathematical Surveys and Monographs},
    VOLUME = {292},
 PUBLISHER = {American Mathematical Society, Providence, RI},
      YEAR = {[2025] \copyright 2025},
     PAGES = {xiv+293},
      ISBN = {978-1-4704-8021-9},
   MRCLASS = {53A10 (49Q05 53-02 53C42 58E12)},
  MRNUMBER = {4965467},
}

@article {Urbanoclli,
    AUTHOR = {Urbano, Francisco},
     TITLE = {Minimal surfaces with low index in the three-dimensional
              sphere},
   JOURNAL = {Proc. Amer. Math. Soc.},
  FJOURNAL = {Proceedings of the American Mathematical Society},
    VOLUME = {108},
      YEAR = {1990},
    NUMBER = {4},
     PAGES = {989--992},
      ISSN = {0002-9939,1088-6826},
   MRCLASS = {53C40},
  MRNUMBER = {1007516},
MRREVIEWER = {Steen\ Markvorsen},
       DOI = {10.2307/2047957},
       URL = {https://doi.org/10.2307/2047957},
}

@article {MarquesNeves,
    AUTHOR = {Marques, Fernando C. and Neves, Andr\'e},
     TITLE = {Min-max theory and the {W}illmore conjecture},
   JOURNAL = {Ann. of Math. (2)},
  FJOURNAL = {Annals of Mathematics. Second Series},
    VOLUME = {179},
      YEAR = {2014},
    NUMBER = {2},
     PAGES = {683--782},
      ISSN = {0003-486X,1939-8980},
   MRCLASS = {53C42 (49Q20)},
  MRNUMBER = {3152944},
MRREVIEWER = {Andrea\ Mondino},
       DOI = {10.4007/annals.2014.179.2.6},
       URL = {https://doi.org/10.4007/annals.2014.179.2.6},
}

@article {Colding-Minicozzi,
    AUTHOR = {Holck Colding, Tobias and Minicozzi, II, William P.},
     TITLE = {Regularity of elliptic and parabolic systems},
   JOURNAL = {Ann. Sci. \'Ec. Norm. Sup\'er. (4)},
  FJOURNAL = {Annales Scientifiques de l'\'Ecole Normale Sup\'erieure.
              Quatri\`eme S\'erie},
    VOLUME = {56},
      YEAR = {2023},
    NUMBER = {6},
     PAGES = {1883--1921},
      ISSN = {0012-9593,1873-2151},
   MRCLASS = {53E10},
  MRNUMBER = {4706576},
MRREVIEWER = {Glen\ E.\ Wheeler},
}

@article {E-L-S,
    AUTHOR = {Evans, Christopher G. and Lotay, Jason D. and Schulze, Felix},
     TITLE = {Remarks on the self-shrinking {C}lifford torus},
   JOURNAL = {J. Reine Angew. Math.},
  FJOURNAL = {Journal f\"ur die Reine und Angewandte Mathematik. [Crelle's
              Journal]},
    VOLUME = {765},
      YEAR = {2020},
     PAGES = {139--170},
      ISSN = {0075-4102,1435-5345},
   MRCLASS = {53E10 (53D12)},
  MRNUMBER = {4129358},
MRREVIEWER = {Xianfeng\ Wang},
       DOI = {10.1515/crelle-2019-0015},
       URL = {https://doi.org/10.1515/crelle-2019-0015},
}

@article {Schulze,
    AUTHOR = {Schulze, Felix},
     TITLE = {Uniqueness of compact tangent flows in mean curvature flow},
   JOURNAL = {J. Reine Angew. Math.},
  FJOURNAL = {Journal f\"ur die Reine und Angewandte Mathematik. [Crelle's
              Journal]},
    VOLUME = {690},
      YEAR = {2014},
     PAGES = {163--172},
      ISSN = {0075-4102,1435-5345},
   MRCLASS = {53C44},
  MRNUMBER = {3200339},
MRREVIEWER = {Xusheng\ Liu},
       DOI = {10.1515/crelle-2012-0070},
       URL = {https://doi.org/10.1515/crelle-2012-0070},
}

@article {Sun-Zhu,
    AUTHOR = {Sun, Ao and Zhu, Jonathan J.},
     TITLE = {Rigidity and \L ojasiewicz inequalities for {C}lifford
              self-shrinkers},
   JOURNAL = {Calc. Var. Partial Differential Equations},
  FJOURNAL = {Calculus of Variations and Partial Differential Equations},
    VOLUME = {64},
      YEAR = {2025},
    NUMBER = {5},
     PAGES = {Paper No. 163, 30},
      ISSN = {0944-2669,1432-0835},
   MRCLASS = {53A10 (35A23)},
  MRNUMBER = {4912445},
MRREVIEWER = {Annalisa\ Cesaroni},
       DOI = {10.1007/s00526-025-03037-y},
       URL = {https://doi.org/10.1007/s00526-025-03037-y},
}

@incollection {Yau,
    AUTHOR = {Yau, Shing-Tung},
     TITLE = {Open problems in geometry},
 BOOKTITLE = {Differential geometry: partial differential equations on
              manifolds ({L}os {A}ngeles, {CA}, 1990)},
    SERIES = {Proc. Sympos. Pure Math.},
    VOLUME = {54, Part 1},
     PAGES = {1--28},
 PUBLISHER = {Amer. Math. Soc., Providence, RI},
      YEAR = {1993},
      ISBN = {0-8218-1494-X},
   MRCLASS = {53-02},
  MRNUMBER = {1216573},
       DOI = {10.1090/pspum/054.1/1216573},
       URL = {https://doi.org/10.1090/pspum/054.1/1216573},
}

@article {H-J-W,
    AUTHOR = {Hildebrandt, S. and Jost, J. and Widman, K.-O.},
     TITLE = {Harmonic mappings and minimal submanifolds},
   JOURNAL = {Invent. Math.},
  FJOURNAL = {Inventiones Mathematicae},
    VOLUME = {62},
      YEAR = {1980/81},
    NUMBER = {2},
     PAGES = {269--298},
      ISSN = {0020-9910,1432-1297},
   MRCLASS = {58E20 (53C40)},
  MRNUMBER = {595589},
MRREVIEWER = {Samuel\ I.\ Goldberg},
       DOI = {10.1007/BF01389161},
       URL = {https://doi.org/10.1007/BF01389161},
}

@article {Jost-Xin,
    AUTHOR = {Jost, J. and Xin, Y. L.},
     TITLE = {Bernstein type theorems for higher codimension},
   JOURNAL = {Calc. Var. Partial Differential Equations},
  FJOURNAL = {Calculus of Variations and Partial Differential Equations},
    VOLUME = {9},
      YEAR = {1999},
    NUMBER = {4},
     PAGES = {277--296},
      ISSN = {0944-2669,1432-0835},
   MRCLASS = {53A10 (49Q05)},
  MRNUMBER = {1731468},
MRREVIEWER = {Harold\ Parks},
       DOI = {10.1007/s005260050142},
       URL = {https://doi.org/10.1007/s005260050142},
}

@article {Jost-Xin-Yang,
    AUTHOR = {Jost, J. and Xin, Y. L. and Yang, Ling},
     TITLE = {A spherical {B}ernstein theorem for minimal submanifolds of
              higher codimension},
   JOURNAL = {Calc. Var. Partial Differential Equations},
  FJOURNAL = {Calculus of Variations and Partial Differential Equations},
    VOLUME = {57},
      YEAR = {2018},
    NUMBER = {6},
     PAGES = {Paper No. 166, 21},
      ISSN = {0944-2669,1432-0835},
   MRCLASS = {58E20 (53A10)},
  MRNUMBER = {3863563},
MRREVIEWER = {M\'arcio\ Silva\ Santos},
       DOI = {10.1007/s00526-018-1439-2},
       URL = {https://doi.org/10.1007/s00526-018-1439-2},
}

@article {Ding-Jost-Xin,
    AUTHOR = {Ding, Qi and Jost, J. and Xin, Y. L.},
     TITLE = {Minimal graphs of arbitrary codimension in {E}uclidean space
              with bounded 2-dilation},
   JOURNAL = {Math. Ann.},
  FJOURNAL = {Mathematische Annalen},
    VOLUME = {390},
      YEAR = {2024},
    NUMBER = {3},
     PAGES = {3355--3418},
      ISSN = {0025-5831,1432-1807},
   MRCLASS = {53A07 (49Q15)},
  MRNUMBER = {4803455},
MRREVIEWER = {Serena\ Dipierro},
       DOI = {10.1007/s00208-024-02830-y},
       URL = {https://doi.org/10.1007/s00208-024-02830-y},
}

@book {Hall,
    AUTHOR = {Hall, Brian},
     TITLE = {Lie groups, {L}ie algebras, and representations},
    SERIES = {Graduate Texts in Mathematics},
    VOLUME = {222},
   EDITION = {Second},
      NOTE = {An elementary introduction},
 PUBLISHER = {Springer, Cham},
      YEAR = {2015},
     PAGES = {xiv+449},
      ISBN = {978-3-319-13466-6; 978-3-319-13467-3},
   MRCLASS = {22-01 (17-01)},
  MRNUMBER = {3331229},
       DOI = {10.1007/978-3-319-13467-3},
       URL = {https://doi.org/10.1007/978-3-319-13467-3},
}

\end{document}